\documentclass{amsart}
\usepackage{amssymb,amsmath, amsthm,latexsym}
\usepackage{graphics}
\usepackage{psfrag}
\usepackage{amscd}
\usepackage{graphicx}
\newcommand{\cal}[1]{\mathcal{#1}}
\theoremstyle{plain}
\newtheorem{theo}{Theorem}

\newtheorem{lemma}{Lemma}[section]
\newtheorem{theorem}[lemma]{Theorem}
\newtheorem{proposition}[lemma]{Proposition}
\newtheorem{corollary}[lemma]{Corollary}
 \theoremstyle{definition}

\newtheorem{definition}[lemma]{Definition}
\newtheorem{remark}[lemma]{Remark}
\let\egthree=\phi
\let\phi=\varphi
\let\varphi=\egthree

%\let\pabel=\label

%\renewcommand{\label}[1]{\pabel{#1}{\tt #1}}

%\makeindex

\begin{document}
\title{Incompressible surfaces in rank one locally symmetric spaces}
\author{Ursula Hamenst\"adt}
\thanks{Partially supported by 
ERC Grant 10160104\\
AMS subject classification:53C35,53B21,37A25}
\date{November 28, 2014}

%\maketitle

\begin{abstract} We show that cocompact lattices in rank one simple Lie
groups of non-compact type distinct from $SO(2m,1)$ $(m\geq 1)$ 
contain surface subgroups.
\end{abstract}

\maketitle

\section{Introduction}

In recent seminal work, Jeremy Kahn and Vladimir Markovic 
showed that every cocompact lattice in $SO(3,1)$ contains
infinitely many surface subgroups, i.e. subgroups which are
isomorphic to the fundamental group of  a closed surface
\cite{KM12}.
Moreover, as a subgroup of $SO(3,1)$, each of these
groups is quasi-Fuchsian, and every round circle in 
the ideal boundary of hyperbolic three-space
is the limit in the Hausdorff topology of a sequence of 
limit sets of such groups.

In view of a conjecture of Gromov that every one-ended
hyperbolic group contains a surface subgroup, it seems
desirable to extend the result of Kahn and Markovic
to a larger class of groups. The goal of this paper is to
undertake such an extension to cocompact lattices in 
simple Lie groups of rank one which are distinct from 
$SO(2m,1)$ for $m\geq 1$.

\begin{theo}\label{mainresult}
Let $G$ 
be a simple rank one Lie group of non-compact type 
distinct from $SO(2m,1)$ for some $m\geq 1$ 
and let
$\Gamma<G$ be a cocompact lattice. Then $\Gamma$ contains
surface subgroups.
\end{theo}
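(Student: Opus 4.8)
The plan is to follow the ``good pants'' strategy of Kahn and Markovic \cite{KM12}, carried out in the rank one symmetric space $X=G/K$ (real, complex or quaternionic hyperbolic space, or the Cayley hyperbolic plane) and its closed quotient $M=\Gamma\backslash X$; the outcome will be a closed surface $S$ with $\chi(S)<0$ together with an immersion $f\colon S\to M$ whose $\pi_1$-image is the desired surface subgroup. Fix a large length scale $R$ and a small parameter $\epsilon>0$, both to be chosen at the very end. The building blocks are the following. Call a closed geodesic in $M$ a \emph{good curve} if it has length $2R+O(\epsilon/R)$ and its holonomy, conjugated into $AM\subset G$, has $M$-component (its ``rotational part'') within $\epsilon/R$ of the identity; call an immersed pair of pants in $M$ a \emph{good pants} if all three of its cuffs are good curves and it lies $(\epsilon/R)$-close to an immersed totally geodesic real hyperbolic plane $\mathbb{H}^2_{\mathbb R}\subset X$ spanned by the cuff axes. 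Good pants exist in profusion and can be counted: in the frame bundle $\Gamma\backslash G$ a good pants is essentially an almost-closed orbit segment of a subgroup locally isomorphic to $\mathrm{SL}(2,\mathbb R)$ whose orbit is a totally geodesic $\mathbb{H}^2_{\mathbb R}$, and one produces such segments and estimates their number by combining a quantitative closing/shadowing argument with exponential mixing of the frame flow on $\Gamma\backslash G$, which is available for every $G$ in the statement.

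Second, and this is the analytic core, I would prove an equidistribution statement for the ``feet'' of good pants. To a good pants $P$ with a distinguished cuff $\gamma$ attach its foot: the point of the unit normal bundle $N^1\gamma$ — an $S^{d-2}$-bundle over the circle $\gamma$, $d=\dim_{\mathbb R}X$ — recording where the seam orthogeodesic of $P$ meets $\gamma$ together with the normal direction in which $P$ leaves $\gamma$. Using exponential mixing of the frame flow on $\Gamma\backslash G$ one shows that for every good curve $\gamma$ the feet of all good pants having $\gamma$ as a cuff are $\delta$-equidistributed in $N^1\gamma$ with respect to the natural Liouville-type measure, once $R=R(\delta)$ is large enough. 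This is the exact analogue of the Kahn--Markovic equidistribution of feet, and it is precisely here that one needs an \emph{effective} mixing rate rather than mere ergodicity.

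Third, I would assemble the pants into a surface, and I expect this to be the main difficulty. There is a natural fixed-point-free \emph{gluing involution} on feet: two good pants glue along $\gamma$ to a piece of surface that is still $(\epsilon/R)$-nearly totally geodesic exactly when their feet are related by the map that shifts by $\ell(\gamma)/2$ along $\gamma$ and acts on the normal sphere by a rotation $\epsilon/R$-close to the antipodal map — the controlled deviation from the antipodal map being the universal ``bend'' inserted so that the turning number of the assembled surface along each cuff equals $1$ up to $\epsilon$. Since the feet are $\delta$-equidistributed and the involution preserves the measure, the combinatorial matching argument of Kahn and Markovic (in its rational, slightly unbalanced refinement) produces finitely many isometric copies of good pants that glue in pairs along all of their cuffs, giving a closed surface $S$ with $\chi(S)<0$ and an immersion $f\colon S\to M$. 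The hypothesis $G\neq \mathrm{SO}(2m,1)$ enters exactly here: one must realize the ``bend'' as a small rotation of the unit normal sphere of a good geodesic tending toward the antipodal map, consistently along the whole curve and compatibly with the global matching, and the antipodal map of that sphere is orientation-preserving — hence so deformable — precisely when the sphere is odd-dimensional, i.e. for odd-dimensional real hyperbolic space; in the complex, quaternionic and octonionic cases the ambient $J$- (respectively $I,J,K$-, respectively octonionic) structure lets one replace the antipodal map by a rotation inside a complex line, which is always available, whereas for even-dimensional real hyperbolic space the antipodal map is orientation-reversing and this parity obstruction cannot be circumvented. Identifying and evading this obstruction is the new point compared to the $\mathbb{H}^3$ case.

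Finally I would verify incompressibility. Lift $f$ to $\tilde f\colon\widetilde S\to X$. Because $S$ is assembled from $(\epsilon/R)$-nearly totally geodesic pants glued with bending $O(1/R)$, the path metric on $\widetilde S$ is $(1+C\epsilon)$-bi-Lipschitz to that of $\mathbb{H}^2_{\mathbb R}$ at the scale of a pants, and the local-to-global (Morse) lemma in the pinched negatively curved Hadamard manifold $X$ upgrades this to a quasi-isometric embedding $\widetilde S\hookrightarrow X$. Hence $f_*\colon\pi_1S\to\Gamma$ is injective, its image is word-hyperbolic and quasi-isometric to $\mathbb{H}^2$, and, being the fundamental group of a closed surface of negative Euler characteristic, it is a surface subgroup of $\Gamma$. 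This completes the proof.
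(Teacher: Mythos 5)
Your proposal follows the same overall Kahn--Markovic architecture that the paper uses: build $(R,\delta)$-good pants via exponential mixing of the frame flow on $\Gamma\backslash G$, attach to each pants a ``foot'' on (a sphere bundle over) each cuff, prove near-equidistribution/near-symmetry of these feet, and match pants along cuffs with a near-antipodal, unit-shift gluing. You also correctly identify the obstruction at $G=SO(2m,1)$: the relevant normal sphere is $S^{m-1}$ with $m-1$ even exactly in that case, so the antipodal map fails to be connected to the identity. The paper phrases the same point as $-\mathrm{Id}$ lying (or not) in the identity component of the centralizer of a generic monodromy (Proposition \ref{matching}), which is equivalent; its Lemma \ref{rotation} and Lemma \ref{rotation4} are the precise symmetry statements that replace your informal ``$\delta$-equidistribution of feet.''

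Where your sketch genuinely diverges from the paper is the incompressibility step, and this is the part the author singles out as the main novelty. You propose the classical Kahn--Markovic route: show the glued surface is $(1+C\epsilon)$-bi-Lipschitz to $\mathbb{H}^2$ at the pants scale and invoke a Morse-type local-to-global lemma in the pinched Hadamard manifold. The paper avoids producing an explicit hyperbolic-plane comparison model: it realizes each skew pants as a piecewise ruled surface built from ``twisted ruled bands'' and center triangles, shows the assembled surface is intrinsically locally $\mathrm{CAT}(-1/2)$ (Lemma \ref{localcat}), controls extrinsic distances along ``admissible chains'' of twisted bands (Lemma \ref{anglecontrol}, Corollary \ref{anglecontrol2}), and then applies a short properness criterion for the equivariant lift (Proposition \ref{cannonmap}). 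Both end in a quasi-geodesic verification, but the paper's route is where the higher-dimensional technical work actually lives; in your version, the phrase ``bi-Lipschitz at the scale of a pants'' hides precisely the estimates the twisted-band machinery is built to supply, so you would still have to develop an equivalent quantitative argument in the non-constant-curvature and higher-dimensional cases rather than borrow the $\mathbb{H}^3$ statement.
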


Since by Selberg's lemma 
every finitely generated subgroup of a linear group 
contains
a torsion-free subgroup of finite index
(see \cite{Ra94} for a proof), 
the theorem is an immediate
consequence of the statement that 
every closed locally symmetric
manifold $M$ of negative curvature 
which is different from a hyperbolic manifold of even dimension
contains closed incompressible immersed
surfaces.

The proof of this fact 
uses the strategy developed by Kahn and
Markovic. Namely, the surfaces will be glued from
immersed incompressible pants with geodesic boundary.
These pants are viewed as topological objects, but they
have geometric realizations as piecewise ruled surfaces which 
are close to totally geodesic pants.
Closeness in this sense can be quantified, and 
this allows to establish a glueing condition for such
pants which results in an incompressible surface.

This differential geometric viewpoint is the main novelty
of this work. It also results in a significant 
simplification of the original construction 
for closed hyperbolic three-manifolds. 

The glueing condition can be expressed as a system
of linear glueing equations, and the task is then
to find a non-negative solution. For manifolds of higher
dimension, this turns out to be much more difficult than
for hyperbolic three-manifolds, and this is also
the place where our argument is not valid for 
even dimensional hyperbolic manifolds. 
We refer to Section 7 for a more detailed discussion of this
difficulty.

We do not formulate 
an approximation result along the lines
of the work of Kahn and Markovic since we are not aware of any
interesting application, although 
suitable versions of such an
approximation result hold as well.
%
%However, such an approximation result can
%easily be deduced from our main result. 

The paper is completely self-contained. 
Some of our arguments are valid
for arbitrary closed negatively curved manifolds 
with generic metrics, but
we do not know whether the beautiful 
result of Kahn and Markovic holds true in this setting.

In Section \ref{cannon} we collect some background and
tools used later on, and we give a more
detailed overview of the argument.
%This section also provided a more detailed 
%summary of the existing results in the literature
%related to the main result.

%
%Following Kahn and Markovic, we begin with constructing 
%incompressible pants which have
%cuffs of large but roughly fixed length. This
%is done in Section \ref{constructing}, and it can be 
%carried out in a much more general setting.
%In Section \ref{twisted}, we construct geometric models
%for the thin parts of these pants and establish 
%the necessary geometric control needed in the 
%proof of incompressibility. The geometric properties 
%of these models reflect   
%properties of locally symmetric metrics
%and are not valid for more general geometries.
%This glueing equation is solved in
%Section \ref{theglueing} for closed locally symmetric manifolds
%which are different from even dimensional hyperbolic manifolds.

\bigskip
{\bf Acknowledgement:} The major part of 
this work was carried out while I visited
the IPAM in Los Angeles. I am grateful to IPAM for the
hospitality, and to Vladimir Markovic for
useful discussions. I am also grateful for the anonymous
referees for pointing out an erraneous statement in an earlier
version of this work, for informing me about the
papers \cite{B12,S12} and for valuable
comments geared at improving the exposition.

\section{Setup and tools}\label{cannon}

This section has three parts. 
In the first part we give a short outline of the proof,
enhancing its similiarities and differences to the 
argument of Kahn and Markovic. 
The second part summarizes 
those properties of rank one symmetric spaces
which are used later on. In the third part we
give a simple (and well known to the experts but hard to find
 in the literature) 
criterion for incompressibility of a surface in 
a closed nonpositively curved manifold.

\subsection{Structure of the proof and outline of the paper}

To prove Theorem \ref{mainresult} from the introduction, we
construct closed surfaces $S$ with 
piecewise smooth locally ${\rm CAT}(-1/2)$-metrics and 
piecewise smooth isometric immersions 
$f:S\to M$. The surfaces are constructed in such a way that
they admit a distinguished 
\emph{pants decomposition} ${\cal P}$ 
which is mapped by $f$ to a collection of closed geodesics in $M$.
Such a pants decomposition consists of a collection of 
pairwise disjoint simple closed geodesics which decompose
$S$ into \emph{pairs of pants}, i.e. 
three-holed spheres. The image under $f$ of 
each pair of pants is a 
piecewise smooth isometrically 
immersed surface in $M$ which is geometrically 
close to a totally geodesic totally real hyperbolic subsurface.

To show that these surfaces are incompressible we associate
to each component of the thin part of such a pair of pants 
an immersed totally geodesic totally real hyperbolic plane in $M$.
We then lift these immersed hyperbolic
planes to the universal covering $\tilde M$ of $M$ 
and use these chains of planes to establish a sufficient
condition for incompressibility which relies on the simple
characterization formulated in Proposition \ref{cannonmap}.
The construction is carried out in Sections 5 and 6, and 
it is valid for symmetric spaces of higher rank as well, 
where we work with totally geodesic hyperbolic
planes of constant curvature $-1$ (which therefore intersect
a given maximal flat in at most one geodesic).

The construction of the basic building blocks
of the surfaces, the pairs of pants, is carried out in 
Section 4 and uses 
the strategy from \cite{KM12}. 
In our approach, it is essential to equip these pairs of 
pants with a geometric structure and to 
keep track of the
monodromy defined by parallel transport of frames 
along boundary geodesics.

Glueing the pants to closed surfaces which satisfy the
condition for incompressibility established in Section 6 
amounts to solving a glueing equation. For this we use
ideas from \cite{KM12}: project the Lebesgue measure
on suitable frame bundles to the sphere bundles over
closed geodesics and use these projected
measures to attach pants along their boundary geodesics
in such a way that the glueing equation is fulfilled.

The projected measures are not so easy to control. 
If the monodromy of a closed geodesic 
has a single fixed real line (which may happen in 
even dimensional hyperbolic manifolds) 
then it is not clear whether
the projected measure is 
symmetric enough to construct 
a solution of the glueing equation, and this difficulty 
is the reason why our proof of Theorem \ref{mainresult} fails
for even dimensional real hyperbolic manifolds. 
We refer to the short section \ref{even} for a
more detailed discussion of this difficulty.

%n joint work with Jeremy Kahn \cite{HK15}, we propose to 
%bypass this problem by using a method for solving the glueing
%equation closer to the work \cite{KM11}. This argument only depends on 
%controlled negative curvature and exponential mixing of the frame
%flow (rather than polynomial mixing as used in this paper) 
%and completely
%bypasses the need for a connnected subgroup of the isometry group
%which contains $-{\rm Id}$ and 
 %centralizes the monodromy transformation

%The argument is divided into 5 different steps. 
%The main part (which is described
%above) is contained in Section 5-7. 
Section 3 contains some distance estimates in  
universal coverings of 
surfaces with ${\rm CAT}(-1)$-metric which are equipped with 
a pants decomposition with specific properties. 
The surfaces we construct will be equipped with pants
decompositions with precisely these properties. 

%The basic underlying
%idea is due to Kahn and Markovic, but the strategy
%to show incompressibility 
%in \cite{KM12} differs from ours, and the information
%we need can not be found in \cite{KM12}. 

The results in this work are formulated so that
they can easily be used in more general situations than the one
we consider here. 

For a nice exposition of the work of Kahn and Markovic we 
refer the reader to \cite{B12}. An alternative approach
to incompressibility of immersed 
surfaces in hyperbolic 3-manifolds can be found
in \cite{S12}.

\subsection{The geometry of rank one symmetric spaces}

A rank-one symmetric space is a simply connected
Riemannian manifold $\tilde M$ of negative sectional curvature
whose group of orientation preserving isometries
is one of the simple Lie groups
$G=SO(n,1),SU(n,1),Sp(n,1),F_4^{-20}$.
More precisely, $\tilde M=G/K$ where $K<G$ is a maximal
compact subgroup and the Riemannian metric on 
$\tilde M$ is induced from the Killing form. We always assume that
the maximum of the sectional curvature of 
$\tilde M$ equals $-1$.
The geometry of these symmetric spaces can
be described as follows. 

The symmetric space $SO(n,1)/SO(n)$ is the 
\emph{hyperbolic
$n$-space} ${\bf H}^n$ of constant curvature $-1$. For any given point 
$p\in {\bf H}^n$, the stabilizer of $p$ in $SO(n,1)$ acts transitively
on the unit sphere $T_p^1{\bf H}^n$ in the tangent space of 
${\bf H}^n$ at $p$. This action is just the standard action of 
$SO(n)$ on $S^{n-1}$. 
The stabilizer in $SO(n,1)$ of a unit
tangent vector $X\in T^1{\bf H}^n$ equals the subgroup 
$SO(n-1)$ of $SO(n)$. It acts simply transitively on the
space of oriented 
orthonormal frames in the orthogonal complement of $X$.
For each $k\geq 1$, the group $SO(n)$ also acts transitively
on the Grassmannian of $k$-dimensional linear subspaces of 
$T_p{\bf H}^n$. Each such subspace is the tangent space at $p$
of a unique totally geodesic submanifold of ${\bf H}^n$ which is isometric
to a hyperbolic space of dimension $k$
(for $k=1$, this is just a geodesic). 

The symmetric space $SU(n,1)/S(U(n)U(1))$ is 
\emph{complex hyperbolic $n$-space}
$\mathbb{C}{\bf H}^n$ which is Hermitean
symmetric. Denote by $J$ the complex structure, viewed as
an automorphism of the tangent bundle of $\mathbb{C}{\bf H}^n$. 
For any given point $p\in \mathbb{C}{\bf H}^n$, 
the stabilizer of $p$ in $SU(n,1)$ acts transitively on the unit
sphere $T_p^1\mathbb{C}{\bf H}^n$ in the tangent space of 
$\mathbb{C}{\bf H}^n$ at $p$. This action is just the standard action
of $S(U(n)U(1))$ 
on the unit sphere in $\mathbb{C}^n$. The stabilizer in 
$S(U(n)U(1))$ of a unit tangent vector $X\in T^1\mathbb{C}{\bf H}^n$ 
equals the group $SU(n-1)$ which acts 
simply transitively 
on the space of unitary frames in the orthogonal complement of 
the span of $X$ and $JX$. 

Each real line in the tangent
space at $p$ is tangent to a unique complex one-dimensional
submanifold of $\mathbb{C}{\bf H}^n$ which is 
a totally geodesic embedded hyperbolic plane of constant curvature
$-4$. A tangent plane spanned by a vector $0\not=X$ and a vector $Y$ which
is orthogonal to the span of $X$ and $JX$ is \emph{totally real}
and tangent to a unique totally geodesic embedded hyperbolic
plane of constant curvature $-1$. The set of all totally real
planes containing the fixed vector $X$ can naturally be 
identified with a sphere of 
dimension $2n-3$. Parallel transport along a geodesic $\gamma$ 
commutes with the complex structure and hence it preserves the
sphere bundle over $\gamma$ corresponding to unit tangents 
which span together with $\gamma^\prime$ a
totally real plane. 
We refer to the monograph \cite{Go99} for more
information on complex hyperbolic space.

The symmetric space $Sp(n,1)/Sp(n)Sp(1)$ is 
\emph{quaternionic hyperbolic $n$-space} $\mathbb{H}{\bf H}^n$.
There is a two-sphere of complex structures on 
$\mathbb{H}{\bf H}^n$. Each real line in the 
tangent space at any point $p$ is contained in a unique
quaternionic line, and this line is tangent to a unique
totally geodesic embedded real hyperbolic $4$-space of 
constant curvature $-4$. 
A two-plane spanned by a vector $X$ and
a vector which is orthogonal to the quaternionic line determined 
by $X$ is tangent to a unique totally geodesic 
hyperbolic plane of constant curvature $-1$. As before, we call such a
plane \emph{totally real}. The set of all totally real planes through
$X$ can naturally be identified with a sphere of dimension $4n-5$.
The stabilizer of $X$ in $Sp(n,1)$ is the group $Sp(n-1)$. 
It acts simply transitively on the space
of orthonormal $\mathbb{K}$-frames in the orthogonal complement of
$X$. 
Parallel transport along a geodesic $\gamma$ commutes with 
the quaternionic structure and hence it preserves the sphere
bundle over $\gamma$ corresponding to unit vectors which 
span together with $\gamma^\prime$ a totally real plane.

The \emph{Cayley plane} 
${\rm Ca}{\bf H}^2=F_4^{-20}/{\rm Spin}(9)$ has a similiar 
geometric description. 
The isotropy representation of the group ${\rm Spin}(9)$ is the 
spin representation. The action of ${\rm Spin}(9)$ on the
unit sphere 
$T_p^1{\rm Ca}{\bf H}^2=S^{15}$ in the tangent space at the point $p$ is 
transitive. The stabilizer in ${\rm Spin}(9)$ 
of a unit vector $X$ is 
the subgroup ${\rm Spin}(7)$ which acts transitively on the
unit sphere $S^7$ in the orthogonal complement of
the span of $X$ over the Cayley numbers.   
The unit tangent vector $X$ determines a unique totally geodesic embedded
hyperbolic $8$-space of constant curvature $-4$. 
A unit tangent vector orthogonal to the tangent space of this hyperbolic
space spans with $X$ the tangent space 
of a 
totally real hyperbolic plane of constant curvature 
$-1$. The monograph \cite{W11} (see in particular Section 8.2)
contains all information summarized above, and we refer to it for
a more detailed discussion.

Write $\tilde M=G/K$ and
let $T^1\tilde M$ be the unit tangent bundle of $\tilde M$.
For a unit tangent vector $v\in T^1\tilde M$ with foot-point $p$  
let 
\[v_{\mathbb{K}}^\perp\] be the 
$\mathbb{K}$-orthogonal complement of $v$ in 
$T_p\tilde M$. Here we put $\mathbb{K}=\mathbb{R}$ if 
$G=SO(n,1)$, $\mathbb{K}=\mathbb{C}$ if $G=SU(n,1)$,
$\mathbb{K}=\mathbb{H}$ if $G=Sp(n,1)$, and $\mathbb{K}=
\mathbb{O}$ if $G=F_4^{-20}$.
A unit tangent vector $w\in T^1_p\tilde M$ orthogonal to $v$ is contained in  
$v_{\mathbb{K}}^\perp$ if and only if 
the plane in $T_p\tilde M$ spanned by $v$ and $w$ is \emph{real}, i.e. its
curvature
equals $-1$.

%The \emph{$\mathbb{K}$-linear span} of
%a linear subspace $L$ of a tangent space in $M$
%or $\tilde M$ is defined to be the smallest $\mathbb{K}$-linear
%subspace of this tangent space containing $L$.

The orientation of $\tilde M$ induces an orientation on 
$v_{\mathbb{K}}^\perp$. In the case $G=SO(n,1)$ this orientation
is determined by the requirement that for every
positive basis $X_2,\dots,X_n$ of $v_{\mathbb{K}}^\perp$ the
basis $v,X_2,\dots,X_n$ of $T_pM$ is positive.
If $G=SU(n,1)$ (or $G=Sp(n,1),F_4^{-20}$) 
we use the fact that the complex structure
(or the quaternionic structure or the Cayley structure)  
defines an orientation on the $\mathbb{K}$-lines
in $T\tilde M$.

Define the \emph{$\mathbb{K}$-frame bundle} 
\[P:{\cal F}\to T^1\tilde M\] 
over $T^1\tilde M$ to be the bundle whose
fibre over a unit tangent vector $v\in T^1\tilde M$ 
is the space of all positive orthonormal $\mathbb{K}$-frames in 
$v_{\mathbb{K}}^\perp$.
If $G=SO(n,1)$ then ${\cal F}$ 
is an $SO(n-1)$-principal  
bundle over $T^1M$. 
In the case $G=SU(n,1)$ it is 
an $SU(n-1)$-principal bundle, if 
$G=Sp(n,1)$ then it is an $Sp(n-1)$-principal bundle,
and in the case $G=F_4^{-20}$ it is a 
${\rm Spin}(7)$-principal bundle.

We need some information on parallel transport of 
$\mathbb{K}$-orthonormal frames 
along loops in totally geodesic hyperbolic 
planes $H\subset \tilde M$ of curvature $-1$, i.e.
totally real planes.
Recall that parallel transport along a loop 
$\gamma:[0,1]\to \tilde M$ is a $\mathbb{K}$-linear
isometry of $T_{\gamma(0)}\tilde M$.

For a linear subspace  $L\subset T\tilde M$ 
denote by $L\otimes \mathbb{K}$ the 
$\mathbb{K}$-linear subspace of $T\tilde M$ spanned by $L$.

\begin{lemma}\label{paralleltransport}
Let $\gamma:[0,1]\to H\subset \tilde M$ be a smooth
loop. Parallel transport along $\gamma$ induces the
identity on the orthogonal complement of 
$T_{\gamma(0)}H\otimes \mathbb{K}$.
\end{lemma}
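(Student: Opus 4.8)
The plan is to reduce the statement to a curvature computation on the totally geodesic hyperbolic plane $H$, using the fact that parallel transport along loops in a space of constant curvature $-1$ is controlled by the holonomy of the Levi-Civita connection, together with the $\mathbb{K}$-invariance of that connection. First I would observe that since $\tilde M$ is a symmetric space, its Levi-Civita connection commutes with the parallel structure tensors defining $\mathbb{K}$ (the complex structure $J$ in the $SU(n,1)$ case, the quaternionic structure in the $Sp(n,1)$ case, the Cayley structure in the $F_4^{-20}$ case; in the $SO(n,1)$ case there is nothing to prove beyond the real statement). Consequently, for any loop $\gamma$ the parallel transport map $\Pi_\gamma\colon T_{\gamma(0)}\tilde M\to T_{\gamma(0)}\tilde M$ is $\mathbb{K}$-linear, and the decomposition $T_{\gamma(0)}\tilde M = (T_{\gamma(0)}H\otimes\mathbb{K})\oplus (T_{\gamma(0)}H\otimes\mathbb{K})^\perp$ is $\Pi_\gamma$-invariant; so it suffices to understand $\Pi_\gamma$ restricted to the orthogonal complement of $T_{\gamma(0)}H\otimes\mathbb{K}$.

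Next I would use the structure of totally geodesic submanifolds in rank one symmetric spaces: $H$ sits inside the totally geodesic $\mathbb{K}$-hyperbolic space $H_\mathbb{K}$ (of constant curvature $-4$) tangent to $T_\gamma H\otimes\mathbb{K}$, and the normal bundle of $H_\mathbb{K}$ in $\tilde M$ along $\gamma$ is a flat bundle for the normal connection. The key point is that for a totally geodesic submanifold, the curvature of the normal connection is governed by the ambient curvature tensor evaluated on pairs of vectors tangent to the submanifold and pairs normal to it; for these rank one spaces, with $\gamma$ lying in the totally real plane $H$ and $w$ in the orthogonal complement of $T_\gamma H\otimes\mathbb{K}$, the relevant sectional curvatures vanish in exactly the way that forces $R(\dot\gamma,\cdot)$ to annihilate the normal directions, so that parallel transport along $\gamma$ acts trivially there. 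Concretely, I would write $\Pi_\gamma = \Pi_{\gamma(1-\cdot)}^{-1}\circ(\text{transport along a filling disc})$ and use that $H$ is two-dimensional and totally geodesic of curvature $-1$ so any loop in $H$ bounds a disc in $H$; then apply the Ambrose--Singer description of holonomy, reducing everything to checking that $R(X,Y)w = 0$ whenever $X,Y\in TH$ and $w\perp TH\otimes\mathbb{K}$.

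The main obstacle is precisely this last curvature identity: one must verify, case by case using the explicit form of the curvature tensor of each rank one symmetric space (as recorded in the references \cite{Go99}, \cite{W11} and the structure theory summarized above), that $R(X,Y)w$ lies in — indeed is zero in — the normal space, and moreover that the holonomy it generates is trivial rather than merely block-diagonal. The cleanest route is to note that $w$ spans with $X$ a totally real plane, hence $R(X,w)X$ is a multiple of $w$ with the same coefficient as in constant curvature $-1$, and to combine this with the $\mathbb{K}$-invariance to pin down $R(X,Y)w$ exactly; the flatness of the normal connection of $H$ (equivalently, that $H$ is a totally geodesic surface whose normal holonomy is trivial because its tangent planes are all totally real) then yields that parallel transport is the identity on $(T_{\gamma(0)}H\otimes\mathbb{K})^\perp$. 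I expect the $SO(n,1)$ case to be immediate, the $SU(n,1)$ case to be the model computation, and the $Sp(n,1)$ and Cayley cases to follow by the same argument once the structure tensors are in place.
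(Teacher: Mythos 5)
Your approach works but is genuinely different from the paper's. The paper's proof is synthetic and avoids any curvature computation: for each $z$ orthogonal to $T_{\gamma(0)}H\otimes\mathbb{K}$ there is a unique totally geodesic real hyperbolic $3$-space $Q\supset H$ with $T_{\gamma(0)}Q = T_{\gamma(0)}H\oplus\mathbb{R}z$ (for $SU(n,1)$ and $Sp(n,1)$ one checks $T_{\gamma(0)}H + \mathbb{R}z$ is a totally real $3$-plane, hence tangent to an $\mathbb{R}{\bf H}^3$); since parallel transport along a loop in $H\subset Q$ preserves both $TH$ and $TQ$ (both totally geodesic) and preserves orientation, the one-dimensional orthogonal complement of $TH$ in $TQ$ is fixed pointwise, so $z\mapsto z$. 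The Cayley case is vacuous since $(TH\otimes\mathbb{O})^\perp = 0$. Your route --- reduce by $\mathbb{K}$-linearity of parallel transport, then apply Ambrose--Singer to the pullback connection over the simply connected surface $H$, reducing to the identity $R(X,Y)w=0$ for $X,Y\in TH$ and $w\perp TH\otimes\mathbb{K}$ --- is also valid and gives the result: the curvature identity holds in all cases (for $\mathbb{C}{\bf H}^n$, every term of the standard formula for $R(X,Y)w$ involves an inner product of the form $\langle X,w\rangle$, $\langle JX,w\rangle$ or $\langle JX,Y\rangle$, all of which vanish because $H$ is totally real and $w\perp TH\otimes\mathbb{C}$; the $Sp(n,1)$ and Cayley cases are similar), and since $H$ is simply connected the restricted holonomy on $(TH\otimes\mathbb{K})^\perp$ is therefore trivial. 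The paper's version buys economy, using only the existence of totally geodesic subspaces; yours buys an explicit and checkable curvature criterion, at the cost of invoking the curvature tensor case by case.

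Two of your supporting remarks are, however, not correct as stated. The normal connection of $H_\mathbb{K}$ in $\tilde M$ is \emph{not} flat: already in $\mathbb{C}{\bf H}^n$ one has $R(X,JX)w = c\,|X|^2 Jw \ne 0$ for $X$ tangent to the complex line and $w$ normal to it, so the normal holonomy of $H_\mathbb{C}$ is nontrivial. Nor is the normal connection of $H$ itself flat: for $X,Y\in TH$ orthonormal one finds $R(X,Y)(JX) = \pm JY \ne 0$, so the holonomy on the $J\,TH$-part of $\nu(H)$ is nontrivial, and the parenthetical claim that $H$ has trivial normal holonomy is false. What \emph{is} true, and what your Ambrose--Singer reduction actually uses, is that the restriction of the normal connection to the sub-bundle $(TH\otimes\mathbb{K})^\perp\subset\nu(H)$ is flat, which is exactly the identity $R(X,Y)w=0$ for $w\perp TH\otimes\mathbb{K}$. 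Since that is the statement you reduce to, the argument is not damaged, but those two side remarks should be deleted or corrected so as not to assert flatness of the larger normal bundles.
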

\begin{proof} The real plane $H$ is totally geodesic in $\tilde M$
and therefore the tangent plane
of $H$ is invariant under parallel transport along
loops $\gamma$ in $H$.

Assume for the moment that $G=SO(n,1)$. Then for every 
unit vector $z\in T^1_{\gamma(0)}\tilde M$ which is 
orthogonal to $T_{\gamma(0)}H$,  there is a unique
totally geodesic hyperbolic 3-space $Q\supset H$ embedded in 
$\tilde M$ whose tangent space at $\gamma(0)$ equals the span of 
$T_{\gamma(0)}H$ and $z$. Since $Q$ is totally geodesic, parallel transport
along smooth curves in $Q$ preserves the tangent space of $Q$.
Moreover, it preserves the orientation of a frame.
Thus parallel transport along $\gamma$  
maps $z$ to itself. This shows the claim in the case
that $G=SO(n,1)$.

If $G=SU(n,1)$ then observe that the complex structure $J$ is 
invariant under parallel transport.
On the other hand, the above reasoning shows that 
the restriction of parallel transport along $\gamma\subset H$ 
to the orthogonal complement of $TH\otimes \mathbb{C}$ is
the identity. If $G=Sp(n,1)$ then the same argument applies to the
quaternionic span $T_{\gamma(0)}H\otimes \mathbb{H}$ 
of $T_{\gamma(0)}H$. The statement is empty for 
$G=F_4^{-20}$.
\end{proof}

The \emph{geodesic flow} $\Phi^t$ acts on $T^1\tilde M$. 
The \emph{frame flow} $\Psi^t$ is the lift of $\Phi^t$ 
to a flow on the bundle ${\cal F}$ which is defined by 
$\Psi^t(v,F)=(\Phi^tv,\Vert F) $. 
Here $\Vert F$ denotes parallel transport of the frame $F$  
along the projection
of the flow line $t\to \Phi^tv$ of the geodesic flow  
to a geodesic in $\tilde M$. The frame flow is well defined
since the Riemannian metric and the complex (or quaternionic)
structure are parallel.

Let $\Gamma<G$ be any torsion free cocompact lattice.
The existence of such a lattice was established by 
Borel \cite{Bo63}.
Then $M=\Gamma\backslash \tilde M$ is a compact locally symmetric
space. The geodesic flow and the frame flow descend to 
flows on the unit tangent bundle $T^1M$ of $M$ and 
on the bundle of $\mathbb{K}$-frames over $T^1M$ which 
are again denoted by $\Phi^t,\Psi^t$, respectively. 

The Riemannian metric on $M$ naturally defines Riemannian
metrics on $T^1M$ and on the $\mathbb{K}$-frame bundle ${\cal F}$.
These metrics determine a Borel probability measure
$\lambda$ on ${\cal F}$ in the Lebesgue measure class 
which is invariant under the flow $\Psi^t$. The
following is a classical important result in representation
theory.

\begin{theorem}\label{mixing}
The frame flow $\Psi^t$ is exponentially mixing for the measure
$\lambda$.
\end{theorem}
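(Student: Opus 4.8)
The plan is to deduce exponential mixing of the frame flow $\Psi^t$ from the spectral theory of the relevant group $G$ acting on the homogeneous space $\Gamma\backslash G$, via the representation-theoretic notion of a spectral gap. First I would note that the $\mathbb{K}$-frame bundle ${\cal F}$ over $T^1M$ is itself a homogeneous space $\Gamma\backslash G/M_0$ for a suitable compact subgroup $M_0<K$ (namely the centralizer structure giving $SO(n-1)$, $SU(n-1)$, $Sp(n-1)$ or ${\rm Spin}(7)$ in the four cases), and the frame flow is the right action of the one-parameter $\mathbb{R}$-split Cartan subgroup $A=\{a_t\}$. Functions on ${\cal F}$ are thus $M_0$-invariant functions in $L^2(\Gamma\backslash G)$, and the frame flow correlations $\langle \varphi\circ\Psi^t,\psi\rangle_\lambda$ become matrix coefficients of the regular representation of $G$ on $L^2_0(\Gamma\backslash G)$ (the orthocomplement of the constants) evaluated along $a_t$.

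The key input is that for the rank one groups $G=SO(n,1)$, $SU(n,1)$, $Sp(n,1)$, $F_4^{-20}$ with $\Gamma$ cocompact, the representation $L^2_0(\Gamma\backslash G)$ has a uniform spectral gap: it is isolated from the trivial representation in the Fell topology, equivalently all its matrix coefficients of $K$-finite vectors decay with a uniform exponential rate in the Cartan parameter. For $Sp(n,1)$ and $F_4^{-20}$ this is automatic since these groups have property $(T)$; for $SO(n,1)$ and $SU(n,1)$ it follows from the classification of the unitary dual and the fact that a cocompact lattice has no non-tempered spherical complementary-series constituents approaching the trivial representation arbitrarily closely (the gap coming from the discreteness of the relevant part of the spectrum, or from general results of, e.g., the Kazhdan-type arguments adapted to rank one). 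I would then invoke the standard quantitative consequence — in the form due to Howe, Moore and refined by Cowling, Katok--Spatzier, Moore, and others — that a spectral gap for $G$ upgrades to exponential decay of correlations for the $A$-action on smooth (or Hölder) observables: there exist $C>0$, $\kappa>0$ so that $|\langle \varphi\circ\Psi^t,\psi\rangle_\lambda - \int\varphi\,d\lambda\int\psi\,d\lambda|\le C e^{-\kappa t}\|\varphi\|\,\|\psi\|$ in the appropriate Sobolev or Hölder norm. Applying this to $M_0$-invariant observables gives exactly exponential mixing of the frame flow.

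The main obstacle is the passage from an abstract spectral gap to an explicit exponential rate valid uniformly for all functions in a fixed regularity class: one must control the contribution of the non-spherical and complementary-series components of $L^2_0(\Gamma\backslash G)$, bound the growth of matrix coefficients in terms of Sobolev norms (interpolation between the a priori polynomial bound on matrix coefficients of smooth vectors and the exponential decay of $K$-finite ones), and verify that the compact group $M_0$ plays no harmful role. All of this is by now classical — it is precisely the content of the literature on exponential mixing of frame flows on rank one locally symmetric spaces (going back to work of Moore and of Pollicott, with the frame-bundle case treated by Brin--Pesin and quantitatively by the Howe--Moore/spectral-gap machinery) — so I would cite it rather than reprove it, which is consistent with the remark in the excerpt that this is ``a classical important result in representation theory.'' If a self-contained argument were desired, the cleanest route is: (i) decompose $L^2_0(\Gamma\backslash G)=\bigoplus_i \mathcal{H}_i$ into irreducibles; (ii) use the rank one classification to get that each $\mathcal{H}_i$ is tempered or lies in a complementary series bounded away from the trivial representation; (iii) quote the sharp decay estimates for matrix coefficients of such representations along $a_t$ (Harish-Chandra's $\Xi$-function bounds, plus the complementary-series shift); (iv) assemble these with Sobolev embedding to get the uniform estimate on ${\cal F}$.
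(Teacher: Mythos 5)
Your proposal takes essentially the same route as the paper: the paper's entire proof consists of observing that the statement is equivalent to exponential decay of matrix coefficients for the regular representation of $G$ on $L^2_0(\Gamma\backslash G)$, and citing Cowling--Haagerup--Howe \cite{CHH88} (with Bekka--Mayer \cite{BM00} for background), exactly the reduction you spell out via identifying ${\cal F}$ with $\Gamma\backslash G/M_0$ and the frame flow with the $A$-action. The only difference is one of exposition — you unpack the spectral-gap-to-mixing machinery that the paper leaves implicit in the citations.
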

\begin{proof} The statement of the theorem is equivalent to 
exponential decay of matrix coefficients for the lattice $\Gamma$
\cite{CHH88}. 
We refer to the book \cite{BM00} for an overview of this and related
results. 
\end{proof}

For completeness of exposition we explain the meaning of this
statement. Namely, for some $k\geq 2$ 
equip the space of smooth functions on 
${\cal F}$ with the $C^k$-norm $\Vert \,\Vert$ with respect to the
Riemannian metric. Then there is a number $\kappa>0$ so that for any
two functions $f,g$ of class $C^k$ we have
\[\vert \int (f\circ \Psi^t) gd\lambda-\int fd\lambda\int gd\lambda\vert
\leq e^{-\kappa t}\Vert f\Vert \,\Vert g\Vert/\kappa.\]

\subsection{A criterion for incompressibilty}

In this subsection we consider 
a closed Riemannian  manifold $M$ of nonpositive
sectional curvature 
with universal covering
$\tilde M$. 

Let $S$ be a closed oriented surface of genus
$g\geq 2$ equipped with a locally ${\rm CAT}(-1)$ geodesic metric.
We say that a continuous map 
$f:S\to M$ is \emph{incompressible} if 
$f_*:\pi_1(S)\to \pi_1(M)$ is injective, and we use this terminology also 
in the case that $S$ is a compact surface with boundary.

The fundamental group $\pi_1(S)$ of $S$ acts
on the universal covering $\tilde S$ of $S$ as a group of 
isometries. 
Since $S,M$ are $K(\pi,1)$ spaces, 
there is an $f_*$-equivariant map 
$F:\tilde S\to \tilde M$ which projects to $f$.  
Here $f_*:\pi_1(S)\to \pi_1(M)$ is the homomorphism induced by $f$.
We call such a map $F$ a 
\emph{canonical lift} of $f$.

%Let for the moment 
%$X,Y$ be arbitrary 
%$\delta$-hyperbolic spaces with Gromov boundaries $\partial X,
%\partial Y$. Let $F:X\to Y$ be
%a continuous map.
%A \emph{Cannon Thurston map} for $F$ is a
%continuous extension $\partial X\to \partial Y$ of $F$.
%If there is a group $G$ acting as a group of isometries on both $X,Y$ 
%and if the map $F$ is $G$-equivariant then
%we require that this extension is $G$-equivariant.

%The following is Lemma 2.1 of \cite{M98}. 

%\begin{lemma}\label{mj}
%Let $X,Y$ be $\delta$- hyperbolic spaces, with 
%boundaries $\partial X,\partial Y$,
%and let $F:X\to Y$ be a continuous map.
%A Cannon-Thurston map $\partial X\to \partial Y$ 
%for $F$ exists if and only if the
%following is satisfied. There exists a non-negative function $R(u)$ with 
%$R(u)\to \infty$ $(u\to \infty)$, and for all geodesic segments $\lambda$
%in $X$ 
%lying outside an $u$-ball around $x\in X$ the geodesic segment in $Y$ 
%connecting the endpoints of $F(\lambda)$ 
%lies outside the $R(u)$-neighborhood of $F(x)$.
%\end{lemma}

%The next observation follows 
%from Lemma \ref{mj} in a fairly straightforward way.

\begin{proposition}\label{cannonmap}
Let $S$ be a closed oriented 
surface with a locally ${\rm CAT}(-1)$
geodesic metric, 
and let $f:S\to M$ be a Lipschitz map
with canonical lift $F:\tilde S\to \tilde M$.
The following are equivalent.
\begin{enumerate}
\item $f$ is incompressible.
%\item $F$ admits a Cannon Thurston map.
\item $F$ is proper.
\end{enumerate}
\end{proposition}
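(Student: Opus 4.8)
The plan is to prove both implications directly, using the negative curvature of $\tilde S$ and the nonpositive curvature of $\tilde M$ together with cocompactness of $M$.

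\textbf{Proof that (2) implies (1).} I would argue by contraposition. Suppose $f$ is not incompressible, so there is a nontrivial $\alpha\in\pi_1(S)$ with $f_*(\alpha)=1$. Realize $\alpha$ by the axis of the corresponding deck transformation of $\tilde S$, i.e.\ a geodesic line $c:\mathbb{R}\to\tilde S$ which is invariant under $\alpha$ and along which $\alpha$ acts as translation by the (positive) translation length $\ell$. Then for every $n\in\mathbb{Z}$ the point $\alpha^n c(0)=c(n\ell)$ lies at distance $n\ell\to\infty$ from $c(0)$ in $\tilde S$, so the sequence $(\alpha^n c(0))_n$ leaves every compact subset of $\tilde S$. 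On the other hand, by $f_*$-equivariance of $F$ we have $F(\alpha^n c(0))=f_*(\alpha)^n F(c(0))=F(c(0))$ for all $n$, so the image sequence stays in the single point $F(c(0))$, hence in a compact set. Thus $F^{-1}$ of a compact set contains a sequence escaping to infinity, so $F$ is not proper. This gives $\neg(1)\Rightarrow\neg(2)$.

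\textbf{Proof that (1) implies (2).} Assume $f_*$ is injective; I want to show $F$ is proper, i.e.\ that $F^{-1}(B)$ is bounded in $\tilde S$ for every ball $B\subset\tilde M$. Since $f$ is Lipschitz, $F$ is Lipschitz, say with constant $L$. The key point is a lower bound: I claim there is a constant $a>0$ and $b\ge 0$ with $d_{\tilde M}(F(x),F(y))\ge a\, d_{\tilde S}(x,y)-b$ for all $x,y\in\tilde S$, i.e.\ $F$ is a quasi-isometric embedding. Granting this, properness is immediate: if $d_{\tilde S}(x,y)\to\infty$ then $d_{\tilde M}(F(x),F(y))\to\infty$, so preimages of bounded sets are bounded. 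To prove the quasi-isometric embedding claim I would use that $\tilde S$ is a proper ${\rm CAT}(-1)$ space on which $\pi_1(S)$ acts cocompactly by isometries, so $\pi_1(S)$ with a word metric is quasi-isometric to $\tilde S$ via an orbit map $\gamma\mapsto \gamma x_0$; similarly, since $M$ is closed and nonpositively curved, $\pi_1(M)$ is quasi-isometric to $\tilde M$ via an orbit map. Under these identifications, the restriction of $F$ to the orbit $\pi_1(S)x_0$ is, up to bounded error, the orbit map of $f_*:\pi_1(S)\to\pi_1(M)$. So it suffices to show the injective homomorphism $f_*$, composed with the inclusion-type orbit map into $\tilde M$, is a quasi-isometric embedding on the word metric of $\pi_1(S)$.

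\textbf{The main obstacle and how to handle it.} The heart of the matter — and what I expect to be the technical crux — is precisely that last claim: an injective homomorphism from a surface group into $\pi_1(M)$ need not be a quasi-isometric embedding of groups in general (distortion can occur), so injectivity of $f_*$ alone is not obviously enough. The way this is salvaged is via the geometry: $\pi_1(S)$ is Gromov hyperbolic (indeed $\tilde S$ is ${\rm CAT}(-1)$), and $\tilde M$ is a ${\rm CAT}(0)$ space with a cocompact isometric action of $\pi_1(M)$. One uses the following fact for ${\rm CAT}(-1)$ (or Gromov hyperbolic) source: an orbit of $\pi_1(S)$ acting on $\tilde S$ under an $f_*$-equivariant map $F$ either is bounded or the map spreads it uniformly, because a non-proper equivariant map from a hyperbolic space forces the existence of an element acting trivially on a limiting configuration, contradicting injectivity via a Morse-lemma / stability-of-geodesics argument in $\tilde S$. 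Concretely: if $F$ were not proper, one extracts a sequence $x_n\to\infty$ in $\tilde S$ with $F(x_n)$ bounded; project each $x_n$ back to a fundamental domain by $\gamma_n\in\pi_1(S)$, so $\gamma_n^{-1}x_n$ stays compact but $\gamma_n$ are all distinct (as $x_n\to\infty$), while $F(\gamma_n^{-1}x_n)=f_*(\gamma_n)^{-1}F(x_n)$ stays in a compact set of $\tilde M$; by properness of the $\pi_1(M)$-action on $\tilde M$ (cocompact, so proper) only finitely many distinct elements $f_*(\gamma_n)$ can move a compact set into a compact set, so $f_*(\gamma_n)=f_*(\gamma_m)$ for infinitely many pairs with $\gamma_n\ne\gamma_m$, contradicting injectivity of $f_*$. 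This last argument in fact proves properness of $F$ directly without needing the full quasi-isometric embedding statement, and it only uses that the $\pi_1(S)$-action on $\tilde S$ and the $\pi_1(M)$-action on $\tilde M$ are both proper and cocompact — so I would present it in this streamlined form rather than through the ${\rm CAT}$ geometry.
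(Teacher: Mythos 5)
Your proposal is correct and, once you discard the quasi-isometric-embedding discussion in favor of your ``streamlined form,'' it is essentially the proof in the paper: $(1)\Rightarrow(2)$ runs through a compact fundamental domain for $\pi_1(S)$ on $\tilde S$, equivariance of $F$, proper discontinuity of the $\pi_1(M)$-action on $\tilde M$, and injectivity of $f_*$, while $(2)\Rightarrow(1)$ notes that a nontrivial $\alpha\in\ker f_*$ produces an unbounded subset of $\tilde S$ (the axis / lifted geodesic of $\alpha$) mapped by $F$ into a bounded set. Your instinct to abandon the quasi-isometric-embedding route was sound --- injectivity of $f_*$ alone does not give undistortedness --- and the properness argument you settle on sidesteps that issue exactly as the paper does.
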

\begin{proof} %We begin with showing $(2)\Rightarrow (1)$.
%
%Thus assume that a Cannon-Thurston map 
%$\phi:\partial\tilde S\to \partial \tilde M$ for a canonical lift $F$ of $f$ 
%exists, and assume to the contrary that
%$f_*$ is not injective. Then there is an element $0\not=\alpha\in \pi_1(S)$ 
%with $f_*\alpha=0$.  Represent $\alpha$ by a closed geodesic in $S$,
%again denoted by $\alpha$. The loop
%$f(\alpha)\subset M$ is 
%contractible. Then $f(\alpha)$ lifts to a compact loop $\beta$ in $\tilde M$.
%The loop $\beta$  
%is the image under $F$ of a lift $\tilde \alpha$ 
%of $\alpha$ to $\tilde S$.
%
%
%Since there is a Cannon Thurston map for $F$, the property
%in Lemma \ref{mj} holds true.  With the notation from the lemma,
%there is a number
%$u>0$ so that
%the loop $\beta$ is contained in a ball of diameter $R(u)/2$
%in $\tilde M$.
%Let $x\in \tilde \alpha$ and let $\lambda$ be 
%a subsegment of $\tilde \alpha$ which does not intersect
%the $u$-neighborhood of $x$. By the choice of the function
%$R(u)$ as formulated in 
%Lemma \ref{mj}, the geodesic
%in $\tilde M$ connecting the
%endpoints of  $F(\lambda)\subset F(\tilde \alpha)$  
%does not intersect the $R(u)$-ball about
%$F(x)$. This contradicts the fact that the
%diameter of $F(\tilde \alpha)$ does not exceed $R(u)/2$.
%Thus $f_*$ is injective as claimed.

To show $(1)\Rightarrow (2)$ 
assume that $f_*$ is injective. 
Let $K\subset \tilde S$ be a compact fundamental domain for the 
action of $\pi_1(S)$.  Then $F(K)\subset \tilde M$ is compact. 
If $B\subset \tilde M$ is any compact set, then the set 
\[A=\{\psi\in \pi_1(M)\mid \psi(F(K))\cap B\not=\emptyset\}\] 
is finite, and by equivariance, 
\[F^{-1}(B)\subset \cup \{\gamma(K)\mid f_*\gamma\in A\}.\]
Since $f_*$ is injective and $A$ is finite, this means that
the closed set
$F^{-1}(B)\subset \tilde S$ 
is contained in finitely many translates of $K$ and hence
$F^{-1}(B)$ is compact. Therefore 
$F$ is proper as claimed. 

To show the 
implication $(2)\Rightarrow (1)$ assume that
$F$ is proper. Assume to the contrary that there is
an element $0\not=\alpha\in \pi_1(S)$ with $f_*\alpha=0$.
Represent $\alpha$ by a closed geodesic in $S$, again
denoted by $\alpha$. The loop $f(\alpha)\subset M$ is 
contractible. Then $f(\alpha)$ lifts to a compact loop 
$\beta$ in $\tilde M$. The loop $\beta$ is the
image under $F$ of a lift $\tilde \alpha$ of $\alpha$ to $S$.
As $\tilde \alpha$ is unbounded, $F^{-1}(\beta)\supset 
\tilde \alpha$ is not compact. The proposition follows.
\end{proof}

%The following definition is taken from \cite{KM12}.

%\begin{definition}\label{viable}
%Let $P$ be an $(R,\epsilon)$-tight
%pants decomposition of a hyperbolic surface $S$.
%A map $f:S\to M$ is \emph{geometric viable} with respect to 
%$P$ if it has the following properties.
%\begin{enumerate}
%\item $f$ is Lipschitz, and its restriction to each pants
%is smooth.
%\item $f$ maps each component of $P$ isometrically onto
%a closed geodesic in $M$.
%\item The restriction of $f$ to each component of $S-P$ 
%is incompressible.
%\end{enumerate} 
%\end{definition}

\section{Spaced laminations}\label{spaced}

The goal of this section is to establish some distance estimates
on closed 
surfaces of genus $g\geq 2$ equipped with a 
locally ${\rm CAT}(-1)$ geodesic metric
with some
specific properties. 

The immersed surfaces 
in closed rank one locally symmetric spaces 
we are going to construct in Section \ref{theglueing}
will have all these properties.
They are glued 
from smooth pieces with geodesic boundary, where the smooth
pieces are equipped with 
a  smooth metric of Gauss curvature at most $-1/2$.
We begin with showing that the metric on such a surface
is locally ${\rm CAT}(-1/2)$. Lemma \ref{localcat} below 
is certainly known to the experts. As we were not
able to find a reference in the literature, 
we give a proof. For ease of exposition, we rescale and
work with locally ${\rm Cat}(-1)$-metrics.

\begin{lemma}\label{localcat}
Let $S$ be a surface equipped  
with a length metric $d$, with (perhaps empty) geodesic
boundary.
Assume that $S$ contains a compact embedded
geodesic  
graph $Q$ such that
the restriction of the metric $d$ to each component of $S-Q$ 
is a smooth Riemannian metric of curvature at most $-1$.
Assume moreover that at each vertex of $Q$  
the cone angle is not smaller than $2\pi$.
Then $d$ is locally ${\rm CAT}(-1)$.
\end{lemma}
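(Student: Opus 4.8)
The plan is to verify the hypotheses of a standard glueing theorem for ${\rm CAT}(\kappa)$ spaces, namely that a length space obtained by glueing ${\rm CAT}(-1)$ pieces along a geodesic subcomplex is again ${\rm CAT}(-1)$ provided the gluing is done isometrically and the pieces are complete. First I would recall (or quote) the relevant fact that a locally ${\rm CAT}(-1)$ property is detected by checking that every point has a neighborhood which is ${\rm CAT}(-1)$; by the Cartan--Hadamard-type globalization for locally ${\rm CAT}(\kappa)$ spaces with $\kappa<0$ this is the right local condition. So the work reduces to producing, for each $x\in S$, a convex ${\rm CAT}(-1)$ neighborhood. For $x$ in the interior of a smooth component $S-Q$ this is immediate since the metric there is Riemannian of curvature $\leq -1$, so small geodesically convex balls are ${\rm CAT}(-1)$ by Alexandrov--Toponogov comparison.

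The two remaining cases are points on an open edge of $Q$ and vertices of $Q$. For a point $x$ in the relative interior of an edge $e$ of the graph $Q$, a small neighborhood $U$ is the union of two smooth Riemannian half-disks of curvature $\leq -1$ glued isometrically along a geodesic arc (the arc being a subsegment of $e$, geodesic in each adjacent piece). Here I would invoke Reshetnyak's glueing theorem: the isometric glueing of two ${\rm CAT}(-1)$ spaces along a complete convex subset is ${\rm CAT}(-1)$. The half-disks can be chosen geodesically convex and ${\rm CAT}(-1)$, and the gluing arc is convex in each, so $U$ is ${\rm CAT}(-1)$. For a vertex $v$ of $Q$ with cone angle $\geq 2\pi$, a small neighborhood is a union of finitely many smooth curved sectors of curvature $\leq -1$, glued cyclically (or in a tree pattern, following the local structure of the graph) along geodesic arcs emanating from $v$. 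Again each sector is ${\rm CAT}(-1)$ and the gluing arcs are convex, so iterated application of Reshetnyak's theorem shows the union of the sectors is ${\rm CAT}(-1)$; the hypothesis that the total angle at $v$ is at least $2\pi$ is exactly what is needed so that the link of $v$ has length $\geq 2\pi$, equivalently the space of directions at $v$ is ${\rm CAT}(1)$, which is the local condition at the cone point. (If the angle were less than $2\pi$ the space would fail to be nonpositively curved at $v$.)

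Having established that every point of $S$ has a ${\rm CAT}(-1)$ neighborhood, I would conclude that $d$ is locally ${\rm CAT}(-1)$ by definition. I should be slightly careful about boundary points: if $x$ lies on the geodesic boundary $\partial S$, one may either double $S$ along $\partial S$ and apply the interior argument, or simply note that a half-disk neighborhood of a boundary point of a ${\rm CAT}(-1)$ space is ${\rm CAT}(-1)$ since it is convex. The main obstacle is not any single estimate but rather assembling the correct local model at a vertex and citing the glueing theorem in the precise form needed (isometric glueing along complete, convex, locally compact subsets), together with the elementary but slightly fussy verification that the geodesic edges of $Q$ really are locally convex inside each adjacent smooth piece (which follows because they are geodesics of the ambient metric restricted to that piece, and complete geodesics in a ${\rm CAT}(-1)$ space are convex). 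Once these local models are in place the conclusion is immediate.
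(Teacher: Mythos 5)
Your treatment of the edge case is correct and is a genuine alternative to the paper's argument: you cite Reshetnyak's gluing theorem (Bridson--Haefliger II.11.1), whereas the paper essentially re-proves it in this special setting by taking a small triangle $T$ with one vertex $x$ on the edge and the other two vertices on opposite sides, splitting $T$ at the point $y$ where its opposite side crosses the edge, comparing each sub-triangle with a triangle in ${\bf H}^2$, and using that the comparison angle sum at $y$ is at least $\pi$ together with hyperbolic trigonometry (Alexandrov's lemma). Citing the theorem is cleaner; the paper's hands-on version pays off because the same template is meant to be reused at a vertex.

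The genuine gap is at an interior vertex $v$ of $Q$. There the sectors of $U\setminus Q$ around $v$ are arranged \emph{cyclically}, and iterated Reshetnyak only handles tree-like (linear) gluings. After gluing $S_1,\dots,S_{n-1}$ sequentially along edges, the final gluing of $S_n$ along its two edges closes a loop: one is gluing a ${\rm CAT}(-1)$ space to itself along two disjoint convex arcs, which is not covered by Reshetnyak's theorem and which can certainly destroy nonpositive curvature when the cone angle is small. You correctly identify that the hypothesis ``cone angle $\geq 2\pi$'' is exactly what must rescue the argument, but merely asserting that the space of directions at $v$ is ${\rm CAT}(1)$ is not a proof here --- it invokes a link condition/cone theorem which Bridson--Haefliger prove only for $M_\kappa$-polyhedral complexes with constant-curvature cells, not for pieces that are Riemannian of variable curvature $\leq -1$. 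Two ways to close the gap: (a) run a developing-map argument, passing to the $\mathbb{Z}$-cover of $U\setminus\{v\}$ (an infinite linear chain of sectors, which \emph{is} ${\rm CAT}(-1)$ by iterated Reshetnyak), and use the angle hypothesis to show that any sufficiently small geodesic triangle in $U$ lifts isometrically into a fundamental domain of the cover; or (b) adapt the paper's direct comparison at the vertex: split a small triangle along the edges of $Q$ through $v$, compare each smooth sub-triangle, and note that the cone angle hypothesis is precisely what guarantees the comparison angle sum at each interior splitting point is at least $\pi$, so Alexandrov's lemma still applies. The paper's approach extends to the vertex with less friction because it never builds the neighborhood by sequential gluing; it only ever compares one small triangle at a time.
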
 
\begin{proof} It suffices to show that every point $x\in S$ has a convex
neighborhood $U(x)$ so that the triangle comparison property
holds for triangles with vertices in $U(x)$ (see Proposition II.1.7 of
 \cite{BH99}).

This follows from standard comparison 
if $x$ is an interior point of a component of $S-Q$ 
where the metric is smooth. Let $x$ be an interior 
point of an edge $\zeta$ of $Q$.
Assume that $\zeta$ separates an open
contractible neighborhood $U$ of $x$
into halfplanes $W_1,W_2$ with smooth metric and geodesic boundary.

Let $y_i\in W_i$ and consider a geodesic triangle $T$ with vertices
$x,y_1,y_2$ and edges of minimal length. 
Since the edge $\zeta$ of the graph $Q$ is a geodesic 
for the metric $d$ and the metric
in $S-Q$ is of curvature at most $-1$, 
the side of $T$ connecting $y_1$ to $y_2$ intersects $\zeta$
in a single point $y$ provided that the distance between $y_1,y_2$ is 
sufficiently small. The triangle $T_i$ with vertices
$x,y,y_i$ $(i=1,2)$ 
satisfies the angle comparison property. In particular, the Aleksandrov
angle of $T_i$ at $y$ is not bigger than the 
comparison angle in the hyperbolic plane ${\bf H}^2$.

For $i=1,2$ let $\overline{T_i}$ be a comparison 
triangle in ${\bf H}^2$ whose side lengths coincide with the
side lengths of $T_i$. Assume that 
$\overline{T_1}\cap \overline{T_2}$ is a common side of $\overline{T_1},
\overline{T_2}$ of length $d(x,y)$. By 
the discussion in the previous paragraph,
the angle sum of the geodesic quadrangle 
$\overline{T_1}\cup\overline{T_2}$  
at the point $\hat y\in \overline{T_1}\cap \overline{T_2}$ corresponding to 
the common vertex $y$ of $T_1$ and $T_2$ is not smaller than $\pi$. 
Hyperbolic trigonometry now shows that in a comparison triangle 
$\overline{T}\subset {\bf H}^2$ 
for $T$, the distance between the points $\bar x,\bar y\in \overline{T}$ 
corresponding to the points $x,y\in T$ 
is not smaller than the distance between $x$ and $y$. 

Using comparison
for the Riemannian triangles $T_1,T_2$ with triangles in ${\bf H}^2$, 
we conclude that the distance
between the vertex $\bar x$ of $\overline{T}$ corresponding to $x$ and 
a point on the 
opposite opposite side of $\bar T$ is not smaller than the distance
between $x$ and the corresponding point on the side of $T$ opposite to $x$.
With the same argument, this estimate also holds true for distances
between the other vertices and points on the opposite sides.

Proposition II.1.7 of \cite{BH99} now shows that the 
path metric $d$ on $S$ is locally ${\rm CAT}(-1)$  
on the complement of the vertex set of $Q$.

Using the condition on cone angles,  
the same argument also holds true near a vertex of the graph.
This implies the lemma.
\end{proof}

As a consequence, a closed curve on a surface
with the properties in Lemma \ref{localcat} 
has a unique geodesic representative in its free homotopy
class. Moreover, local geodesics lift to curves in the
universal covering which realize the distance between their
endpoints.

The surfaces we construct
will be glued from pairs of pants with specific properties.
In the remainder of this section we discuss 
those properties that are used to establish incompressibility.

Let $P_0$ be such a pair of pants with geodesic
boundary. Any two boundary geodesics are connected
by a unique shortest geodesic arc. Such a geodesic arc
is called a \emph{seam} of $P_0$.
These seams decompose $P_0$ into 
hexagons with geodesic boundary.
The angles in the sense of Aleksandrov of these
hexagons are at least $\pi/2$. 
The endpoints of the
seams define two distinguished points on each boundary
geodesic of $P_0$. We call these points the 
\emph{feet} of the pair of pants. 

In the
cases we are interested in, these hexagons are all
right angled since the restriction of the metric
to a pair of pants is smooth in a neighborhood of the
seams. In the remainder of this section 
we will use this assumption to faciliate the
notations, although it is nowhere used in the arguments. 

Each component $\alpha$ of the pants
decomposition is contained in the boundary of two
(not necessarily distinct) pairs of pants $P_1,P_2$. 
These pairs of pants
are glued with an orientation reversing
isometry along $\alpha$. 
The feet of $P_1$ on $\alpha$ need not coincide with the
feet of $P_2$ on $\alpha$. We define the 
\emph{shear} of the component $\alpha$ of ${\cal P}$ 
to be the pair of distances on $\alpha$ between
the feet of $P_1,P_2$ which are determined by the
orientations of $\alpha$ as boundary components of $P_1,P_2$.

More precisely, choose an endpoint $x\in \alpha$ of a seam of $P_1$ 
on $\alpha$. The orientation of $P_1$ defines an orientation
of $\alpha$. The oriented distance between $x$ and 
a seam of $P_2$ is the distance along $\alpha$ 
between $x$ and the first point $y$ on $\alpha$ which is
a seam of $P_2$. This distance equals the oriented distance
between $y$ and a seam of $P_1$ provided that
the second seam of $P_1$ does not lie between $x$ and $y$
along the oriented subarc of $\alpha$ connecting
$x$ to $y$.
Since the latter property holds true in the situations we are interested in 
we will always assume in the sequel that this is the case.

If the metric on $S$ is smooth and of constant 
curvature then 
the seams decompose each boundary geodesic into two arcs
of equal length and 
the two shear
parameters coincide. However, this need not be the case if the 
curvature is non-constant.
We say that the shear of the pants curve $\alpha$ 
is \emph{contained in an interval $[a,b]$} only if
both shear parameters in the pair are contained 
in this interval. 
In this vain, the following definition 
(which is motivated by the work of 
Kahn and Markovic \cite{KM12})
is natural for 
hyperbolic surfaces but harder to accomplish for surfaces
with arbitrary locally ${\rm CAT}(-1)$ geodesic metrics.

\begin{definition}\label{tightpants}
For some $\delta\in (0,1/4)$, $R>10$, 
a pants decomposition ${\cal P}$ of $S$ is 
\emph{$(R,\delta)$-tight} if 
the following holds true,
\begin{enumerate}
\item[$\bullet$]
The lengths of the pants curves
are contained in the interval $[R-\delta,R+\delta]$.
\item[$\bullet$]
The seams of a pair of pants decompose the boundary
geodesics into two subarcs whose lengths are contained 
in the interval $[R/2-\delta,R/2+\delta]$.
\item[$\bullet$]
The shear of each component of ${\cal P}$ is contained in the 
interval
$[1-\delta,1+\delta]$.
\end{enumerate}
\end{definition}

An $(R,\delta)$-tight pants decomposition of $S$ 
lifts to a discrete $\pi_1(S)$-invariant geodesic
lamination $\mu$ on the universal covering
$\tilde S$ of $S$. 
Figure A shows a geometric model 
for the geodesic lamination defined by an $(R,\delta)$-tight
pants decomposition.

\begin{figure}[ht]
\begin{center}
\includegraphics[width=0.7\linewidth]{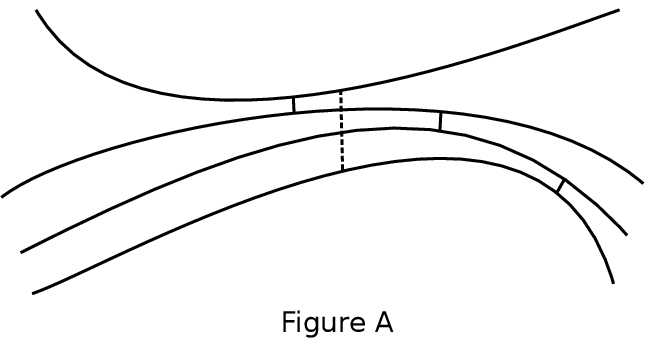}
\end{center}
\end{figure}

We will need some additional geometric information 
on the pants decompositions we are going to use. Namely,
for a number $C>0$  
call an $(R,\delta)$-tight  pants decomposition ${\cal P}$ of 
$S$ \emph{centrally $C$-thick} if 
the following holds true. 
Let $\beta:[0,s]\to S$ be a geodesic arc with endpoints on ${\cal P}$.
Assume that $\beta$ intersects
some pants curve $\alpha$ of ${\cal P}$ at a point whose
distance along $\alpha$ is at least
$R/4-2$ from the endpoint of a seam on $\alpha$.
Then the length of $\beta$ is at least $C$.

\begin{figure}[ht]
\begin{center}
\includegraphics[width=0.7\linewidth]{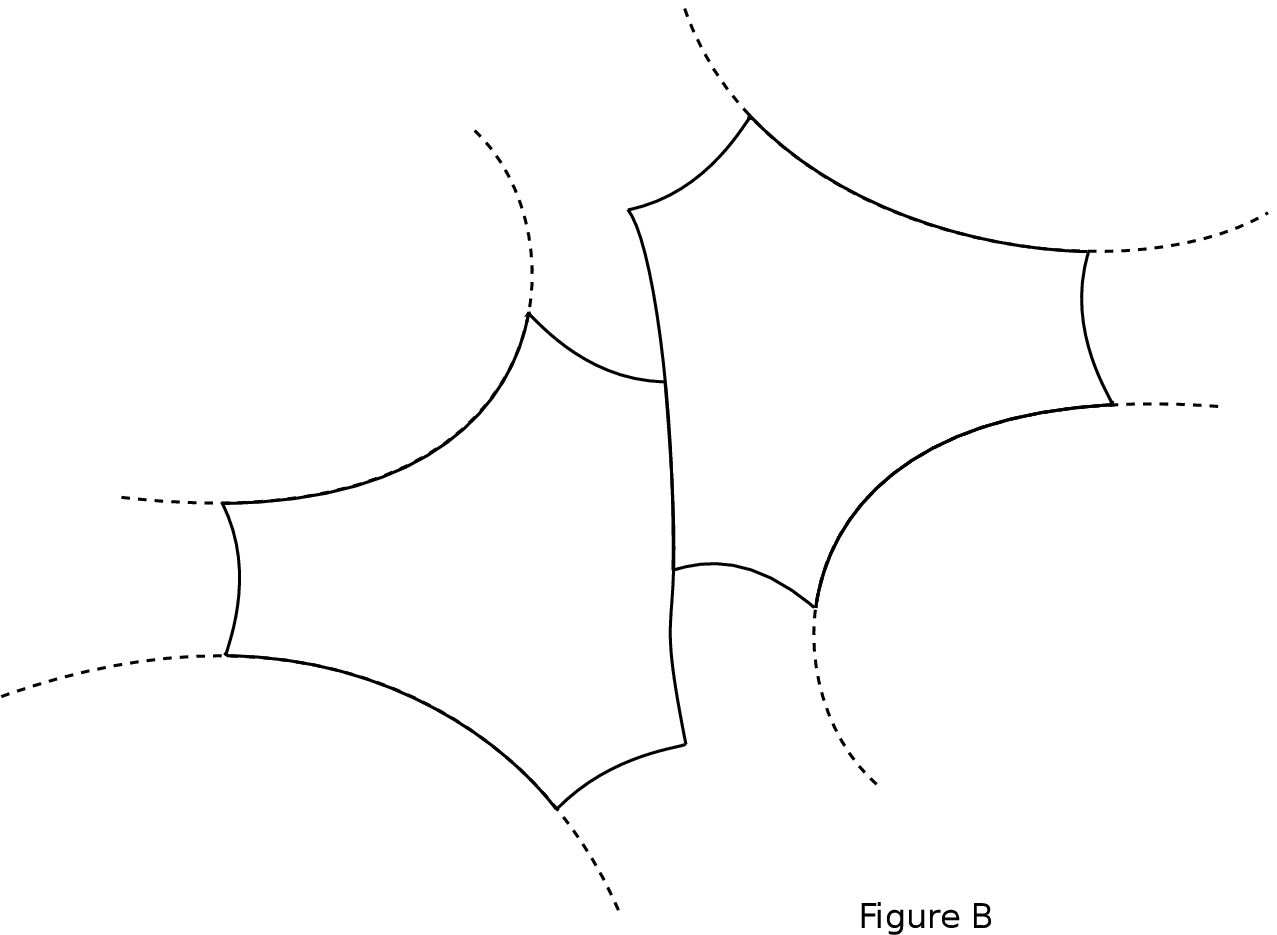}
\end{center}
\end{figure}

The following technical lemma uses the property described in 
this definition in an essential way. 
For its formulation, 
let ${\cal P}$ be an $(R,\delta)$-tight pants decomposition
of a surface $S$ equipped with a 
locally ${\rm CAT}(-1)$-metric.
For each pants curve $\alpha$ and every $x\in \alpha$ define
$\tau(x)$ to be the maximum of one and the 
distance of $x$ along $\alpha$ 
to the endpoint of a seam of ${\cal P}$ on $\alpha$. 
We then can view $\tau$ as a function on 
${\cal P}$ with values in the interval $[1,R/2+\delta]$. 
Let $b>1$.
Let $\zeta:[0,T]\to S$ be any geodesic segment
which is transverse to ${\cal P}$ 
and intersects ${\cal P}$ in the points
$\zeta(t_i)$ $(1\leq i\leq m)$.
Define
\[f(\zeta)=\sum_{\zeta(t_i)\in {\cal P}}\frac{1}{\tau(\zeta(t_i))^b}.\]

The estimate in the following lemma 
(which is a variant of a construction in \cite{KM12}) is
used in the proof of Lemma \ref{anglecontrol} which
is the main technical tool of this work.

\begin{lemma}\label{longarcs}
For every $C>0$ there is a number $\chi>0$ 
with the following property.
Let $R>10$, $\delta<1/10$ and let  
$S$ be a closed oriented surface of genus $g\geq 2$ equipped with
a locally ${\rm CAT}(-1)$-metric and an
$(R,\delta)$-tight centrally $C$-thick pants decomposition.
Then $f(\zeta)<\chi$ for every geodesic arc 
$\zeta$ on $S$ of length smaller than
$\min\{1/4,C\} $ which is transverse to $P$.
\end{lemma}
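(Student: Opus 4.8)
The plan is to bound $f(\zeta)$ by controlling how many times a short arc $\zeta$ can meet $\mathcal P$ and, more importantly, how close those intersection points can be to the endpoints of seams (i.e. how small $\tau(\zeta(t_i))$ can be). Fix $C>0$ and assume without loss of generality $C<1/4$, so the hypothesis says $\zeta$ has length $\ell(\zeta)<C$. The key mechanism is the central $C$-thickness assumption: if $\zeta$ crosses a pants curve $\alpha$ at a point at distance at least $R/4-2$ along $\alpha$ from every seam endpoint, then $\ell(\zeta)\geq C$, contradicting the length bound. Hence every intersection point $\zeta(t_i)$ satisfies $\tau(\zeta(t_i))\leq R/4-2$, so each term $\tau(\zeta(t_i))^{-b}$ is bounded below away from zero only when the crossing is genuinely near a seam.

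First I would bound the total number $m$ of intersection points. Since the pants decomposition is $(R,\delta)$-tight, the surface is locally ${\rm CAT}(-1)$ by Lemma \ref{localcat}, so $\zeta$ is the unique local geodesic between its endpoints and lifts to a distance-realizing geodesic in $\tilde S$. In $\tilde S$ the pants curves lift to a discrete geodesic lamination $\mu$ whose leaves have pairwise distance bounded below (the complementary regions are hexagons and pants with all side lengths comparable to $R$, hence their injectivity-type radius is bounded below by a universal constant, using $R>10$, $\delta<1/10$). A geodesic segment of length $<C<1/4$ can therefore cross the leaves of $\mu$ only a bounded number of times; call the bound $m_0$, depending only on the geometry of a ${\rm CAT}(-1)$ pair of pants with boundary lengths in $[R-\delta,R+\delta]$ — in fact $m_0$ can be taken independent of $R$ once $R>10$, since consecutive crossings of $\zeta$ with $\mu$ that bound a short return must enter and exit the same complementary hexagon, and a short geodesic in such a hexagon meets its boundary in at most two points.

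Second, and this is where the $C$-thickness is used most sharply, I would argue that among these $\leq m_0$ crossings at most a controlled number can be close to a seam endpoint, and for those that are, $\tau$ cannot be too small. Suppose $\zeta$ crosses $\alpha$ at $\zeta(t_i)$ with $\tau(\zeta(t_i))$ small, i.e. $\zeta(t_i)$ lies within distance $\tau_0$ along $\alpha$ of a seam endpoint $p$. The seam through $p$ has definite length (comparable to the width of the hexagon, bounded below universally), and $\zeta$ must stay within the $\ell(\zeta)<C$-ball of $\zeta(t_i)$; a lower bound on $\tau(\zeta(t_i))$ comes from noting that if $\zeta$ passed too close to $p$ it could be pushed, along $\alpha$, past $p$ into a region at distance $\geq R/4-2$ from all seam endpoints within the short length budget only if $R/4-2<C$, which fails since $R>10$ gives $R/4-2>1/2>C$. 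Turning this around: for a crossing at distance $t$ from the nearest seam endpoint, central $C$-thickness with the arc obtained by extending $\zeta$ a tiny bit along $\alpha$ forces $t$ to be bounded below by a universal constant $c=c(C)>0$. Therefore $\tau(\zeta(t_i))\geq \min\{1,c\}$ for every $i$, and
\[
f(\zeta)=\sum_{i=1}^{m}\frac{1}{\tau(\zeta(t_i))^b}\leq \frac{m_0}{\min\{1,c\}^{\,b}}=:\chi,
\]
which depends only on $C$ (and on the fixed exponent $b$), as required.

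The main obstacle is the second step: making rigorous the claim that a crossing near a seam endpoint, combined with the $C$-thickness hypothesis, forces a definite lower bound on $\tau$. One must carefully produce, from $\zeta$ near such a crossing, an auxiliary geodesic arc with endpoints on $\mathcal P$ that meets some pants curve at a point far (distance $\geq R/4-2$) from all seams while still being short — the natural candidate is to concatenate a subarc of $\zeta$ with a subarc of $\alpha$ running from $\zeta(t_i)$ toward the "fat" middle of $\alpha$ — and then invoke central $C$-thickness to derive a contradiction with shortness. Handling the geodesic-versus-concatenation issue (the concatenation is only piecewise geodesic, so one replaces it by its geodesic representative rel endpoints and checks the crossing is not destroyed, using ${\rm CAT}(-1)$ convexity) is the technical heart of the argument; the counting bound $m\leq m_0$ is comparatively routine hyperbolic geometry of pants.
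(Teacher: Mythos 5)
Your proof proposal has a genuine gap: the claimed bound $m\le m_0$ with $m_0$ independent of $R$ is false, and it is essential. Near a seam endpoint the two leaves bounding a complementary hexagon are at distance comparable to $e^{-R/4}$, so an arc of length $<1/4$ can pass through a chain of hexagons and cross on the order of $R$ leaves of $\mu$, not a bounded number. Your argument that ``a short geodesic in a hexagon meets its boundary in at most two points'' is true per hexagon but doesn't bound the number of hexagons visited, since those hexagons can be exponentially thin in the corridor near a seam. The paper's own proof explicitly allows roughly $R/2$ intersection points (the sum runs up to $m$ the smallest integer $>R/2$). Your second claim — a lower bound $\tau\ge\min\{1,c\}$ — is vacuously true because $\tau$ is by definition the maximum of $1$ and a distance, so no $C$-thickness argument is needed there; but combined with an unbounded $m$, the estimate $f(\zeta)\le m_0/\min\{1,c\}^b$ grows with $R$ and gives no uniform $\chi$.

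The mechanism that actually makes the lemma true is quite different: after orienting $\zeta$ and splitting it once, one shows the consecutive values $\tau(\zeta(t_i))$ \emph{increase} by at least $1/2$ at each crossing (when the quadrangles cut off by $\zeta$ in the tessellation lie on one side of $\zeta$). So even though $m$ can be of order $R$, the sum is dominated by $\sum_{i\ge 0}(\tau_0+i/2)^{-b}+O(1)$, which is uniformly bounded because $b>1$. The central $C$-thickness enters only to guarantee $\tau(\zeta(t_0))<R/4-2$, i.e.\ that the arc begins in a regime where the quadrangle structure and the $+1/2$ increment apply; it is not used to produce a pointwise lower bound on $\tau$ or an upper bound on $m$. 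To repair your argument you would need to replace the counting step by this monotonicity of $\tau$ along $\zeta$.
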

\begin{proof} Let $\tilde S$ be the universal covering of $S$.
The pants decomposition ${\cal P}$ lifts to a geodesic lamination
$\mu$ on $\tilde S$. The lifts of the seams of ${\cal P}$ decompose
the complementary components of $\mu$ into 
right angled hexagons. These hexagons define a tesselation of 
$\tilde S$ which is invariant under the action of $\pi_1(S)$.
Call a lift to $\tilde S$ of a seam of ${\cal P}$ a \emph{seam} of $\mu$. 
Let $\tau:\mu\to [0,\infty)$ be the function which 
associates to a point $x$ on a leaf $\alpha$ 
of $\mu$ the maximum of one and the 
minimal distance between
$x$ and an endpoint on $\alpha$ of some seam.

Let $\zeta:[0,a]\to \tilde S$ 
be a geodesic arc parametrized by arc length 
which is transverse to $\mu$ and such that
$\zeta(0)\in \mu, \zeta(a)\in \mu$. 
Assume that the length $a$ of $\zeta$ does not exceed
${\rm min}\{1/4,C\}$.

Let $0=t_0<t_1<\dots <t_m=a$ be the consecutive
intersection points of $\zeta$ with $\mu$. 
Assume for the moment that 
$\zeta[t_0,t_1]$ does not intersect a seam.
This is equivalent to stating that $\zeta(t_0,t_1)$ is 
contained in the interior of a hexagon $H_0$ of the 
$\pi_1(S)$-invariant tesselation and cuts $H_0$ into 
a quadrangle $Q_0$ and a hexagon $H_0-Q_0$.
The side $\xi$ of $Q_0$ opposite to $\zeta[t_0,t_1]$ 
is a seam. The quadrangle $Q_0$ has two right angles
at the endpoints of $\xi$. The sides of $Q_0$ adjacent
to $\zeta[t_0,t_1]$ 
are subarcs of leaves of $\mu$. 

Consider first the case that 
with respect to the orientation of $\tilde S$, the 
orientation of $\zeta$ defines the boundary orientation of $Q_0$.
Then the side $\xi$ of $Q_0$ is to the left of $\zeta$.
Since the length of $\zeta$ is smaller than $C$, 
by the definition of an $(R,\delta)$-tight 
centrally $C$-thick pants decomposition, we have
$\tau(\zeta(t_0))<R/4-2$. 

Now there are two possibilities.
In the first case, 
$\tau(\zeta(t_0))>1$. Since $\delta <1/10$ 
and since the length of 
$\zeta$ does not exceed $1/4$, 
the value
$\tau(\zeta(t_1))$ is the distance between 
$\zeta(t_1)$ and the side $\xi$ of the quadrangle $Q_0$.
Moreover, we have 
$\tau(\zeta(t_1))\geq \tau(\zeta(t_0))+1/2$.
In the second case, we have 
$\tau(\zeta(t_0))=1$. This means that the
distance along the leaves of $\mu$ between $\zeta(t_0)$ and 
a seam on the leaf of $\mu$ containing
$\zeta(t_0)$ is at most one.
The argument from the first case now shows that 
$\tau(\zeta(t_3))\geq \tau(\zeta(t_2))+1/2$.

Proceeding inductively and using the definition of a
centrally thick pants decomposition and the assumption 
on the length of $\zeta$, 
we conclude that 
\[f(\zeta)\leq 
\sum_{i=0}^{m}\frac{1}{(\tau(\zeta(t_0))+i/2)^b}+2\]
where $m$ is the smallest integer larger than $R/2$.  
This shows the lemma for geodesic arcs $\zeta$ 
with the following property. The arc $\zeta$ 
does not intersect a seam and
moreover,
with respect to a fixed orientation of $\zeta$,
all seams which are closest to $\zeta$ along the leaves of 
$\mu$ crossed through by $\zeta$ are
to the left of $\zeta$ as described above, i.e. the 
quadrangles  defined by the intersection of $\zeta$ with
the interiors of the hexagons from the tesselation lie to the 
left of $\zeta$. 

If with respect to a fixed orientation of $\zeta$,
some of the quadrangles defined by the intersection
of $\zeta$ with the interiors of the hexagons
from the tesselation lie to the right of $\zeta$ and 
some others lie to the left, then
we can decompose $\zeta$ into two disjoint
subarcs to which the above discussion can be applied.
The same holds true if $\zeta$ intersects a seam.
The lemma follows.
\end{proof}

\begin{remark}\label{onlylam}
Lemma \ref{longarcs} is also valid for 
an arbitrary discrete geodesic lamination on a simply connected
${\rm CAT}(-1)$-surface so that there is a system of 
shortest distance arcs between neighboring leaves of the
lamination with the properties described in the definition
of a tight pants decomposition. 
\end{remark}

Hyperbolic trigonometry implies that 
$(R,\delta)$-tight pants decompositions of hyperbolic
surfaces are centrally $C_0$-thick for a universal number
$C_0>0$. We formulate this as a lemma.

\begin{lemma}\label{realhyp}
There are numbers $C_0>0,R_0>0,\delta_0>0$ such that
for all $R>R_0,\delta<\delta_0$, every
$(R,\delta)$-tight pants decomposition of a 
surface of constant curvature $-1$ is centrally $C_0$-thick.
\end{lemma}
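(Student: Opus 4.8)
The plan is to reduce the statement to an explicit computation in the hyperbolic plane, using the fact that on a surface of constant curvature $-1$ an $(R,\delta)$-tight pants decomposition has a completely rigid local model: each pair of pants is the unique right-angled hyperbolic pair of pants with the prescribed boundary lengths in $[R-\delta,R+\delta]$, its seams split each boundary geodesic into two arcs of exactly equal length (so in fact both arcs have length in $[R/2-\delta/2, R/2+\delta/2]$, sharpening the second bullet of Definition \ref{tightpants}), and the shear along each pants curve is a single number in $[1-\delta,1+\delta]$. First I would fix a pants curve $\alpha$ and a point $p$ on it whose distance along $\alpha$ to the nearest seam endpoint is at least $R/4-2$; I must show that any geodesic arc $\beta:[0,s]\to S$ with endpoints on ${\cal P}$ passing through $p$ has length at least some universal $C_0$.

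Next I would lift to the universal cover $\widetilde S={\bf H}^2$ and work in the convex region bounded by the lift $\tilde\alpha$ of $\alpha$ together with the two adjacent leaves of the lifted lamination $\mu$ on either side of $\tilde\alpha$ — these neighboring leaves are the boundary geodesics of the two pants adjacent to $\alpha$, sitting at distance equal to the appropriate seam lengths (each $\approx R/2$) measured along $\tilde\alpha$ from the feet. The key geometric point is that, because $p$ lies at distance $\geq R/4-2$ from every seam endpoint on $\tilde\alpha$ and the seams on the neighboring leaves are likewise anchored near their feet, the two leaves of $\mu$ nearest to $\tilde\alpha$ on either side are, in a neighborhood of $p$, uniformly far from $\tilde\alpha$: by the collar lemma / explicit hyperbolic trigonometry for right-angled hexagons with side lengths $\asymp R$, the distance in ${\bf H}^2$ from $p$ to either neighboring leaf is bounded below by a universal constant $2C_0>0$ once $R>R_0$ and $\delta<\delta_0$. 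Since $\beta$ starts and ends on ${\cal P}$, its lift $\tilde\beta$ starts and ends on leaves of $\mu$; as it passes through $p$ it must reach a leaf of $\mu$ other than $\tilde\alpha$ (or return to $\tilde\alpha$ after going around, which is even longer), hence it must travel hyperbolic distance at least the distance from $p$ to the nearest such leaf, which is $\geq 2C_0$. Taking $C_0$ as this universal bound and using that hyperbolic distance is realized by geodesics gives the length estimate.

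The main obstacle is the trigonometric estimate itself: one must show that the perpendicular distance from a point $p$ on a boundary geodesic $\alpha$ of a right-angled hyperbolic pair of pants to the \emph{next} parallel leaf of the lamination stays bounded below as $R\to\infty$, uniformly over which of the finitely many local configurations (same pair of pants on both sides, shear in $[1-\delta,1+\delta]$, etc.) occurs, and crucially that the condition ``distance $\geq R/4-2$ to a seam'' is exactly what keeps $p$ away from the thin parts where two leaves of $\mu$ nearly touch. I would carry this out by writing down the right-angled hexagon with two sides of length $\approx R/2$ and the connecting seam of length $\ell(R)$, estimating $\ell(R)$ (it stays bounded below, in fact $\to$ a positive constant, by the hexagon formula $\cosh\ell\,\sinh a\,\sinh b = \cosh a\cosh b + \cosh c$ type relations), and checking that the distance function from $p$ to the opposite leaf is monotone in the distance from $p$ to the seam endpoint and bounded below on the range $[R/4-2,\,R/2+\delta]$. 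This is a finite, if slightly tedious, list of elementary hyperbolic-trigonometry inequalities, and all constants produced are independent of $S$; everything else in the proof is soft.
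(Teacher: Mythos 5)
Your plan follows essentially the same route as the paper's proof: lift to $\mathbf{H}^2$, cut each pair of pants along its seams into two isometric right-angled hexagons whose three pairwise non-adjacent ``long'' sides have length within $\delta/2$ of $R/2$, and then argue that a point $p$ on a long side at distance at least $R/4-2$ from the seam endpoints lies at uniformly bounded-below hyperbolic distance from the other two long sides, i.e.\ from the neighbouring leaves of the lifted lamination. Where the two arguments differ is in how that last bound is obtained. The paper avoids explicit trigonometry: it observes that as $R\to\infty$ and $\delta\to 0$ such a hexagon converges, modulo $PSL(2,\mathbb{R})$ and in the Hausdorff topology on $\overline{\mathbf{H}^2}$, to an ideal triangle, and that on a side $\gamma$ of an ideal triangle the orthogonal projection $x$ of the opposite ideal vertex is at a definite distance $c>0$ from the other two sides for every point of $\gamma$ within distance $3$ of $x$; one then takes $C_0=c/2$. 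Your proposal replaces this soft limit by a direct computation in the right-angled hexagon, which is a legitimate (if more laborious) alternative, and both arguments use the rigidity of the constant-curvature case in the same way (equal half-arcs, a single shear parameter).

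However, one intermediate claim in your plan is wrong: the seam length $\ell(R)$ of such a hexagon does \emph{not} stay bounded below, let alone tend to a positive constant. From the relation you quote, $\cosh\ell\,\sinh a\,\sinh b=\cosh a\cosh b+\cosh c$ with $a,b,c\approx R/2$, one gets $\cosh\ell\to 1$, i.e.\ $\ell\asymp e^{-R/4}$, exactly as recorded in the paper's proof. This exponential shrinking of the seam is precisely why the hypothesis that $p$ be far from the seam endpoints is indispensable: near a seam endpoint the two neighbouring leaves come together exponentially fast and there is no thickness to be had there. Your plan does end with the correct check (that the distance from $p$ to the opposite leaf is bounded below on the stated range of distances from the seam endpoint), so the strategy survives, but the stated estimate of $\ell(R)$ would have to be corrected, and the bound genuinely comes from the position of $p$ near the midpoint of the long side, not from any lower bound on the seam itself. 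A related small caveat: invoking the collar lemma points in the wrong direction here, since the collar width of a pants curve of length $\approx R$ also decays like $e^{-R/2}$.
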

\begin{proof} 
Let $S$ be a hyperbolic surface equipped with 
an $(R,\delta)$-tight pants decomposition ${\cal P}$.
The universal covering of $S$ is the hyperbolic plane
${\bf H}^2$.

Each component $X_0$ of $S-{\cal P}$ 
is a union of two isometric right angled hyperbolic 
hexagons which are obtained by cutting $X_0$ open along the 
seams. The length of a long side 
of such a hexagon equals half the length of the component 
of ${\cal P}$
containing it, i.e. it equals 
$R/2$ up to an additive error of at most $\delta/2$.
Hyperbolic trigonometry (Theorem 2.4.1 of \cite{B92}) 
shows that  
the length of a side which corresponds to a seam is comparable to 
$e^{-R/4}$.

%The preimages in $\tilde S$ of these hexagons define a 
%$\pi_1(S)$-invariant
%tessellation of $\tilde S$. 
%The lamination $\mu$ is 
%the union of the long sides of the hexagons from the tessellation. 

Let $T\subset {\bf H}^2$ be an ideal hyperbolic triangle,
i.e. a geodesic triangle with vertices on the ideal boundary
$\partial {\bf H}^2$ of ${\bf H}^2$,
and let $\gamma$ be one of the sides of $T$.  
The shortest distance projection into $\gamma$ 
of the ideal vertex of $T$ opposite to $\gamma$ 
is a distinguished point $x$ on $\gamma$.
There is a number $c>0$ such that the distance
between any point on $\gamma$ whose distance to $x$ is at most
three and a side of $T$ distinct from $\gamma$ is 
at least $c$.

The group $PSL(2,\mathbb{R})$ 
of orientation preserving isometries of ${\bf H}^2$
acts transitively on oriented ideal triangles. This implies that 
as $R\to \infty$ and $\delta\to 0$, a 
right angled hyperbolic hexagon $H$ with 
three pairwise non-adjacent sides of length
within $[(R-\delta)/2,(R+\delta)/2]$ 
converges up to the action of $PSL(2,\mathbb{R})$ in the
Hausdorff topology for closed subsets of the 
closed unit disc ${\bf H}^2\cup \partial{\bf H}^2$
to an ideal triangle. 
As a consequence, there are numbers 
$R_0>0,\delta_0>0$ such
that for $R\geq R_0$ and 
$\delta<\delta_0$ an $(R,\delta)$-tight 
pants decomposition of a hyperbolic surface is 
centrally $c/2$-thick.
\end{proof}

\section{Constructing geometrically controlled pants}
\label{constructing}

In this section we construct topological versions
of the pairs of pants which form the 
basic building blocks for our surfaces.
More precisely, for a given closed rank one locally 
symmetric manifold $M$,
we construct maps from a fixed 
pair of pants into $M$ which map 
the boundary circles of the pair of pants to closed
geodesics in $M$. 

The underlying principle for this construction is very
general and applies to any closed negatively curved
manifold with a generic metric (so that the frame flow is
topologically mixing). As we will need to obtain a good
geometric control for these pairs of pants we will however
only work in rank one locally symmetric manifolds.
Control of these geometric invariants in the construction 
is the only part of the argument in this section which
is not taken from \cite{KM12}.

We begin with a geometric construction in the 
hyperbolic plane ${\bf H}^2$. Define a \emph{tripod}
in ${\bf H}^2$ to be an ordered  triple 
$(v_1,v_2,v_3)$ of unit tangent vectors over a 
fixed point $x$ which mutually enclose an 
angle of $2\pi/3$. The tripod defines
an oriented ideal hyperbolic triangle $T$ whose endpoints 
in the ideal boundary $\partial {\bf H}^2$ of ${\bf H}^2$ 
are the endpoints of the geodesic rays $\gamma_{v_i}$ with 
initial velocity $v_i=\gamma_{v_i}^\prime(0)$. 
The orientation of $T$ is defined
by the cyclic order of the vertices $v_i$.

We call the basepoint $x$ of the tripod 
the \emph{center} of the triangle $T$.
The oriented boundary of $T$ is denoted by $\partial T$.
Note that $T$ is preserved by
the cyclic subgroup $\Lambda$ of $PSL(2,\mathbb{R})$ 
of order $3$ which fixes the point $x$ and which acts by
rotation with angle $2\pi/3$ in the tangent plane 
of ${\bf H}^2$ at $x$.

For $R>1$ let $H_{R}\subset T$ be 
the intersection of $T$ with
the half-planes containing $x$ whose boundaries are the 
geodesics through $\gamma_{v_i}(R)$ which are 
perpendicular to $\gamma_{v_i}$. 
Then $H_R$ is a $\Lambda$-invariant oriented
hyperbolic hexagon which is 
not right-angled.
Three sides of $H_{R}$ are contained in 
the sides of the ideal triangle 
$T$, and these sides are called the
\emph{long sides}. The length of each long side equals 
\[L(R)=2R+t(R)\] where
$t(R)\in (-\infty,0)$ 
is uniformly bounded in norm (recall that we require $R>1$.
The number $t(R)$ can explicitly be computed using 
the formulas of hyperbolic trigonometry, see \cite{B92}).
The length of the \emph{short sides} of $H_R$  
(i.e. the three sides which 
intersect the geodesics $\gamma_{v_i}$)
does not exceed $\kappa_0e^{-R}$ where
$\kappa_0>0$ is a universal constant. 
We call the footpoint $x$ of the tripod the \emph{center}
of the hexagon $H_R$ (see Figure C).

Resuming the notations from Section \ref{cannon},
let $G$   
be a simple rank one Lie group of non-compact type and let
$\Gamma<G$ be a torsion free cocompact lattice. 
Let $K<G$ be a maximal compact subgroup, let $\tilde M=G/K$ 
be the corresponding symmetric space and let
$M=\Gamma\backslash \tilde M$ be the locally symmetric space
defined by $\Gamma$.
We always assume that $M$ is equipped with the locally symmetric
metric whose upper curvature bound is $-1$. By 
perhaps passing to a subgroup of 
$\Gamma$ of index 2 we may assume that $M$ is oriented.

Define a \emph{real tripod} in 
$T\tilde M$ (or $TM$)
to be an ordered  triple $(v_1,v_2,v_3)$ of three unit tangent vectors
contained in the same real plane $V\subset T\tilde M$ 
(or $V\subset TM$)
which mutually enclose an angle of $2\pi/3$.
The cyclic order of the tripod defines an orientation of 
$V$. 

%If $P\subset T\tilde M$ is a real plane 
%then there is a cyclic subgroup 
%$\Lambda <G$ of order 3 which fixes the footpoint $x$ of the
%plane $P$, whose differential preserves pointwise the 
%orthogonal complement of the $\mathbb{K}$-linear span of 
%the plane $P$ in $T_x\tilde M$ and 
%which rotates the plane $P$ by a multiple of $2\pi/3$ 
%preserving the orientation. For a real tripod $(v_1,v_2,v_3)$
%in $P$ the group $\Lambda$ acts as a cyclic 
%group of permutations on the set $\{v_1,v_2,v_3\}$.

A real plane $V\subset T\tilde M$ is tangent to a unique oriented
totally geodesic embedded hyperbolic 
plane $H\subset \tilde M$. Thus a real 
tripod $(\tilde v_1,\tilde v_2,\tilde v_3)$ in 
$T\tilde M$ defines
an oriented real ideal triangle $T$ in the
hyperbolic plane $H\subset \tilde M$ containing the 
tripod in its tangent plane. 
%Denote by $\partial T$ the oriented boundary of 
%$T$. 
The group $G$ acts
transitively on these oriented real ideal triangles.
 
For each $R>1$, a real tripod $(\tilde v_1,\tilde v_2,\tilde v_3)$ 
in $T\tilde M$ 
determines an oriented hexagon 
\[H_R(\tilde v_1,\tilde v_2,\tilde v_3)\] in 
the totally geodesic hyperbolic plane
$H\subset \tilde M$ tangent to the tripod. The hexagon
$H_R(\tilde v_1,\tilde v_2,\tilde v_3)$ 
is isometric to the hexagon $H_R
\subset {\bf H}^2$ described above. 
We use the terminology which was introduced
above for hexagons in ${\bf H}^2$ also 
for hexagons in $\tilde M$ defined by real tripods.

Define a \emph{framed real tripod} 
in $T\tilde M$ to be a pair of the form
$((\tilde v_1,\tilde v_2,\tilde v_3),F)$ 
where $(\tilde v_1,\tilde v_2,\tilde v_3)$ is a real tripod
contained in a real plane $V\subset T\tilde M$ 
and where $F$ is a positive $\mathbb{K}$-orthonormal frame 
in the orthogonal complement of the $\mathbb{K}$-span 
$V\otimes \mathbb{K}$ of 
$V$. Here the orientation of $V$
is determined by the tripod.  
The group $G$ acts on framed real tripods and therefore 
we can also consider framed real tripods in $TM$.
In fact, the action of $G$ on such framed real tripods is
simply transitive, so framed real tripods in $T\tilde M$  
can be viewed as points in $G$. However, we will use the specific
geometric meaning of framed real tripods, moreover the geometric
discussion is valid in any closed oriented negatively curved manifold.

A framed 
real tripod $((\tilde v_1,\tilde v_2,\tilde v_3),F)$ 
in $T\tilde M$ determines for each $R>1$ a \emph{framed hexagon} 
$(H_R(\tilde v_1,\tilde v_2,\tilde v_3),F)$. Namely, 
for $i=1,2,3$, the frame $F$ determines 
frames 
 \[F_i\to \gamma_{\tilde v_i}^\prime(R)\to \gamma_{\tilde v_i}(R)\]
in the fibre of the bundle 
${\cal F}$ at the point $\gamma_{\tilde v_i}^\prime(R)$
as follows. 
Choose the first vector of the frame $F_i$  
to be the oriented normal of $\gamma_{\tilde v_i}^\prime(R)$ 
in the oriented hyperbolic 
plane $H\subset \tilde M$ defined by the tripod. 
The remaining ordered vectors of the frame are obtained by parallel
transport of the frame $F$ along $\gamma_{\tilde v_i}$.

By Lemma \ref{paralleltransport},
for all $i,j$ the complement of the first vector
of the frame $F_j$ can also be obtained from 
the complement of the first vector 
of the frame $F_i$ by parallel transport along the boundary of 
$H_{R_i}(\tilde v_1,\tilde v_2,\tilde v_3)$. Moreover, 
the first vector of $F_i$ is 
uniquely determined by the tripod $(\tilde v_1,\tilde v_2,
\tilde v_3)$ and the size parameter $R$.

%Let $w_i$ be a vector tangent to the short side of $H_R$ at
%$\gamma_{\tilde v_i}(R)$. 
%Since $H_R$ is totally geodesic, 
%the parallel transport of $w_i$ 
%along $\gamma_{\tilde v_i}$ is contained in the
%plane $P=T_xH_R$, and it is orthogonal to $v_i$.

Each real tripod
$(\tilde v_1,\tilde v_2,\tilde v_3)$ in $T\tilde M$ 
projects to a real tripod
$(v_1,v_2,v_3)$ in $TM$, and the 
hexagon $H_R(\tilde v_1,\tilde v_2,\tilde v_3)\subset \tilde M$ projects 
to a totally geodesic immersed 
hexagon $H_R(v_1,v_2,v_3)$ 
in $M$ which is uniquely determined by the tripod $(v_1,v_2,v_3)$ and 
the size parameter $R$.
Since the stabilizer in $\Gamma=\pi_1(M)$ of the hyperbolic plane
$H$ tangent to $(\tilde v_1,\tilde v_2,\tilde v_3)$ 
may be non-trivial, the hexagon 
$H_R(v_1,v_2,v_3)$ may have
non-transverse self-intersections. However, the projection
$H_R(\tilde v_1,\tilde v_2,\tilde v_3)\to 
H_R(v_1,v_2,v_3)$ is an isometric immersion.
The geodesics  
$\gamma_{v_i}$ with initial velocity $v_i$ 
will be called the \emph{center geodesics}
of the hexagon $H_R(v_1,v_2,v_3)$. Their initial segments
of length $R$ are contained in $H_R(v_1,v_2,v_3)$.

For a number $\epsilon >0$ define 
an element $A\in SO(m)$ ($m={\rm dim}(\tilde M)$)
to be \emph{$\epsilon$-close to the 
identity} if for every unit vector $v$ the angle between
$v$ and $Av$ is at most $\epsilon$.

The following definition is taken from 
the beginning of Section 4 of \cite{KM12}.
For its formulation, we say that the angle
between two planes $E_1,E_2\subset T_yM$ 
in the tangent space of $M$ at some
point $y$ is at most $\epsilon$ if for each 
vector $0\not=v$ in $E_i$ there is a vector 
$0\not=v^\prime\in E_{i+1}$ with $\angle(v,v^\prime)<\epsilon$
(indices are taken modulo two).

\begin{definition}\label{wellconnected}
For $R>0,\delta>0,\kappa>0$ call the framed 
tripods $((v_1,v_2,v_3),E)$ and
$((w_1,w_2,w_3),F)$ in $TM$ 
\emph{$(R,\delta,\kappa)$-well connected} if 
for each $i=1,2,3$ there is a geodesic segment
$\alpha_i$ connecting $\gamma_{v_i}(R)$ to 
$\gamma_{w_{-i}}(R)$ (indices are taken modulo three) so that 
the following holds true.
\begin{enumerate}
\item The length of $\alpha_i$ is contained in the interval
$[4R-\delta/4,4R+\delta/4]$.
\item The breaking angles of the concatenation
\[\gamma_{w_{-i}}^{-1}\circ\alpha_i\circ \gamma_{v_i}\] 
at the points
$\gamma_{v_i}(R),\gamma_{w_{-i}}(R)$ 
are not bigger than $e^{-\kappa R}/\delta$.
\item Let $E_i$ (or $F_i$) be the frame over 
$\gamma_{v_i}(R)$ (or over $\gamma_{w_{-i}}(R)$) 
defined by the framed tripod as above. 
Let $\hat E_i$ be the parallel transport of $E_i$ along $\alpha_i$.
Then
the element of $SO(m)$ $(m={\rm dim}(M))$ 
which transforms the frame $\hat E_i$ 
to the frame over $\gamma_{w_{-i}}(R)$ which is obtained from 
$F_i$ by replacing the first vector by its negative
is $\delta$-close to the identity.
%angle between the plane
%$TH_R(w_1,w_2,w_3)(\gamma_{w_i}(R))$ and 
%the parallel transport along $\alpha_i$ 
%of the plane
%$TH_R(v_1,v_2,v_3)(\gamma_{v_i}(R))$
%is at most $\epsilon$, and orientations match.
\end{enumerate}
The geodesics $\alpha_i$ are called \emph{good connections}
for the tripods.
\end{definition}

By property (2), the angle between 
$-\gamma_{w_{-i}}^\prime(R)$ and the parallel transport of 
$\gamma_{v_i}^\prime(R)$ along $\alpha_i$ does not exceed
$2e^{-\kappa R}/\delta$.
The third requirement implies that 
the angle between the following two 
real planes in $T_{\gamma_{w_{-i}}(R)}M$ 
is at most $\delta$:
\begin{enumerate}
\item[$\bullet$] The tangent plane of the totally geodesic
hyperbolic plane containing the hexagon 
$H_R(w_1,w_2,w_3)$. 
\item[$\bullet$] The image under parallel transport
along $\alpha_i$ of the tangent plane 
of the hyperbolic plane containing the
hexagon
$H_R(v_1,v_2,v_3)$.
\end{enumerate}

\begin{remark}\label{extra}
For the purpose of this work,
the constant
$\kappa>0$ plays no role- it 
is geared at treating the case of 
locally symmetric spaces of higher rank.
We will only work with $(R,\delta,1)$-well connected
tripods which we call $(R,\delta)$-well connected
in the sequel.
\end{remark}

Recall that 
there is a natural Riemannian
metric on the bundle ${\cal F}$ characterized by
the property that the projection
${\cal F}\to T^1M$ is a Riemannian submersion with 
homogeneous fibre isometric to  
$SO(n-1)$ (or $SU(n-1)$ or $Sp(n-1)$ or
${\rm Spin}(7)$).
We next give a criterion for framed tripods to
be $(R,\delta)$-well connected.

Let $((v_1,v_2,v_3),E),((w_1,w_2,w_3),F)$ be two framed tripods in 
$TM$ and let $R>10,\delta \in (0,1/4)$. 
The framed tripods define frames $V_i\in {\cal F},
W_i\in {\cal F}$ over the tangent vectors
$\gamma_{v_i}^\prime(2R),\gamma_{w_i}^\prime(2R)$.

\begin{lemma}\label{framedconst}
Assume that for each $i$ there is a frame $V_i^\prime$
contained in the $\delta/2$-neighborhood of 
$V_i$ with the following property.
Let $PV_i^\prime\in T^1M$ be the 
base vector of the frame. Then 
the frame obtained from $\Psi^{2r}(V_i^\prime)$ by
replacing the base vector $\Phi^{2R}(PV_i^\prime)$
as well as the first vector of $\Psi^{2R}(V_i^\prime)$
by their negatives is contained in the
$\delta/2$-neighborhood of $W_{-i}$. Then the framed tripods
together with the frames $V_i^\prime$ determine an
$(R,\delta)$-well connected pair of framed tripods
provided $\delta>0$ is sufficiently small.
\end{lemma}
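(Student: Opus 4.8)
The plan is to unwind the definitions and verify the three conditions of Definition~\ref{wellconnected} one at a time, using the $\delta/2$-neighborhood hypothesis on the auxiliary frames $V_i'$ together with the geometry of the hexagons $H_R$.

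First I would set up the good connections. For each $i$, let $v_i'=PV_i'$ be the base vector of $V_i'$; since $V_i'$ lies in the $\delta/2$-neighborhood of $V_i$ in ${\cal F}$, the vector $v_i'$ is within $\delta/2$ (up to a universal constant coming from the Riemannian submersion ${\cal F}\to T^1M$, which is why one needs $\delta$ small) of $\gamma_{v_i}'(2R)$, and $\gamma_{v_i'}(2R)$ is within a comparable distance of $\gamma_{v_i}(4R)$. By hypothesis, $\Psi^{2R}(V_i')$, after negating its base vector and first vector, is $\delta/2$-close to $W_{-i}$; projecting, the endpoint $\gamma_{v_i'}(4R)$ and the reversed endpoint $\gamma_{w_{-i}}(4R)$ are close, and their tangent directions nearly match. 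Define $\alpha_i$ to be the geodesic segment from $\gamma_{v_i}(R)$ to $\gamma_{w_{-i}}(R)$ obtained by concatenating: the subsegment of $\gamma_{v_i}$ from parameter $R$ to $2R$, then $\gamma_{v_i'}$ from $0$ to $2R$, then the short geodesic from $\gamma_{v_i'}(2R)$ to $\gamma_{w_{-i}}(2R)$, then $\gamma_{w_{-i}}$ backwards from $2R$ to $R$ — and finally replace this broken path by the actual geodesic arc with the same endpoints. Because all the breaking angles are $O(\delta)$ and the total length is roughly $4R$, a standard comparison/Anosov-closing argument in the negatively curved space $\tilde M$ shows the geodesic $\alpha_i$ has length in $[4R-\delta/4,4R+\delta/4]$ once $\delta$ is small and $R>10$; that gives condition (1). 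For condition (2), the breaking angle of $\gamma_{w_{-i}}^{-1}\circ\alpha_i\circ\gamma_{v_i}$ at $\gamma_{v_i}(R)$ is the angle between $\gamma_{v_i}'(R)$ and $\alpha_i'(0)$; but the initial segment of $\alpha_i$ shadows $\gamma_{v_i}$ for length $R$, so by exponential convergence of geodesics (curvature $\leq -1$) this angle is $O(e^{-R})$, hence $\leq e^{-R}/\delta$ for $\delta$ small; the same at the other endpoint, shadowing $\gamma_{w_{-i}}$.

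Next I would verify condition (3), the frame-matching. Let $E_i$ be the frame over $\gamma_{v_i}(R)$ associated to the framed tripod as in the construction (first vector the oriented normal of $\gamma_{v_i}'(R)$ in the hexagon's hyperbolic plane, rest by parallel transport of $E$ along $\gamma_{v_i}$), and $\hat E_i$ its parallel transport along $\alpha_i$ to $\gamma_{w_{-i}}(R)$. The frame $V_i\in{\cal F}$ over $\gamma_{v_i}'(2R)$ is, by the frame-flow description, exactly $\Psi^{R}$ applied to the frame $E_i$ carried over $\gamma_{v_i}$; parallel transport is transitive along $\gamma_{v_i}$, so transporting $E_i$ along the first leg of $\alpha_i$ recovers (the spatial part of) $V_i$, which is $\delta/2$-close to $V_i'$. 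By hypothesis the frame-flow image of $V_i'$, after the two sign flips, is $\delta/2$-close to $W_{-i}$; and $W_{-i}$ over $\gamma_{w_{-i}}'(2R)$ is $\Psi^{R}$ of the frame $F_{-i}$ carried along $\gamma_{w_{-i}}$. Running parallel transport back along the last leg of $\alpha_i$ (and accounting for the short middle geodesic, which by condition~(2)-type estimates contributes an $O(\delta)$ rotation), one finds that $\hat E_i$ differs from the sign-flipped $F_{-i}$ by an element of $SO(m)$ that is a bounded sum of $O(\delta)$ contributions, hence $\delta$-close to the identity after relabeling the tolerance. One has to be a little careful that the ``first vector'' normalization is consistent: the first vector of $E_i$ is determined by $\gamma_{v_i}'(R)$ and the hexagon plane, and condition~(2) already forces the transported tangent direction to agree with $-\gamma_{w_{-i}}'(R)$ up to $O(e^{-R})$, while the hexagon plane of the $v$-tripod transported along $\alpha_i$ agrees with that of the $w$-tripod up to $O(\delta)$ — this is exactly the ``negate the first vector'' bookkeeping in Definition~\ref{wellconnected}(3).

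The main obstacle, and the step deserving the most care, is the bookkeeping of the sign flips and of the short connecting geodesic in the middle of $\alpha_i$, together with making the implied constants in ``$\delta/2+\delta/2+O(\delta)\le\delta$'' honest: one needs to fix at the outset a small $\delta_0$ and a large $R_0$ so that all the comparison-geometry error terms (the submersion constant relating ${\cal F}$-distance to angle, the Anosov-closing constant, the $O(e^{-R})$ shadowing terms) are dominated, and then assert the lemma for $\delta<\delta_0$, which is precisely the meaning of the clause ``provided $\delta>0$ is sufficiently small.'' Everything else is a routine, if slightly tedious, application of negative-curvature comparison and the definition of the frame flow $\Psi^t$.
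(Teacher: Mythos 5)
Your proposal follows essentially the same route as the paper's proof: form the broken path $\gamma_{v_i}[R,2R]*(\text{short arc})*\gamma_{PV_i'}[0,2R]*(\text{short arc})*\gamma_{w_{-i}}[2R,R]$, straighten it to a geodesic $\alpha_i$, use negative-curvature comparison (via auxiliary triangles such as the ones the paper builds with $\xi_i,\zeta_i,\eta_i$) to show the breaking angles at the endpoints are $O(e^{-R})$ and that the length is within the required tolerance of $4R$, and then argue that parallel transport of the frame along $\alpha_i$ differs from the sign-flipped target frame by an $O(\delta)$ rotation. The only point worth flagging is that you quietly drop the first short connecting arc (from $\gamma_{v_i}(2R)$ to the basepoint of $\gamma_{PV_i'}$) from your concatenation list even though you invoke its smallness in the preceding sentence — it should be listed explicitly, as the paper does — but this is a minor slip in exposition rather than a gap in the argument.
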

\begin{proof} Using the notation from the lemma,
construct a piecewise geodesic $\hat \alpha_i$ connecting
$\gamma_{v_i}(R)$ to $\gamma_{w_{-i}}(R)$ as a 
concatenation of the following
geodesic arcs.
\begin{enumerate}
\item[$\bullet$] 
$\gamma_{v_i}[R,2R]$,
\item[$\bullet$] an arc of length 
at most $\delta/2$ connecting 
$\gamma_{v_i}(2R)$ to $\beta_i(0)$,  
\item[$\bullet$] the geodesic $\beta_i$, 
\item[$\bullet$] an arc of length
at most $\delta/2$ connecting $\beta_i(2R)$ to 
$\gamma_{w_{-i}}(2R)$,
\item[$\bullet$]
the inverse of $\gamma_{w_{-i}}[R,2R]$.
\end{enumerate}
Let $\alpha_i$ be the geodesic in $M$ which is
homotopic to $\hat \alpha_i$ with fixed endpoints.
The length of $\alpha_i$ is contained in the interval
$[4R-\delta,4R+\delta]$.
We claim that
the angle between $\alpha_i$ and $\gamma_{v_i}$,
$\gamma_{w_{-i}}^{-1}$ at the endpoints 
$\gamma_{v_i}(R),\gamma_{w_{-i}}(R)$ is at most 
$\kappa e^{-R}$ where $\kappa >0$ does not depend on $R$.

To this end choose lifts of the geodesics $\gamma_{v_i},
\gamma_{w_{-i}}, \beta_i$ to $\tilde M$, say geodesics
\[\tilde \gamma_{v_i},\tilde \gamma_{w_{-i}},\tilde \beta_i,\]
so that the distance between the tangents 
$\tilde \gamma_{v_i}^\prime(2R),\tilde \beta_i^\prime(0)$
and between the tangents $\tilde \beta_i^\prime(2R)$, 
$-\tilde \gamma_{w_{-i}}^\prime(2R)$ 
is at most $\delta$. Let $\xi_i$ be the geodesic 
connecting $\tilde \gamma_{v_i}(2R)$ to 
$\tilde \beta_i(R)$.
Hyperbolic trigonometry \cite{B92} 
and comparison \cite{CE75} 
shows that the angle between $\tilde \gamma_{v_i}^\prime(2R)$ and
the tangent of  $\xi_i$  at $\tilde\gamma_{v_i}(2R)$ 
is at most $c_0\delta$ where
$c_0>0$ is a universal constant.  Moreover, the
angle between $\tilde \beta_i^\prime$ and 
$\xi_i^\prime$ at $\tilde \beta_i(R)$ is at most
$c_0e^{-R}$.

Let $\zeta_i$ be the geodesic connecting 
 $\tilde \gamma_{v_i}(R)$ to 
$\tilde \beta_i(R)$. 
Use comparison for the geodesic triangle with vertices
$\tilde \gamma_{v_i}(R),\tilde \gamma_{v_i}(2R),\tilde \beta_i(R)$ 
to conclude that the angle at $\tilde \gamma_{v_i}(R)$
between $\tilde \gamma_{v_i}^\prime$ and the tangent of 
$\zeta_i$ is at most $c_1e^{-R}$ where again,
$c_1\geq c_0$ is a universal constant. The
angle at $\tilde \beta_i(R)$ between $\tilde \beta_i^\prime$
and the tangent of $\zeta_i$ does not exceed 
$c_1e^{-R}$ as well. 

Apply this reasoning to the geodesics $\tilde \gamma_{w_{-i}}$ and 
the inverse of $\beta_i[R,2R]$ to control the tangents at the endpoints
of the geodesic $\eta_i$ connecting $\tilde \gamma_{w_{-i}}(R)$ to 
$\tilde \beta_i(R)$. We find that the angle at $\tilde \beta_i(R)$ 
between $\zeta_i$ and the inverse of $\eta_i$ does not exceed
$c_2e^{-R}$ for a universal constant $c_2>0$.
Thus by triangle comparison, the angle at $\tilde \gamma_{v_i}(R)$ between
$\tilde \gamma_{v_i}^\prime$ and the tangent of the geodesic 
connecting $\tilde\gamma_{v_i}(R)$ to $\tilde \gamma_{w_{-i}}(R)$ is 
a most $c_2e^{-R}$. The above 
claim now follows from this and symmetry.

The statement about the parallel transport is derived in the same
way.
\end{proof}

Lemma \ref{framedconst} is the method 
for the construction of the building blocks for our surfaces, namely
pairs of pants immersed in $M$. 
In the remainder of this section 
we explain why it gives rise to pairs of pants. 
We also collect some first properties of these pairs of pants which will 
be used to get some geometric control as $R,\delta$ vary. 
In Section \ref{twisted} and Section \ref{skewpants}
we will determine suitable sizes for $R,\delta$ using this
a-priori geometric control to establish a sufficient
condition for incompressibility of surfaces glued from pants.

%\begin{remark} One may ask whether the statement of the theorem
%from the introduction generalizes to locally symmetric 
%manifolds of finite volume. Although we believe that this %is true
%in many cases,
%the strategy of Kahn and Markovic can not be adopted to this
%situation in any obvious way. The difficulty is illustrated by
%Proposition \ref{mixing}: Although exponential decay of matrix
%coefficients holds true in the finite volume case, 
%the proposition is not valid as stated.

%On the other hand, 
%the restriction of the frame flow to a suitably chosen 
%compact invariant topologically 
%transitive set $K$ has all the properties used so far.
%However, often such a set $K$ is a Cantor set and therefore we can not hope to 
%glue pants with cuffs in this set to a closed incompressible surface.
%\end{remark}

Let $((v_1,v_2,v_3),E)$ and $((w_1,w_2,w_3),F)$ be 
$(R,\delta)$-well connected framed tripods in $TM$ with foot-points
$p,q\in M$. Let as before 
$H_R(v_1,v_2,v_3)$ and $H_R(w_1,w_2,w_3)$ be the 
totally geodesic immersed hyperbolic hexagons defined by
these tripods. We use the notations as in the definition of 
well connected tripods.

For each $i$ let $\beta_i$ be the 
geodesic arc in the hexagon
$H_R(v_1,v_2,v_3)$ which connects
the point $\gamma_{v_i}(R)$ to the point 
$\gamma_{v_{i+1}}(R)$ and define in the same way a geodesic
arc $\eta_i$ in $H_R(w_1,w_2,w_3)$ 
connecting $\gamma_{w_i}(R)$ to 
$\gamma_{w_{i+1}}(R)$ as shown in Figure C.
\begin{figure}[ht]
\begin{center}
\includegraphics[width=0.5\linewidth]{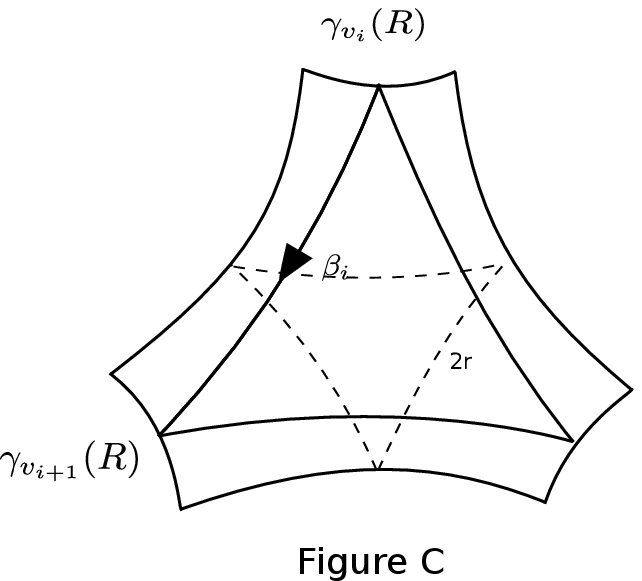}
\end{center}
\end{figure}

There is a number $q>\delta$ not depending on $R$ such that
the length $L(R)$ of these geodesics is contained in the
interval $[2R-q,2R]$.
Hyperbolic trigonometry shows that
the angle at $\gamma_{v_i}(R)$ of the triangle with 
vertices $p,\gamma_{v_i}(R),\gamma_{v_{i+1}}(R)$ is not bigger than
$\kappa_1e^{-R}$ 
where $\kappa_1>0$ is a universal constant.

Thus for each $i\in \{1,2,3\}$ 
the concatenation
\[\hat \gamma_i= \alpha_i^{-1}\circ \eta_i^{-1}\circ \alpha_{i+1}\circ
\beta_i\] 
(read from right to left and indices are taken modulo three) 
is a piecewise geodesic loop with $4$ breakpoints 
of breaking angle at most 
$\kappa_2e^{-R}$ where $\kappa_2>\kappa_1$ is a universal constant.
The lengths of the geodesic segments which form these
piecewise geodesic loops are at least $2R-\kappa_3$
where $\kappa_3>0$ is a universal constant.
The piecewise geodesic $\hat \gamma_i$ inherits from the boundary
orientation
of the oriented hexagons in the construction a natural orientation.

Standard comparison implies that
for sufficiently large $R$ the piecewise geodesic loop 
$\hat \gamma_i$ is freely homotopic
to a closed geodesic $\gamma_i$ in $M$.
The Hausdorff distance between the tangent line
of $\hat \gamma_i$ and the tangent line of $\gamma_i$ is 
at most $\kappa_4e^{-R}$ where once again, $\kappa_4>0$ does not depend on $R$.
By increasing $R$ we may assume that $\kappa_4e^{-R}<\delta/4$.
Then the lengths $\ell(\gamma_i)$ of the 
geodesics $\gamma_i$ satisfy
\[\ell(\gamma_i)\in 
[2L(R)+8R-\delta,2L(R)+8R+\delta].\] 
The geodesics $\gamma_i$ $(i=1,2,3)$ are pairwise
distinct, and there is an oriented pair of pants $P$ and an
incompressible map $f_P:P\to M$ which maps 
the boundary geodesics of $P$ 
onto the three geodesics $\gamma_1,\gamma_2,\gamma_3$.

The homotopy class of the map $f$ 
as well as the orientation of $P$ are
determined by the tripods $(v_1,v_2,v_3)$,
$(w_1,w_2,w_3)$ and the good connections $\alpha_i$. 
Note however that the tripods are not determined
by the homotopy class of $f$.
Following \cite{KM12} we call  $f(P)$ an 
\emph{$(R,\delta)$-skew pants}, and we identify two such
skew pants if they are defined by homotopic maps.

Let $\gamma$ be a boundary geodesic of a skew pants.
Then $\gamma$ is the quotient of a geodesic line $\tilde \gamma$
in $\tilde M$ under a hyperbolic isometry.
Such an isometry is determined by its translation length
(which is the length of $\gamma$) and a rotational 
part which is an element in $SO(n-1)$ for $G=SO(n,1)$,
an element in $SU(n-1)$ for $G=SU(n,1)$, an element in 
$Sp(n-1)$ for $G=Sp(n,1)$ and an element in ${\rm Spin}(7)$ for
$G=F_4^{-20}$. 
We call this rotational part 
the \emph{monodromy} of $\gamma$.
As before, there is a notion of being 
$\epsilon$-close to the identity for such a monodromy map.
The following proposition is immediate from the above construction
and from uniform continuity of parallel transport along 
piecewise geodesics.

\begin{proposition}\label{boundaryskew}
%\begin{enumerate}\item
There is a number $\chi>0$ with the following property.
If $\gamma$ is a boundary curve of an $(R,\delta)$-skew-pants 
then
the monodromy of $\gamma$ is $\chi\delta$-close to the identity.
\end{proposition}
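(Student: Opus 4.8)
The plan is to trace the monodromy through the explicit construction of the boundary geodesic $\gamma_i$ and to observe that every elementary operation that enters this construction either preserves the relevant frame exactly (by Lemma \ref{paralleltransport}) or introduces an error controlled linearly by $\delta$. Recall that $\gamma_i$ is freely homotopic to the piecewise geodesic loop $\hat\gamma_i=\alpha_i^{-1}\circ\eta_i^{-1}\circ\alpha_{i+1}\circ\beta_i$, where $\beta_i,\eta_i$ are sides of the totally geodesic hexagons $H_R(v_1,v_2,v_3)$, $H_R(w_1,w_2,w_3)$ and $\alpha_i,\alpha_{i+1}$ are the good connections. I would fix a lift $\tilde\gamma_i$ of $\gamma_i$ to $\tilde M$ together with the corresponding lift of $\hat\gamma_i$, and compare parallel transport around the deck transformation $\psi$ generating the cyclic stabilizer of $\tilde\gamma_i$ with parallel transport of the frame data carried along $\hat\gamma_i$ inside the two hexagons.

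The key steps, in order, are as follows. \emph{First}, along each hexagon side $\beta_i$ (resp.\ $\eta_i$) the frame $E_i$ (resp.\ $F_i$) defined by the framed tripod — whose first vector is the oriented normal to the center geodesic in the hyperbolic plane $H$ carrying the hexagon — is transported by Lemma \ref{paralleltransport} exactly to $E_{i+1}$ (resp.\ $F_{i+1}$), since $\beta_i$ is a loop-free arc in the totally geodesic real plane $H$ and parallel transport there fixes the $\mathbb{K}$-orthogonal complement of $TH$ and rotates within $TH\otimes\mathbb{K}$ in the prescribed way; so the hexagon contributes no $\delta$-error to the rotational part, only the motion predicted by the real ideal triangle geometry. \emph{Second}, along each good connection $\alpha_i$, property (3) of Definition \ref{wellconnected} says precisely that parallel transport of $\hat E_i$ along $\alpha_i$ differs from the frame over $\gamma_{w_{-i}}(R)$ obtained from $F_i$ by flipping the first vector, by an element of $SO(m)$ that is $\delta$-close to the identity. \emph{Third}, composing these: going once around $\hat\gamma_i$, the total parallel transport of the adapted frame equals the composition of two exact hexagon contributions and two $\alpha$-contributions each $\delta$-close to identity, hence is $C\delta$-close to the rotation that parallel transport would be if the two hexagons were glued along genuinely coincident real planes — and that idealized rotation is trivial on the $\mathbb{K}$-orthogonal complement of the tangent line of $\gamma_i$, i.e.\ it represents the monodromy of an honest closed geodesic lying in a totally geodesic hyperbolic plane, which has trivial monodromy. \emph{Fourth}, pass from $\hat\gamma_i$ to $\gamma_i$: since the Hausdorff distance between the tangent lines of $\hat\gamma_i$ and $\gamma_i$ is at most $\kappa_4 e^{-R}<\delta/4$, parallel transport along $\gamma_i$ differs from parallel transport along $\hat\gamma_i$ by a further factor that is $O(e^{-R})=O(\delta)$-close to the identity, by uniform continuity of parallel transport along piecewise geodesics of bounded geometry (the segment lengths are $\geq 2R-\kappa_3$ and the breaking angles $\leq\kappa_2 e^{-R}$). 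Combining, the monodromy of $\gamma_i$ is $\chi\delta$-close to the identity for a universal $\chi>0$.

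The main obstacle I expect is bookkeeping of the \emph{base point and the first frame vector} rather than the estimates themselves: the frames $E_i$, $F_i$ are anchored to the center geodesics $\gamma_{v_i},\gamma_{w_{-i}}$, whereas the monodromy is defined relative to $\tilde\gamma_i$ and its perpendicular complement, so one must check that the identification of these two reference frames (carried out along the short transition arcs $\zeta_i,\eta_i$ used already in the proof of Lemma \ref{framedconst}) costs only $O(\delta)$, and one must be careful that the ``flip the first vector'' conventions in Definition \ref{wellconnected}(3) and in the hexagon construction match up around the full loop so that the flips cancel in pairs. Once this is pinned down, the proposition follows as stated from the three building estimates above together with the triangle inequality for ``$\epsilon$-close to the identity'' (a composition of maps that are $\epsilon_1$- and $\epsilon_2$-close to the identity is $(\epsilon_1+\epsilon_2)$-close, up to a harmless universal constant coming from the non-abelian nature of $SO(m)$), so I would only sketch that and leave the routine trigonometric continuity estimates to references \cite{B92,CE75} as the paper does elsewhere.
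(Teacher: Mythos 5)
Your proof is correct and takes essentially the same approach as the paper, which dispatches Proposition \ref{boundaryskew} in a single sentence as ``immediate from the above construction and from uniform continuity of parallel transport along piecewise geodesics.'' You have simply spelled out the details that the paper leaves implicit: exact transport along the hexagon sides via Lemma \ref{paralleltransport}, the $\delta$-error along the good connections from Definition \ref{wellconnected}(3), cancellation of the two frame flips, and the $O(e^{-R})=O(\delta)$ cost (using the standing hypothesis $\kappa_4 e^{-R}<\delta/4$) of replacing the broken geodesic $\hat\gamma_i$ by the closed geodesic $\gamma_i$.
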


\section{Twisted bands}\label{twisted}

Consider again a rank one symmetric space
$\tilde M$ of curvature contained in the interval
$[-4,-1]$ and dimension at least three.
If the curvature of $\tilde M$ is constant then we require that
this constant equals $-1$.
The goal is to 
introduce a geometric model for the thin parts of the
pairs of pants constructed in Section \ref{constructing} 
in a compact quotient $M=\Gamma\backslash \tilde M$ of $\tilde M$.
Such a model is a 
twisted ruled band as defined below. The geometric realization
of an $(R,\delta)$-skew pants will consist of three ruled surfaces
which are exponentially close such twisted ruled bands.
These ruled surfaces are attached to two ruled geodesic triangles
which are exponentially close to the center triangle of an
ideal immersed hyperbolic triangle as defined in 
the next paragraph.

Let ${\bf H}^2$ be the hyperbolic plane and let 
$T\subset {\bf H}^2$ be an ideal hyperbolic triangle.
The projection of an ideal vertex of $T$ 
to the opposite side $\gamma$ 
is a special point on $\gamma$. The three special points
on the three sides of $T$ are the vertices of  
an equilateral hyperbolic triangle 
$T_0\subset T$ which we call the \emph{center triangle}.
Let $2r >0$ be the length of the sides of this triangle.
This length does not depend on $T$. The number $r$ will 
be used throughout the rest of this section.

Let $R\geq 10$ and let 
$\gamma:[-R,R]\to \tilde M$ be any geodesic arc of length $2R$.
Let 
\[V\to \gamma\] be the subbundle of the restriction of
$T\tilde M$ to $\gamma$ 
whose fibre at $\gamma(t)$ equals the $\mathbb{K}$-orthogonal
complement $(\gamma^\prime(t))_{\mathbb{K}}^\perp$ 
of $\gamma^\prime(t)$.
%which contains all vectors which are perpendicular
%to $\gamma^\prime$ and which span together with $\gamma^\prime$ 
%a plane of curvature $-1$. 
%i.e. the pull-back of $V$ to 
%$\gamma^\prime$ is the restriction to $\gamma^\prime$ 
%of the real bundle as defined
%in Section 4. 
The bundle $V$ is 
invariant under parallel transport along $\gamma$.

Let $w_{-R}\in V_{\gamma(-R)},
w_R\in V_{\gamma(R)}$ be unit vectors. Let 
$\delta\in [0,\pi/4]$ and 
assume that the non-oriented 
angle between $w_R$ and the parallel transport
of $w_{-R}$ along $\gamma$ equals $\delta$. This
does not depend on the orientation of $\gamma$.

%Let 
%\[\Theta(\gamma)\] be the isometry of $\tilde M$ which 
%preserves $\gamma$ pointwise and whose differential
%acts as the reflection $w\to -w$ in the bundle $V$ and as the
%identity in the orthogonal complement of $V$.
%Note that if $G=SO(n,1)$ for even $n$ then $\Theta(\gamma)$
%is orientation reversing.

Let $\nu_{-R}$ and $\nu_R$ be the geodesic 
connecting $\exp(-rw_{-R})$ to $\exp(rw_{-R})$ and 
connecting $\exp(-rw_R)$ to $\exp(rw_R)$, respectively.
We assume that the geodesics $\nu_{-R},\nu_R$ are parametrized
by arc length on $[-r,r]$. Then 
\[\nu_{-R}(0)=\gamma(-R),\,\nu_R(0)=\gamma(R).\]
Moreover, $\nu_{-R},\nu_R$ meet $\gamma$
orthogonally at $\gamma(-R),\gamma(R)$.

Let $\ell>R$ be such that the distance
between $\nu_{-R}(r)$ and $\nu_R(r)$ equals $2\ell$.
For each $s\in [-r,r]$ connect $\nu_{-R}(s)$ to $\nu_R(s)$ by a geodesic
$\alpha_s$ parametrized proportional
to arc length on $[-\ell,\ell]$. Up to parametrization, 
we have $\alpha_{0}=\gamma$.
The map 
\[\alpha:[-r,r]\times [-\ell,\ell]\to \tilde M\] defined by 
$\alpha(s,t)=\alpha_s(t)$ is an embedding. Its image is a ruled
surface $E$ with induced orientation and oriented 
boundary $\alpha_{r}^{-1}\circ \nu_R\circ  \alpha_{-r}
\circ \nu_{-R}^{-1}$ (read from right to left).
We call this surface a \emph{$\delta$-twisted ruled band of size $2R$}
with central geodesic $\gamma$, and we call $\alpha_{-r},\alpha_r$ the
\emph{long sides} of the band. We also say that $E$ 
is a ruled band of twisting number $\delta$.
The map $\alpha$ is called the \emph{standard parametrization}
of the twisted ruled band $E$.

Let $x=\gamma(0)$ be the midpoint of $\gamma$ and let 
$\hat w_{-R},\hat w_R\in T_x\tilde M$ 
be the images of $w_{-R},w_R$ under 
parallel transport along $\gamma$.
The angle between $\hat w_{-R},\hat w_R$ 
equals $\delta$. Let $\hat w$ be the midpoint 
between $\hat w_{-R},\hat w_R$ in 
the fibre $V_x$ of $V$ over $x$, i.e. the
midpoint of the unique shortest arc in the unit sphere 
in $V_x$ connecting $\hat w_{-R},\hat w_R$. Let 
$\hat \nu$ be the geodesic in $\tilde M$ through $x$ which is 
tangent to $\hat w$. The geodesic $\hat \nu$ is
orthogonal to $\gamma$. There is a unique
totally geodesic hyperbolic plane 
$H\subset \tilde M$ of curvature $-1$
containing both $\gamma$ and $\hat \nu$.
%There is an isometry $\Lambda$ which 
%preserves $\hat \nu$ pointwise and acts as a reflection
%in the bundle of real direction over $\hat \nu$.

\begin{lemma}\label{midpoint}
A subsegment of $\hat \nu$ is the 
unique geodesic arc in $\tilde M$ which is orthogonal
to both $\alpha_{-r},\alpha_r$, and it is contained in the ruled surface $E$.
\end{lemma}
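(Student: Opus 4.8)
The plan is to exploit the symmetry of the configuration. The key observation is that the whole data defining $E$ — the geodesic $\gamma$, the endpoint vectors $w_{-R}, w_R$, and the width $r$ — is symmetric under the isometry $\sigma$ of $\tilde M$ that reverses $\gamma$ about its midpoint $x = \gamma(0)$ and acts on the fibre $V_x$ as the reflection through the midpoint $\hat w$ of the shortest arc joining $\hat w_{-R}$ to $\hat w_R$. Concretely, I would first check that such an isometry exists: since $G$ acts transitively on pointed geodesics with a chosen plane, and the stabilizer of $\gamma$ pointwise contains the rotations of $V_x$, one can compose the geodesic-reversal at $x$ with a fibre rotation so that the resulting isometry $\sigma$ fixes $x$, sends $\gamma'(0)$ to $-\gamma'(0)$, and interchanges the parallel transports of $w_{-R}$ and $w_R$ to $x$ (this uses that parallel transport along $\gamma$ intertwines with $\sigma$ because $\sigma$ fixes $\gamma$ setwise). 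The geodesic $\hat\nu$ through $x$ tangent to $\hat w$ is then precisely the fixed-point set component of $\sigma$ through $x$, together with the direction $\gamma'(0)$; more precisely $\sigma$ fixes $x$ and its differential fixes $\hat w$ and negates $\gamma'(0)$, so $\sigma$ fixes $\hat\nu$ pointwise near $x$ and fixes $H$ setwise.

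Next I would verify that $\sigma$ preserves the ruled band $E$ and swaps its two long sides $\alpha_{-r}$ and $\alpha_r$. Indeed, $\sigma$ sends $\nu_{-R}$ to $\nu_R$ (it swaps $\gamma(-R)\leftrightarrow\gamma(R)$ and the vectors $w_{-R}\leftrightarrow w_R$ after transport, hence the orthogonal geodesics through those points), and it sends the parametrizing geodesic $\alpha_s$ joining $\nu_{-R}(s)$ to $\nu_R(s)$ to the geodesic joining $\nu_R(\pm s)$ to $\nu_{-R}(\pm s)$; tracking the sign of $s$ carefully (the reflection on $V_x$ through $\hat w$ reverses orientation of the arc, hence $s\mapsto -s$ on one side but one must check both endpoints transform consistently) one finds $\sigma(\alpha_s) = \alpha_{-s}$ up to orientation-reversal of the parameter, so $\sigma(E)=E$ and $\sigma(\alpha_{\pm r})=\alpha_{\mp r}$. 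In particular $\sigma(E)=E$ gives that $E$ is invariant, and since $\sigma$ is an involutive isometry, its fixed-point set $\mathrm{Fix}(\sigma)$ is a totally geodesic submanifold. The component of $\mathrm{Fix}(\sigma)$ through $x$ is a geodesic (it contains $\hat\nu$ and the differential of $\sigma$ at $x$ has $+1$-eigenspace exactly $\mathbb{R}\hat w$ in the relevant plane, while on the rest it has no further fixed vectors by the genericity of $\delta\neq 0$), namely $\hat\nu$ itself.

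I would then produce the required orthogonal arc. Let $c:[0,L]\to \tilde M$ be the unique shortest geodesic arc realizing the distance between $\alpha_{-r}$ and $\alpha_r$ — uniqueness holds by negative curvature (convexity of distance between disjoint geodesics in a CAT$(-1)$, equivalently Hadamard, space), and this arc meets both $\alpha_{\pm r}$ orthogonally by the first-variation formula. Since $\sigma$ is an isometry swapping $\alpha_{-r}$ and $\alpha_r$, it sends $c$ to another shortest arc between the same two geodesics, hence by uniqueness $\sigma(c)=c$ (as a set, with orientation reversed), so $c$ is pointwise fixed by $\sigma^2=\mathrm{id}$ — more to the point, $c$ is invariant under $\sigma$, and an invariant geodesic of an involutive isometry that reverses it has its midpoint fixed; the midpoint of $c$ therefore lies in $\mathrm{Fix}(\sigma)$. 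Combined with the fact that $c$ is orthogonal to $\alpha_r$, which $\sigma$ maps to $\alpha_{-r}$, one deduces $c$ lies in $\mathrm{Fix}(\sigma)$ entirely, hence $c\subset \hat\nu$. Finally, $\hat\nu\cap E$: since $\hat\nu$ passes through $x=\alpha_0(0)\in E$ and is the fixed geodesic, and $E$ is $\sigma$-invariant with $x$ its "center," a short computation (or the observation that $\hat\nu$ is tangent to $E$ at $x$ because $\hat w\in T_xE$, the latter since $\hat w$ lies in the plane spanned by the tangent to $\alpha_0=\gamma$ and the tangent to $\nu$-direction which spans $T_xE$) shows the relevant subsegment of $\hat\nu$ stays in $E$.

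The main obstacle I expect is the bookkeeping in the second paragraph: precisely pinning down the isometry $\sigma$ and checking $\sigma(\alpha_s)=\alpha_{-s}$, i.e. that the reflection chosen on the normal fibre $V_x$ is exactly the one compatible with the ruling. The subtlety is that $\nu_{-R}, \nu_R$ are parametrized symmetrically on $[-r,r]$ but the twisting means the "$s$" coordinate on the two ends is measured against parallel-transported frames that differ by the angle $\delta$; one must confirm that reversal-about-$\hat w$ is exactly the map making the whole ruled band symmetric, which is really the content of $\hat w$ being the \emph{midpoint} of the arc from $\hat w_{-R}$ to $\hat w_R$. Once that symmetry is nailed down, everything else is a formal consequence of "fixed-point sets of isometries are totally geodesic" plus uniqueness of shortest connecting geodesics in negative curvature.
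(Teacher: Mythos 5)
Your overall strategy is the paper's strategy: find an involutive isometry $\sigma$ of $\tilde M$ that preserves $E$ and swaps the two long sides, so that the unique common perpendicular between $\alpha_{-r}$ and $\alpha_r$ is forced into the fixed‑point locus, which is $\hat\nu$. For $G=SO(n,1)$ the isometry you describe (negate $\gamma'(0)$, fix $\hat w$, negate $\hat w^\perp$ in $V_x$) is exactly the reflection $\Psi$ the paper uses, and the rest of your argument goes through.

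The gap is in the existence of $\sigma$ for the non‑real groups. You justify it by saying that ``the stabilizer of $\gamma$ pointwise contains the rotations of $V_x$,'' and then compose geodesic reversal with a ``fibre rotation.'' That is false for $SU(n,1)$, $Sp(n,1)$, and $F_4^{-20}$: the isotropy subgroup of $K$ fixing a unit vector acts on $V_x$ only through $SU(n-1)$ (respectively $Sp(n-1)$, $\mathrm{Spin}(7)$), which is a proper subgroup of $SO(V_x)$ because every element must commute with the $\mathbb K$--structure. The map you want on $V_x$ — fix $\hat w$, negate $\hat w^\perp$ — fixes a one‑dimensional real subspace, hence is not $\mathbb C$-linear (let alone $\mathbb H$- or $\mathbb O$-compatible). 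Concretely, in $\mathbb C\mathbf H^2$ one has $V_x=\mathbb C\hat w$, and a unitary rotation of $V_x$ sending $\hat w_{-R}=e^{-i\delta/2}\hat w$ to $-\hat w_R=-e^{i\delta/2}\hat w$ must be multiplication by $e^{i(\pi+\delta)}$, which then sends $\hat w_R$ to $e^{i(\pi+3\delta/2)}\hat w\ne-\hat w_{-R}$ unless $\delta\in\pi\mathbb Z$. So no holomorphic isometry does what you claim, and the composite $\sigma$ you wrote down is not in $G$. The isometry does exist, but only in the component of the full isometry group given by the anti‑holomorphic (resp. anti‑quaternionic, anti‑octonionic) maps, and this is precisely what the paper's case analysis supplies: it builds the involution from complex conjugation (an anti‑holomorphic isometry fixing the totally real plane $H$ pointwise) composed with a holomorphic reflection. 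Until you verify the existence of such a conjugation and check that the composite has the required differential at $x$, your proof does not cover $SU(n,1)$, $Sp(n,1)$, $F_4^{-20}$ — which are the main cases of interest in the paper.
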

\begin{proof} We begin with showing the lemma in the case 
$G=SO(n,1)$.
Then there is an isometric involution 
$\Psi$ of $\tilde M={\bf H}^n$ which 
fixes $\hat \nu$ pointwise
and whose differential 
acts as a reflection in the orthogonal complement of 
the tangent line of $\hat \nu$. 
The isometry $\Psi$ preserves the geodesic $\gamma$ and 
exchanges its endpoints. Moreover, we have
\[d\Psi(\hat w_{-R})=\hat w_R.\] 
Since isometries commute
with parallel transport, this implies that
$d\Psi(w_{-R})=w_R$. As a consequence,  
$\Psi$ preserves the ruled surface $E$ and 
acts as a reflection on the sides $\alpha_{-r},\alpha_r$. 
Since the fixed point set of $\Psi$ equals $\hat \nu$,  
there is a subarc $\nu$ of $\hat \nu$ which 
is contained in $E$. This subarc 
is the shortest geodesic  
between $\alpha_{-r}$ and $\alpha_r$, and it
meets the 
geodesics $\alpha_{-r},\alpha_r$ orthogonally at its endpoints.

%If $G=SU(n,1)$ and if $\hat w_{-R},\hat w_R,\gamma^\prime(0)$
%are contained in the tangent space of a totally real totally
%geodesic embedded hyperbolic subspace of $\tilde M$ 
%then the same holds true for the $\delta$-twisted ruled band $E$.
%Thus the result for real hyperbolic spaces applies
%and shows the lemma.

Next consider the case that $\tilde M=\mathbb{C}{\bf H}^2$
equals the complex hyperbolic plane. With respect to 
a suitable choice of complex coordinates in the unit ball
in $\mathbb{C}^2$, complex conjugation is an 
anti-holomorphic isometry $\Theta$ 
of $\mathbb{C}{\bf H}^2$ which fixes the hyperbolic plane
$H$ containing $\gamma$ and $\hat \nu$ 
pointwise and acts as a reflection in the normal bundle of 
$H$. Thus we have
\[d\Theta(\hat w_{-R})=\hat w_R.\] 

Let $\sigma$ be the geodesic symmetry at $\gamma(0)$.
Then $\sigma\circ \Theta$ preserves both $\gamma$ and $\hat \nu$,
and it exchanges the endpoints of $\gamma$.
Moreover, we have  
$d(\sigma\circ \Theta)(\hat w_{-R})=-\hat w_R$.
Thus $\sigma\circ \Theta$ 
exchanges
the geodesics $\nu_{-R}$ and $\nu_R$. In particular, it
preserves the $\delta$-twisted ruled band $E$.
As before, this implies the statement of the lemma. 

If more generally $G=SU(n,1)$ for $n\geq 3$ then 
the real hyperbolic plane $H$ is contained in a unique
totally geodesic complex hyperbolic plane
$V=\mathbb{C}{\bf H}^2\subset \tilde M$. The anti-holomorphic
involution of $V$ 
which fixes $H$ pointwise and acts as a reflection in the normal bundle
of $H$ in $V$ can be extended to 
an anti-holomorphic isometry $\Theta$ of $\tilde M$
which fixes the point $\gamma(0)$ and 
maps the orthogonal projection of $\hat w_{-R}$ 
into $T_{\gamma(0)}V^\perp\subset T_{\gamma(0)}\tilde M$ 
to its negative, i.e. to the orthogonal projection of
$\hat w_R$.
The argument for the case $\tilde M=\mathbb{C}{\bf H}^2$ 
applies and yields the statement of the lemma in this case
as well.

The case $G=Sp(n,1), F_4^{-20}$ is completely analogous 
to the case $G=SU(n,1)$ and will be 
omitted.
\end{proof}

In the sequel we call the subsegment $\nu_0$ of the 
geodesic $\hat \nu$ as in Lemma \ref{midpoint} whose
endpoints are contained in the two long sides
of $E$ the \emph{seam} of $E$. This is consistent with 
the terminology used in Section 3.

\begin{figure}[ht]
\begin{center}
\psfrag{$\nu_{-R}$}{$\nu_{-R}$}
\psfrag{$nu_R$}{$\nu_{R}$}
\psfrag{$\nu_0$}{$\nu_0$}
\psfrag{$\gamma$}{$\gamma$}
\includegraphics[width=0.7\linewidth]{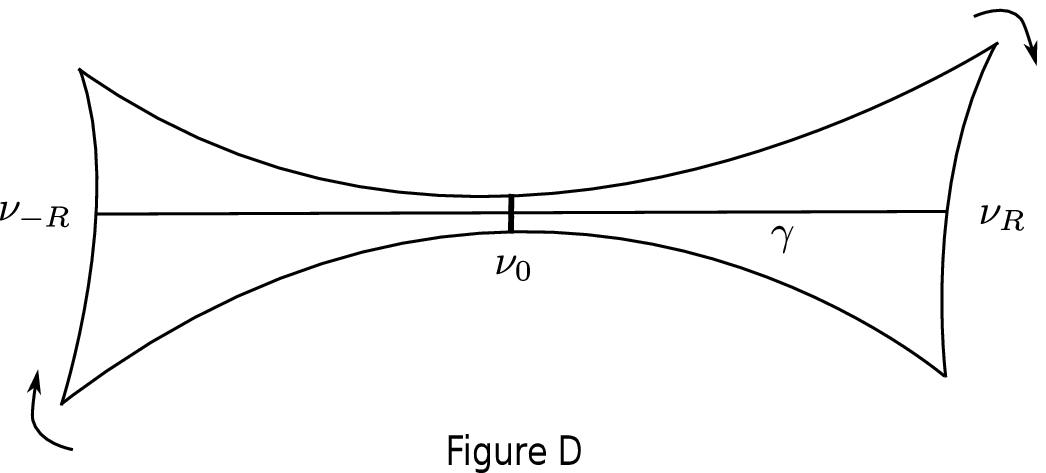}
\end{center}
\end{figure}

\begin{remark}\label{remarkone}
The proof of Lemma \ref{midpoint} also 
implies the following. 
\begin{enumerate}
\item  
For a $\delta$-twisted 
ruled band $E$, there is a unique totally geodesic
real hyperbolic plane $H(E)\subset \tilde M$ containing both
the central geodesic $\gamma$ and the seam $\nu_0$
of $E$. 
\item A $\delta$-twisted ruled band $E$ consists of two 
isometric copies of a ruled quadrangle $Q$. 
One side of $Q$ 
is the seam $\nu_0$ of $E$, with adjacent right angles.
The sides adjacent to $\nu_0$ 
have the same length, and the length of the opposite
side $\xi$ of $Q$ (which is a short side of $E$) is $2r$. 
The ruling consists of geodesic segments connecting the 
side $\nu_0$ to $\xi$.
The length of the side $\nu_0$ 
is contained in the interval 
$[c^{-1}e^{-R},ce^{-R}]$ for a universal constant 
$c>0$. Note that this estimate also holds true if the 
curvature of $\tilde M$ is not constant.
\end{enumerate} 
\end{remark}

%Let $\gamma:[0,R]\to \tilde M$ be the
%central geodesic of a $\delta$-twisted band of size $R$,
%with long sides $\zeta_1,\zeta_2$. 
%For each $t$ let $\nu(t)$ be the ruling geodesic
%through $\gamma(t)$ which connects 
%$\zeta_1$ to $\zeta_2$. 
%For each $t$ the 
%tangent $v(t)$ of the ruling geodesic $\nu(t)$ 
%is defined at $\zeta(t)$.
%At $t=R/2$ this Jacobi field $W_i$ is tangent to the 
%geodesic $\nu$ which is perpendicular to $\gamma$.

Let 
\[{\cal V}\to \tilde M\]
be the bundle of oriented $2$-planes in 
$T\tilde M$. Its fibre 
over a point $x\in \tilde M$ is the Grassmannian of all
oriented two-dimensional linear subspaces  of $T_x\tilde M$.
The symmetric Riemannian 
metric of $\tilde M$ naturally induces
a Riemannian metric on ${\cal V}$ so that
${\cal V}\to \tilde M$ is a Riemannian submersion, with fibre
isometric to a compact symmetric space.
Denote by $d_{\cal V}$ the induced distance function on
${\cal V}$.

If $H\subset \tilde M$ is an oriented totally geodesic real
hyperbolic plane, then the oriented tangent bundle 
\[{\cal T}(H)\] of $H$ is naturally a totally geodesic 
submanifold of 
${\cal V}$. The projection ${\cal T}(H)\to H$ is an isometry.
Moreover, there 
is a unique shortest distance projection
\[\Pi_{H}:\tilde M\to H.\]

Let $E\subset \tilde M$ be a $\delta$-twisted ruled band of 
size $2R$, with central geodesic $\gamma$ and long sides 
$\alpha_{-r},\alpha_r$. 
By Remark \ref{remarkone},
there is a unique
real hyperbolic plane $H(E)\subset \tilde M$ which 
contains $\gamma$ and the 
seam $\nu_0$ of $E$.
Note that for $\delta=0$ the band $E$ is contained
in $H(E)$.

We next use the shortest distance
projection $\Pi_{H(E)}$ to investigate the 
geometry of a twisted ruled band $E$. 
To this end note that for every oriented long side 
$\beta$ of a $\delta$-twisted ruled band $E$ there is
a natural oriented plane field 
\[V(\beta,E)\to \beta\] 
whose fibre at a point
$\beta(t)$ is spanned by $\beta^\prime(t)$ and 
the parallel transport along $\beta$ of the tangent of the 
seam $\nu_0$ at the midpoint $\beta(0)$ of $\beta$, oriented
as the inner normal of the band.
Note that if $\tilde M$ is a real hyperbolic space
then this plane field is tangent to a totally geodesic hyperbolic plane
embedded in $\tilde M$,
but this need not be the case in general.
We have

\begin{lemma}\label{planedist}
There is a number $C_1>0$ with the following property.
Let $R>10,\delta\in [0,\pi/4]$ and let 
$E$ be a $\delta$-twisted ruled band of size $2R$.
Let $\beta$ be a long side of $E$ and 
let $y\in E$ be a point of distance $t$ to the seam
of $E$; then 
\[d_{\cal V}(T_{\Pi_{H(E)}(y)}H(E),V(\beta,E))
\leq C_1e^{t-R}.\]
\end{lemma}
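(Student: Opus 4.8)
The plan is to bound the displacement from the hyperbolic plane $H(E)$ by tracking how far the ruled band $E$ deviates from being totally geodesic, and this deviation is governed by the twisting parameter $\delta$, which in turn is damped by a factor comparable to $e^{-R}$. First I would reduce to a model computation in a totally geodesic copy of $\mathbb{C}{\bf H}^2$ (or $\mathbb{H}{\bf H}^2$, respectively ${\rm Ca}{\bf H}^2$), since by Remark \ref{remarkone}(1) the plane $H=H(E)$ sits inside such a two-dimensional $\mathbb{K}$-subspace, and the whole picture---the central geodesic $\gamma$, the seam $\nu_0$, the short sides $\nu_{-R},\nu_R$, and the ruling geodesics $\alpha_s$---is contained in the convex hull of $\gamma$ together with the two unit vectors $\hat w_{-R},\hat w_R$, all of which lie in a single such totally geodesic $\mathbb{K}$-plane of dimension $\le 4$ (or $8$). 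So it suffices to prove the estimate there, where explicit trigonometric and curvature-comparison tools are available.

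Next, parametrize a point $y\in E$ by its standard coordinates $(s,t)$ as in the standard parametrization $\alpha(s,t)=\alpha_s(t)$, so that $t$ (up to the bounded reparametrization between $[-\ell,\ell]$ and arclength) measures the distance along $\alpha_s$ from the seam. By Remark \ref{remarkone}(2), the seam $\nu_0$ has length at most $ce^{-R}$, so the endpoints of all the ruling geodesics on the $\nu_0$-side lie within $ce^{-R}$ of one point of $H$; hence the foot of $y$ under $\Pi_{H(E)}$ is essentially determined by how the ruling geodesic $\alpha_s$ spreads away from $H$ as $t$ grows. The key quantitative input is that the initial direction $\alpha_s'(0)$ differs from a direction tangent to $H$ by an angle at most $C\delta e^{-R}$: indeed $\alpha_s(0)\in\nu_0\subset H$, and $\alpha_s$ connects $\nu_{-R}(s)$ to $\nu_R(s)$, whose deviation from $H$ is controlled by the angle $\delta$ between $\hat w_{-R},\hat w_R$ subtended at the far endpoints $\gamma(\pm R)$ of $\gamma$; transporting that angular defect back to the seam across distance $\approx R$ contracts it by $e^{-R}$ by convexity/comparison in negative curvature (Rauch comparison, as in \cite{CE75}). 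Then, moving distance $t$ along $\alpha_s$ from $H$, the distance $d(y,H)$, and with it $d_{\cal V}(T_{\Pi_{H(E)}(y)}H,\,T_yE_s)$ where $E_s$ is the tangent plane along the ruling, grows at most like $\sinh$ of $t$ times that initial angle, i.e. by $\lesssim \delta e^{t-R}\le e^{t-R}$ since $\delta\le\pi/4$; comparing the tangent plane field $V(\beta,E)$ along the long side $\beta$ with $T_yE$ costs only another error of the same order, by uniform continuity of parallel transport over a bounded region together with the $\le ce^{-R}$ bound on the seam length. Absorbing all the universal constants into a single $C_1$ gives the claim.

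The main obstacle I expect is the second step: getting the clean $e^{t-R}$ growth rate rather than merely $e^{t}$ or $\delta e^{t}$. This requires genuinely using that the angular defect $\delta$ is measured at the \emph{ends} $\gamma(\pm R)$ of the central geodesic while the projection foot $\Pi_{H(E)}(y)$ lies near the \emph{midpoint} region (the seam), so that the defect has been exponentially contracted by the time it reaches the part of $E$ near the seam---one must set up the comparison triangles with vertices at $\gamma(\pm R)$, $\nu_0$, and $y$ carefully so that the $e^{-R}$ factor is not lost, and check that the short-side length bound $ce^{-R}$ from Remark \ref{remarkone}(2) interacts correctly with the $e^{t}$ factor coming from the distance $t$ along the ruling. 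The non-constant-curvature case needs the remark that all the relevant geodesics and vectors live in a totally geodesic $\mathbb{K}$-plane, so that the pinching $[-4,-1]$ only enters through harmless universal constants in the comparison estimates; this is exactly the reduction carried out in the proof of Lemma \ref{midpoint}, which I would invoke.
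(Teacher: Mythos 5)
Your proposal matches the paper's proof in all essentials: the ruling geodesic $\alpha_s$ is compared to a geodesic $\beta_s\subset H(E)$ with nearly the same endpoints, comparison in curvature $\leq -1$ (together with the angle $\delta/2$ at $\gamma(\pm R)$) gives the bound $d(\alpha_s(t),\beta_s(t))\lesssim \delta\,|s|\,e^{t-R}$, the tangent-plane estimate follows by parallel transport along nearby curves, and the footpoint $\Pi_{H(E)}(y)$ is controlled via the thin quadrangle in $H(E)$ whose short side is the seam of length $\lesssim e^{-R}$ --- the input you correctly flag as the crucial one for getting $e^{t-R}$ rather than $e^t$. One small correction: $\hat w_{-R}$ and $\hat w_R$ lie in $(\gamma^\prime)_{\mathbb{K}}^\perp$ but need not lie in a common $\mathbb{K}$-line, so the configuration is in general not contained in a totally geodesic $\mathbb{K}$-plane of real dimension $4$ (respectively $8$); this reduction is unnecessary anyway, since the paper runs the comparison directly in $\tilde M$ using only the curvature pinching $-4\leq K\leq -1$.
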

\begin{proof} Let 
$\alpha:[-r,r]\times [-\ell,\ell]\to \tilde M$ be the
standard parametrization of the 
$\delta$-twisted ruled band $E$.
Then $\alpha(0,\ell)$ is contained in the central geodesic
$\gamma\subset H(E)$ of $E$.
By the definition of a
$\delta$-twisted ruled band of size $2R$, for $s\in [-r,r]$ we have
\begin{equation}\label{deviation}
d(\alpha(s,\ell),\Pi_{H(E)}(\alpha(s,\ell)))\leq  
C_2\delta \vert s\vert \end{equation}
where $C_2>0$ is  a universal constant.
Namely, the central geodesic $\gamma$ of $E$ is contained
in the hyperbolic plane $H(E)$. 
The point $\alpha(s,\ell)$ 
can be obtained from the endpoint 
$\alpha(0,\ell)$ of $\gamma$
by a geodesic of length $\vert s\vert \leq r$ which makes an angle
$\delta/2$ to the tangent plane of $H(E)$.

Let $\beta_s\subset H(E)$ be the geodesic connecting
$\alpha(s,0)$ to $\Pi_{H(E)}(\alpha(s,\ell))$. We assume that
$\beta_s$ is 
parametrized proportional to arc length on $[0,\ell]$.
Since the curvature of $\tilde M$ is bounded from 
above by $-1$, comparison shows that for $0\leq t\leq \ell$  
we have
\begin{equation}\label{deviation2}
d(\alpha(s,t),\beta_s(t))\leq C_3 \delta \vert s\vert e^{t-R}
\end{equation}
for a universal constant $C_3>0$.

Parallel transport of tangent planes along geodesics
in $\tilde M$ 
defines horizontal geodesics in the bundle ${\cal V}$.
From the estimate (\ref{deviation2}) we therefore obtain that 
for $t\in [0,\ell]$ we have
\begin{equation}\label{deviation3}
d_{\cal V}(V(\alpha_r,E)(\alpha(r,t)), 
T_{\beta_r(t)}H(E))\leq C_4\delta e^{t-R}.\end{equation}

The geodesics $\beta_{-r},\beta_r$ and the seam 
$\nu_0$ of $E$ 
define three sides of a geodesic quadrangle $Q$ in 
the hyperbolic plane $H(E)$. The length of the
sides $\beta_{-r},\beta_r$ equals 
$\ell$ up to an error of size at most $\delta$.
Since the projection $\Pi_{H(E)}$ is distance
non-increasing, the
length of the side of $Q$ opposite to $\nu_0$ is at most $2r$.
Thus inequality (\ref{deviation}) implies that
if $y\in E$ is at distance $t$ from the seam,
then there is some $z\in \beta_r$ such that
$d(z,\Pi_{H(E)}(y))\leq C_5 e^{t-R}$. 

As the map $z\in H(E)\to T_zH(E)$ is an isometric
embedding of $H(E)$ into ${\cal V}$, we then have 
\[d_{\cal V}(T_zH(E),T_{\Pi_{H(E)}(y)}H(E))\leq C_5e^{t-R}\]
as well.
Together with the estimate (\ref{deviation3}), 
this shows the lemma.
\end{proof}

\begin{remark}\label{higherrank1}
Lemma \ref{planedist} immediately extends to
symmetric spaces $X$ of higher rank as follows.
Let $H\subset X$ be a totally geodesic 
embedded plane of constant curvature $-1$. 
For $\delta >0$ define a $\delta$-twisted 
ruled band in $X$ by rotating the
small sides of an embedded band $\hat E$ in $H$ about the 
central geodesic of $\hat E$ by an angle $\delta$
as described above. Then 
\[d_{\cal V}(T_{\Pi_{H(E)}(y)}H(E),V(\beta,E))\leq
\max\{\delta,C_1e^{t-R}\}.\] 
\end{remark}

The next lemma compares distances in twisted 
ruled bands $E$ with the distance of their projections 
to $H(E)$. To this end denote 
for a $\delta$-twisted ruled band $E$
by $d_E$ the intrinsic path metric on $E$. 
Note that $E$ is a smoothly embedded submanifold of 
$\tilde M$, in particular its tangent plane is defined 
everywhere.

\begin{lemma}\label{projection}
For every $\epsilon >0$ there is a number 
$\delta_1=\delta_1(\epsilon)>0$ with the following
property. Let $R>10$,  
$\delta \leq \delta_1$ and let $E$ be a 
$\delta$-twisted ruled band of size $2R$.
Then for all 
$x,y\in E$ we have 
\[d(\Pi_{H(E)}(x),\Pi_{H(E)}(y))\geq
d_E(x,y)(1+\epsilon)^{-1}.\] 
\end{lemma}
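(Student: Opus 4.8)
The plan is to compare the intrinsic metric $d_E$ on the twisted band $E$ with the metric on $H(E)$ pulled back via the nearest-point projection $\Pi_{H(E)}$, and to show that as $\delta\to 0$ the map $\Pi_{H(E)}|_E$ becomes an arbitrarily good bi-Lipschitz map in the relevant direction. The key point is that $E$ is foliated by the ruling geodesics $\alpha_s$, $s\in[-r,r]$, which are exponentially close to genuine geodesics $\beta_s\subset H(E)$ — this is exactly the content of the estimate (\ref{deviation2}) in the proof of Lemma \ref{planedist}. So first I would record that the tangent plane $T_yE$ at a point $y\in E$ at distance $t\le r$ from the seam differs from the plane $V(\beta,E)$ transported along the appropriate long side by at most $C\delta e^{t-R}$, and hence — combining with Lemma \ref{planedist} — that $T_yE$ differs from $T_{\Pi_{H(E)}(y)}H(E)$ (suitably compared inside ${\cal V}$, after moving along the short projection geodesic) by at most $C\delta$, uniformly over $E$ and over $R>10$, because the short sides of $E$ have length only $2r$ (independent of $R$) and the exponential factors $e^{t-R}$ are bounded.

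Next I would argue infinitesimally. Fix a point $y\in E$ and a unit tangent vector $u\in T_yE$. The derivative of $\Pi_{H(E)}$ at $y$ is distance non-increasing and, when restricted to $T_yE$, its operator norm deficit from $1$ is controlled by the angle between $T_yE$ and the "horizontal" plane $T_{\Pi_{H(E)}(y)}H(E)$ together with the Jacobi-field expansion/contraction along the short projection geodesic from $y$ to $\Pi_{H(E)}(y)$. Since that projection geodesic has length at most $C\delta\,r$ by (\ref{deviation}) — of order $\delta$, uniformly — and since the angle just discussed is $\le C\delta$, standard comparison for Jacobi fields in curvature $\in[-4,-1]$ gives $\|d(\Pi_{H(E)})_y u\|\ge (1+\epsilon')^{-1}$ for every unit $u\in T_yE$, where $\epsilon'\to 0$ as $\delta\to 0$. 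Integrating this infinitesimal estimate along a path in $E$ realizing (up to a negligible error) the intrinsic distance $d_E(x,y)$, and noting that the image path lies in $H(E)$ and therefore has length $\ge d(\Pi_{H(E)}(x),\Pi_{H(E)}(y))$, yields
\[
d(\Pi_{H(E)}(x),\Pi_{H(E)}(y))\ \ge\ (1+\epsilon')^{-1}\,d_E(x,y),
\]
and choosing $\delta_1=\delta_1(\epsilon)$ so small that $\epsilon'\le \epsilon$ finishes the proof.

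The main obstacle I anticipate is making the uniformity in $R$ genuinely clean: one must be sure that all the exponential factors $e^{t-R}$ that appear (in Lemma \ref{planedist} and in the comparison of tangent planes) only ever help, never hurt, so that the bi-Lipschitz deficit is governed purely by $\delta$ and not by $R$. This is plausible because the band $E$ has short sides of fixed length $2r$, so the "width" over which $E$ deviates from $H(E)$ is bounded independently of $R$, and along the long direction $E$ is literally ruled by geodesics that are exponentially asymptotic to geodesics of $H(E)$. A secondary technical point is that $\Pi_{H(E)}|_E$ need not be injective a priori, but this is irrelevant for the stated inequality, which only bounds the image distance from below by the intrinsic source distance; one simply applies the infinitesimal estimate along a near-minimizing path in $E$ without worrying about the global behaviour of the projection. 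I would also invoke the quadrangle estimate from the proof of Lemma \ref{planedist} (the side of $Q$ opposite the seam has length $\le 2r$) to confirm that every point of $E$ projects into the bounded region of $H(E)$ bounded by $\beta_{-r},\beta_r,\nu_0$, keeping all constants uniform.
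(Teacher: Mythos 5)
Your proposal starts from the same point as the paper — the tangent planes of $E$ are $\epsilon$-close to those of $H(E)$ for $\delta$ small, which makes the differential $d\Pi_{H(E)}|_{T_yE}$ close to an isometry — but the way you convert the infinitesimal estimate into the stated distance inequality is logically incorrect, and it is exactly the issue you dismiss as irrelevant in your closing paragraph that breaks the argument.

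Concretely: let $\gamma$ be a near-minimizing path in $E$ from $x$ to $y$, so $\mathrm{length}_E(\gamma)\approx d_E(x,y)$. Your differential bound gives $\mathrm{length}_{H(E)}(\Pi\circ\gamma)\geq(1+\epsilon')^{-1}\mathrm{length}_E(\gamma)$, and you also note $\mathrm{length}_{H(E)}(\Pi\circ\gamma)\geq d(\Pi(x),\Pi(y))$. These are two lower bounds on the \emph{same} quantity $\mathrm{length}(\Pi\circ\gamma)$, and together they say nothing that compares $d(\Pi(x),\Pi(y))$ to $d_E(x,y)$. (Also bear in mind $\Pi$ is globally $1$-Lipschitz, so the image path has length at most $\mathrm{length}(\gamma)$; the projected path could still wander inside $H(E)$, so its length being long does not force the endpoints to be far apart.) The implication ``image path has length $\geq d(\Pi x,\Pi y)$, hence $d(\Pi x,\Pi y)\geq(1+\epsilon')^{-1}d_E(x,y)$'' is a non-sequitur.

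The correct direction of travel — which is what the paper's one-line conclusion is implicitly invoking — is the opposite: one shows that $\Pi_{H(E)}|_E$ is injective, hence a diffeomorphism onto its image, and pulls back the $H(E)$-geodesic $\sigma$ from $\Pi(x)$ to $\Pi(y)$ to a path $(\Pi|_E)^{-1}\circ\sigma$ in $E$ joining $x$ to $y$; the differential estimate then gives $\mathrm{length}_E((\Pi|_E)^{-1}\circ\sigma)\leq(1+\epsilon')\,\mathrm{length}(\sigma)=(1+\epsilon')\,d(\Pi(x),\Pi(y))$, and since this path competes in the definition of $d_E(x,y)$ the inequality follows. This requires that $\sigma$ (or at least a nearby competitor) can be lifted through $\Pi|_E$, i.e. that $\Pi(E)$ contains or nearly contains the relevant geodesic — injectivity and some geometric control of the image are precisely what is needed, and they are not ``irrelevant for the stated inequality'' as you claim. (Injectivity does follow here, e.g.\ because a proper local diffeomorphism from a simply connected compact surface onto a simply connected image is a homeomorphism, but you should say this.) So the gap is not merely stylistic: as written your integration step does not prove the stated lower bound, and the clause in which you wave away injectivity identifies exactly the missing ingredient.
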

\begin{proof}
Let $\epsilon >0$, let $\delta>0$ and let $E$ be a $\delta$-twisted ruled band
of size $2R>10$.
By the discussion in the proof of 
Lemma \ref{planedist} (or by a standard compactness argument), 
for sufficiently small $\delta$ the distance in ${\cal V}$ between
a tangent plane $T_yE$ of $E$ and the tangent bundle
${\cal T}H(E)$ of $H(E)$ is at most $\epsilon$.

As a consequence, for sufficiently small
$\delta$ the restriction of the projection
$\Pi_{H(E)}$ to $E$ is 
a diffeomorphism onto its image which moreover
is bilipschitz with bilipschitz constant at most $1+\epsilon$. 
\end{proof}

In the following definition, 
the oriented distance of two points on the boundary of 
an oriented surface is taken with respect to the induced 
boundary orientation. The definition is a variant of 
a definition from \cite{KM12}.
%In the following definition, 
%the number $a>0$ will be specified
%later in this section.

\begin{definition}\label{wellattached5}
For numbers $\sigma_1,\sigma_2 \in [0,1/4]$, 
two oriented twisted ruled bands $E_1,E_2$ are called 
\emph{$(\sigma_1,\sigma_2)$-well attached} 
along a common boundary geodesic
$\beta$ if the following holds.
\begin{enumerate}
\item[$\bullet$]
The orientations of $\beta$ induced by the orientations of
$E_1,E_2$ are opposite.
\item[$\bullet$] Let $x_i\in \beta$ be the endpoint on 
$\beta$ of the seam $\nu_i$ of $E_i$ $(i=1,2)$.   
The oriented 
distance along $\beta$ between $x_1,x_2$ 
is contained in the interval $[1-\sigma_1,1+\sigma_1]$.
\item[$\bullet$] Let $v_i\in T_{x_i}^1\tilde M$ be the oriented
tangent of $\nu_i$ at $x_i$ $(i=1,2)$; then 
the angle between $v_2$ and the parallel transport of 
$-v_1$ along $\beta$ is at most $\sigma_2$. 
\end{enumerate}
\end{definition}

Note that in view of Lemma \ref{longarcs}, the first
two properties control the intrinsic geometry
of the attached bands. The third property is used to relate
the intrinsic geometry of the attached bands 
to the extrinsic geometry of the ambient manifold.

Fix a number $b>1$. 
%Let $a>0$ be a number to be determined later. 
For numbers $m>10,\delta >0$ define an
\emph{$(R,\delta)$-admissible chain} 
of twisted ruled bands to 
be a surface of the form 
$E=E_1\cup\dots\cup E_m$ where $E_i$ is an oriented  
$\delta_i<\delta$-twisted band of size $2R_i$ 
for some $R_i\in [R-\delta,R+\delta]$ 
and where
$E_i$ is $(\delta,R^{-b})$-well attached to $E_{i-1}$ 
along a boundary geodesic which 
is disjoint from $E_{i-2}$.

In what follows, whenever we estimate distances in 
a fibre bundle over $\tilde M$, then these distances
are taken with respect to the natural 
Riemannian metric on the bundle which is induced from the 
Riemannian metric on $\tilde M$.
The distance in $\tilde M$ will simply be denoted by $d$.

$\delta$-twisted ruled bands with their intrinsic metric
are isometrically immersed smooth submanifolds
of $\tilde M$ which are $C^2$-close to 
totally geodesic embedded hyperbolic planes. Thus
the intrinsic curvature is close to $-1$, and by
making $\delta$ smaller we may assume that 
this metric is ${\rm CAT}(-1/2)$.
A rescaled version of 
Lemma \ref{localcat} then shows that the intrinsic 
path metric on any $(R,\delta)$-admissible chain of ruled bands
is locally ${\rm CAT}(-1/2)$. In particular, any two 
points are connected by a unique geodesic.

Our next goal is to 
control the geometry of $(R,\delta)$-admissible chains 
$E_1\cup\dots \cup E_m$ of 
twisted ruled bands by comparing distances for the 
intrinsic path metric with distances in $\tilde M$.

The constant $C_0>0$ in the formulation of the following
lemma is the constant from Lemma \ref{realhyp}.
Up to changing $\delta_2$, the number 
$\min\{1/4,C_0/2\}$ can be replaced
by any other positive constant.

\begin{lemma}\label{anglecontrol}
%There is a number $C_3>0$, and 
For every $\epsilon >0$ there are numbers 
%$p=p(\epsilon)>0$  and a number 
$\delta_2=\delta_2(\epsilon)>0$, $R_2=R_2(\epsilon)>10$ 
with the following property. Let $R>R_2$ and 
let $E=E_1\cup\dots\cup E_m$ be an $(R,\delta_2)$-admissible chain of
ruled bands. For $i,j\leq m$      
let $y\in \partial E_i,z\in \partial E_j$ be points 
which are connected by a geodesic $\zeta$ in $E$ 
for the intrinsic metric of length $\ell(\zeta)\leq 
\min\{1/4,C_0/2\}$. Assume that 
$\zeta$ does not meet a short side of any
band in the chain. 
Then 
%\begin{enumerate}
%\item 
%$d_{\cal P}(T_yE_i,{\cal T}(H(E_j)))\leq \epsilon.$
%\item 
the length of $\zeta$ 
is at most $(1+\epsilon)d(y,z)$.
%\item The angle between the tangents of $\zeta$ at its
%endpoints and the tangents of the geodesic in $M$ with
%the same endpoints is at most $\epsilon$.
%\end{enumerate}
\end{lemma}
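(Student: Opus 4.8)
The plan is to compare the intrinsic metric $d_E$ on the admissible chain with the pullback of the ambient metric via the shortest-distance projections onto the hyperbolic planes $H(E_i)$, and then to exploit the fact that consecutive planes $H(E_i)$ are almost a ``hyperbolic surface with a tight pants decomposition,'' so that Lemma~\ref{longarcs} applies. First I would fix $\epsilon>0$ and choose $\delta_1=\delta_1(\epsilon')$ from Lemma~\ref{projection} for a suitable auxiliary $\epsilon'$ to be specified. Since $\zeta$ does not meet a short side of any band, $\zeta$ decomposes as a concatenation $\zeta=\zeta_1\cup\cdots\cup\zeta_k$ where each $\zeta_s$ lies in a single band $E_{i_s}$ and the breakpoints lie on common boundary geodesics $\beta_s$ of consecutive bands. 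On each $\zeta_s$, Lemma~\ref{projection} gives $d\bigl(\Pi_{H(E_{i_s})}(\zeta_s(0)),\Pi_{H(E_{i_s})}(\zeta_s(1))\bigr)\ge \ell(\zeta_s)(1+\epsilon')^{-1}$; summing over $s$ bounds $\ell(\zeta)=\sum_s\ell(\zeta_s)$ from above by $(1+\epsilon')\sum_s d_{H(E_{i_s})}(\ldots)$. The issue is that these projected sub-arcs live in different hyperbolic planes and need not concatenate into a path in $\tilde M$ of controlled length; this is where the well-attached hypothesis and Lemma~\ref{planedist} enter.

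Next I would use Lemma~\ref{planedist} together with the $(\delta,R^{-b})$-well-attached condition to control the ``jumps'' at the breakpoints $\beta_s$. At a breakpoint lying on $\beta_s$, the tangent planes of $H(E_{i_s})$ and $H(E_{i_{s+1}})$ along $\beta_s$ are within distance at most $\sigma_2+C_1e^{t-R}$ in $\mathcal V$, where $t$ is the distance from the breakpoint to the relevant seam; since $\sigma_2=R^{-b}$ and $R>R_2$ is large, and since — crucially — the breakpoints at which $t$ is \emph{not} small force $\zeta$ to be long (by central $C_0$-thickness, which is where $\ell(\zeta)\le\min\{1/4,C_0/2\}$ is used), the contributions of large $t$ do not occur; for the remaining breakpoints $t=O(1)$ and the plane-jump is $O(R^{-b})$. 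Thus the images $\Pi_{H(E_{i_s})}(\zeta_s)$ can be strung together, after correcting by geodesic arcs of total length $O(k\cdot R^{-b})$, into an actual path in $\tilde M$ from $y$ to $z$; hence $d(y,z)\ge \sum_s d_{H(E_{i_s})}(\ldots) - O(k R^{-b})$. Here $k$ is at most the number of bands crossed by $\zeta$, which by Lemma~\ref{longarcs} (applied with $C=\min\{1/4,C_0/2\}$, using Lemma~\ref{realhyp} to guarantee central $C_0$-thickness of the comparison hyperbolic structure) is bounded in terms of $\sum 1/\tau(\cdot)^b$; more simply, since each band has size $\approx 2R$ and the bands are traversed through their thin parts, $k$ is uniformly bounded along an arc of length $\le\min\{1/4,C_0/2\}$, so $O(kR^{-b})\to 0$ as $R\to\infty$.

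Combining the two displays, $\ell(\zeta)\le (1+\epsilon')\bigl(d(y,z)+O(kR^{-b})\bigr)\le (1+\epsilon')(1+O(R^{-b}))\,d(y,z)$, and choosing $\epsilon'$ small and $R_2=R_2(\epsilon)$ large gives $\ell(\zeta)\le(1+\epsilon)d(y,z)$, as desired; $\delta_2$ is then the minimum of $\delta_1(\epsilon')$ and whatever is needed for the ${\rm CAT}(-1/2)$ and $C^2$-closeness statements to hold. I expect the main obstacle to be the bookkeeping at the breakpoints: showing that only $O(1)$ breakpoints occur and that each contributes only a negligible defect requires the central thickness hypothesis to be invoked exactly as in Lemma~\ref{longarcs}, and one must be careful that the shortest-distance projections $\Pi_{H(E_i)}$ remain well-behaved (in particular injective and bilipschitz on the relevant portions of $E_i$), which is precisely the content of Lemma~\ref{projection} but needs to be applied on each sub-band simultaneously with a uniform constant.
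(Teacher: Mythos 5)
The central step of your argument has the inequality running in the wrong direction. You propose to string the projections $\Pi_{H(E_{i_s})}(\zeta_s)$ together, with correction arcs of total length $O(kR^{-b})$, into a path in $\tilde M$ from $y$ to $z$, and then deduce $d(y,z)\ge \sum_s d_{H(E_{i_s})}(\cdots)-O(kR^{-b})$. But producing a path from $y$ to $z$ only gives an \emph{upper} bound on $d(y,z)$, not a lower bound, and an upper bound cannot be combined with $\ell(\zeta)\le(1+\epsilon')\sum_s d_{H(E_{i_s})}(\cdots)$ to prove $\ell(\zeta)\le(1+\epsilon)d(y,z)$. There is also a secondary error: the number $k$ of bands crossed is \emph{not} uniformly bounded. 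Near the seams the bands are exponentially thin, and what the monotonicity of $\tau_\ell$ coming from Lemma \ref{longarcs} gives (and what the paper uses) is $k\le 4R$; this is still enough because $b>1$ makes $kR^{-b}=O(R^{1-b})\to 0$, but your justification is incorrect.

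The paper avoids the reassembly problem entirely by using a single reference plane and by building the short competitor \emph{inside} $E$ rather than pushing $\zeta$ out into $\tilde M$. Having first established by induction the cumulative estimate $d_{\cal V}(T_{\Pi_{H(E_k)}(z_k)}H(E_k),T_{\Pi_{H(E_i)}(z_k)}H(E_i))\le\chi_0\sum_{\ell}\max\{e^{\tau_\ell-R},R^{-b}\}$ and used Lemma \ref{longarcs} to make the right-hand side small once $R$ is large (after discarding a bounded number $p$ of initial and final bands), it fixes one plane $H(E_{i_0})$, forms the geodesic $\eta$ in $H(E_{i_0})$ from $\Pi_{H(E_{i_0})}(y)$ to $\Pi_{H(E_{i_0})}(z)$ -- whose length is $\le d(y,z)$ because nearest-point projection is $1$-Lipschitz -- and then pulls the segments of $\eta$ between consecutive lines $\Pi_{H(E_{i_0})}(\beta_\ell)$ back into $E$ band by band via the bilipschitz inverses furnished by Lemma \ref{projection}. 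The result is a path \emph{in $E$} from $y$ to $z$ of length at most $(1+\epsilon)^2 d(y,z)$, and since $\zeta$ is the intrinsic geodesic in $E$, $\ell(\zeta)$ is at most this length. Only this direction -- pulling a geodesic in $\tilde M$ back into $E$ to construct a competitor, rather than projecting $\zeta$ out of $E$ -- yields an inequality with the required orientation.
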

\begin{proof} For a number $R>10$, a number $\delta\in [0,1/10]$ 
and some $m\geq 1$ let 
$E=E_1\cup \dots \cup E_m$ be an
$(R,\delta)$-admissible chain.

Let $1\leq i<j\leq m$ and let $x\in \partial E_i,y\in \partial E_j$.
Assume that the geodesic $\zeta$ on $E$ 
connecting $x$ to $y$
does not intersect a small side of any of the bands
which make up $E$ and that its length
does not exceed $\min\{1/4,C_0/2\}$.

For $k\leq m$ let 
\[\beta_k=E_{k-1}\cap E_k.\] 
For $i<\ell <j$ let $u_\ell=\zeta\cap \beta_\ell$.
Let $\tau_\ell$ be the distance between $u_\ell$ and
the seam  of the band $E_\ell$. By assumption, for every
$z\in \zeta\cap E_\ell$ the distance between
$z$ and the seam of $E_\ell$ is contained in the
interval $[\tau_\ell-1/4,\tau_\ell +1/4]$. 

By Lemma \ref{localcat},
the intrisinc path metric on $E$ is locally ${\rm CAT}(-1/2)$.
Thus we can use Remark \ref{onlylam} and deduce from
Lemma \ref{longarcs} and its proof   
that up to subdividing $\zeta$ into two
disjoint segments and reversing the orientation of 
one of these segments as well as 
reversing the numbering of the bands in 
the chain, we may assume that 
$\tau_{\ell+1}> \tau_\ell$ for  all $\ell$.
Moreover, we have $j-i\leq 4R$.
  
We now claim that there is a number 
$\chi_0>0$ with the following property. 
For every $i<k < j$ and every $z_k\in \zeta\cap  E_k$ we have
\begin{equation}\label{sumangle}
d_{\cal V}(T_{\Pi_{H(E_k)}(z_k)}H(E_k),T_{\Pi_{H(E_i)}(z_k)}H(E_i))\leq 
\chi_0\sum_{\ell=i}^k\max\{e^{\tau_\ell-R},R^{-b}\}.
\end{equation}
This estimate holds true for every $\delta\in [0,1/10]$.

We proceed by induction on $k-i$.
The claim for $k=i$ is trivial- in fact, 
every number $\chi_0>0$ will do.
Thus assume that the statement
holds true for $k-i<n\leq j-i$ where $n\geq 1$.

Lemma \ref{planedist} shows that 
there is some $u\in \beta_k$ such that 
\begin{equation}\label{angle5}
d_{\cal V}(T_{\Pi_{H(E_k)}(z_k)}H(E_k),
V(\beta_k,E_k)(u))\leq C_1 e^{\tau_k-R}.
\end{equation}

Now $V(\beta_k,E_k)(u)$ is obtained from
the span of $\beta_k^\prime$ and the tangent of the seam
$\nu_k$ of $E_k$ by parallel transport along $\beta_k$.
Similarly, $V(\beta_k,E_{k-1})(u)$ is 
obtained from the span of $\beta_k^\prime$ and 
the tangent of the seam $\nu_{k-1}$
by parallel transport
along $\beta_k$. In particular, 
by the definition of well attached bands, 
\begin{equation}\label{angle4}
d_{\cal V}(V(\beta_k,E_{k-1})(u), V(\beta_k,E_k)(u))\leq 
R^{-b}.\end{equation}
Thus from the estimate (\ref{angle5}) we conclude that
\begin{equation}\label{angle7}
d_{\cal V}(T_{\Pi_{H(E_k)}(z_k)}H(E_k),
V(\beta_k,E_{k-1})(u))\leq C_2\max\{e^{\tau_k-R}, R^{-b}\}.
\end{equation}

Lemma \ref{projection} implies that up to replacing
$C_1$ by $2C_1$, 
the intrinsic distance in $E$ between $z_k$ and $u$
is at most $C_1e^{\tau_k-R}$. Since the intrinsic path
metric on $E$ is ${\rm Cat}(-1/2)$ and since 
the projections $\Pi_{H(E_i)}$ are distance
non-increasing, the estimate (\ref{angle7})
yields that for the proof of 
inequality (\ref{sumangle}), it suffices to show
that 
\[d_{\cal V}(V(\beta_k,E_{k-1}(u)),T_{\Pi_{H(E_i)(u)}}H(E_i))
\leq \chi_0\sum_{\ell=i}^{k-1}\max\{e^{\tau_\ell-R},R^{-b}\}.\]

Lemma \ref{planedist} allows to replace
$V(\beta_k,E_{k-1}(u))$ by 
$T_{\Pi_{H(E_{k-1})(u)}}H(E_{k-1})$. 
The estimate (\ref{sumangle}) now follows from
the induction hypothesis provided that
the constant $\chi_0>0$ is sufficiently large
(in particular, it has to be chosen larger than $2C_2$).

For $\rho>0$ there is a number $r(\rho)>0$ with the 
following property. Let $H_1,H_2\subset\tilde M$ be two 
totally geodesic real hyperbolic planes. Assume that
$x\in H_1$ and that $d_{\cal V}(T_xH_1,T_{\Pi_{H_2}(x)}H_2)<r(\rho)$; then
the restriction of the projection $\Pi_{H_2}$ to 
the ball of radius one about $x$ in $H_1$ is a 
$(1+\rho)$-bilipschitz diffeomorphism onto its image. 
Moreover, for every $y\in H_1$ with $d(x,y)\leq 1$ we have
$d(y,\Pi_{H_2}(y))\leq \rho$.

Let $\epsilon >0$. By inequality (\ref{sumangle}),
by Lemma \ref{projection}, Lemma \ref{realhyp} and 
by Lemma \ref{longarcs} and its proof,
there are numbers $p>0$, $R_2>0$ with the following property.
Let $E$ be an $(R,\delta)$-admissible chain for some
$R\geq R_2$ and some $\delta<1/10$.
Let $\zeta:[0,a]\to E$ be any geodesic arc 
of length $a\leq \min\{1/4,C_0/2\}$ 
as above which 
connects a point $x\in \partial E_i$ to a point
$y\in \partial E_j$. Assume that $j-i\geq p$. 
Then there are numbers
$i_0\in [i,i+p],j_0\in [j-p,j]$ with the following property.

Let $0\leq s\leq t\leq a$ be such that
$\zeta(s)\in \beta_{i_0},\zeta(t)\in \beta_{j_0}$; then 
\begin{equation}\label{sumangle2} 
d_{\cal V}(T_{\Pi_{H(E_{i_0})}(\zeta(s))}H(E_{i_0}),
T_{\Pi_{H(E_{j_0})}(\zeta(s))}H(E_{j_0}))\leq r(\epsilon/2)/3.
\end{equation}
As before, this estimate holds true for all 
$\delta\in [0,1/10]$.

With this number $p>0$,
it follows from the estimate (\ref{sumangle}), 
Lemma \ref{projection} and its proof 
and the definition of an admissible chain that there
is a number $\sigma=\sigma(\epsilon,p)<1/10$ 
with the following property.

Let $R>R_2$ and let 
$E=E_1\cup\dots \cup E_p$ be any $(R,\sigma)$-admissible chain 
of twisted ruled bands. Let 
$x\in E$; then for $1\leq i\leq j\leq p$ we have
\begin{equation}\label{finitelength}
d_{\cal V}(T_{\Pi_{H(E_i)}(x)}H(E_i),T_{\Pi_{H(E_j)}(x)}H(E_j))\leq r(\epsilon/2)/3.
\end{equation}
The point is here that the number $p$ is fixed, and 
that the number $\sigma$
can be chosen arbitrarily small.

%Together with the estimate (\ref{sumangle2}) 
%and Lemma \ref{projection},
%this shows the first part of the lemma for 
%$\delta_2=\min\{\hat \delta,\sigma\}$.

%To show the second part of the lemma,
%observe by the above that we have the following.
%Let $x\in \alpha_{k-1},y\in \alpha_k$ be points of distance 
%at most $m-k$ to $x_k$. Then 
%\begin{equation}\label{angleseven}
%d(\Pi_{H(E_1)}(x),\Pi_{H(E_1)}(y))\geq (1+C_4(m-k)^{-2})d(x,y).
%\end{equation}
%
%write $\Pi=\Pi_{H(E_1)}$ and let 
%$x\in E_1$ be a point on the middle geodesic of $E_1$.

Now let $\delta_1(\epsilon)>0$ be as in Lemma \ref{projection} and let 
$\delta_2=\min\{\sigma,\delta_1(\epsilon)\}$.
Let $R>R-2$, 
$\delta<\delta_2$, let 
$m>0$ be arbitrary and let 
$E=E_1\cup \dots \cup E_m$ be an $(R,\delta)$-admissible chain.
Let $1\leq i<j\leq m$ and let $y\in E_i,z\in E_j$  
be such that $d(y,z)\leq \min\{1/4,C_0/2\}$
as before and that moreover the geodesic in $E$ connecting
$x$ to $y$ does not meet a small side of a band in the chain.
Choose $i\leq i_0<j_0\leq j$ 
with $i_0-i\leq p,j-j_0\leq p$ as above. 

Let $\Pi=\Pi_{H_{i_0}}$. 
By the choice of $\delta$,  
the restriction of the projection $\Pi$ to the ball of radius one
about $y$ in $E$ is injective. 
Thus the geodesic $\eta$ in $H(E_{i_0})$ connecting 
$\Pi(y)$ to $\Pi(z)$ crosses through the lines
$\hat \beta_\ell=\Pi(\beta_\ell)$ where $i<\ell <j$
in increasing order.

Assume that $\eta$ is parametrized 
by arc length on an interval $[0,c]$.
Let $t_\ell\geq 0$ be such that 
$\eta(t_\ell)=\eta\cap \hat \beta_\ell$. 
Let $\zeta_\ell\in E$ be the preimage of $\eta(t_\ell)$ under the 
map $\Pi\vert E$. Using again the choice of $\delta$,
the length $q_\ell$ 
of a shortest geodesic in $H(E_\ell)$ connecting
$\Pi_{H(E_\ell)}(\zeta_{\ell -1})$ to $\Pi_{H(E_\ell)}(\zeta_\ell)$
does not exceed $(1+\epsilon)(t_\ell -t_{\ell -1})$.
As $\delta<\delta_1(\epsilon)$, 
Lemma \ref{projection} shows that 
the length of 
a shortest geodesic in $E_\ell$ connecting
$\zeta_{\ell-1}$ to $\zeta_\ell$ is not 
bigger than $q_\ell(1+\epsilon)$.
Summing over $\ell$ implies the lemma.
\end{proof}

\begin{remark}\label{higherrank2}
Lemma \ref{anglecontrol} is valid without change
for chains of $\delta$-twisted ruled bands 
of size $R$ in 
higher rank symmetric spaces provided that these
bands are constructed as in Remark \ref{higherrank1} and
are $(\delta,e^{-\kappa R})$-well attached for some
$\kappa >0$.  
\end{remark}

The thin parts of $(R,\delta)$-skew pants are not $\delta$-twisted
ruled bands, but they are exponentially close to such bands in 
a sense we now specify.
Namely, define an \emph{approximate $\delta$-twisted ruled band
of size $R$} to be a ruled surface $E$ with the following
property. 

Fix a number $\kappa\in (0,1)$ whose precise value will be determined later.
We require that 
there is a $\delta$-twisted ruled band $E^0$ 
of size $R$, with short sides $\nu^0_{-R},\nu^0_R$, and there are
geodesics $\nu_{-R},\nu_R$ parametrized proportional to arc
length on $[-r,r]$ with 
\[d(\nu_{i}(-r),\nu_i^0(-r))\leq e^{-\kappa R},
d(\nu_i(r),\nu_i^0(r))\leq e^{-\kappa R}\, (i=-R,R)\] 
such that
$E$ is obtained by connecting  
for each $s\in [-r,r]$ the points 
$\nu_{-R}(s)$ and $\nu_R(s)$
by a geodesic. 
We call the geodesics $\alpha_{-r},\alpha_r$ 
connecting the endpoints of $\nu_{-R},\nu_R$ 
the \emph{long sides}
of the ruled band $E$, and we call the 
shortest geodesic in $\tilde M$ connecting 
the two long sides of $E$ 
the \emph{seam} of the band.

Let $\alpha_0:[-r,r]\times [-\ell,\ell]\to E_0$ be the
standard parametrization of the twisted ruled band
$E_0$ as described in the beginning of this section 
and let $\alpha:[-r,r]\to [-\ell,\ell]\to E$ be the
parametrization of $E$ defined by requiring that
$t\to \alpha(s,t)$ is the geodesic connecting
$\nu_{-R}(s)$ to $\nu_R(s)$ parametrized proportional
to arc length on $[-R,R]$.
Hyperbolicity and comparison shows that 
\[d(\alpha_0(s,t),\alpha(s,t))\leq e^{-\kappa  R}\] for all $s,t$.

The notion of an $(R,\delta)$-admissible chain 
is also defined for approximate $\delta$-twisted ruled bands.
As in Lemma \ref{anglecontrol} we conclude

\begin{corollary}\label{anglecontrol2}
For every $\epsilon >0$ 
there is a number $\delta_3=\delta_3(\epsilon)>0$
with the following property. Let $m>10$ and let
$E=E_1\cup\dots\cup E_m$ be an 
$(m,\delta_3)$-admissible chain of 
approximate ruled bands. Let
$x,y$ be points which 
are connected by a geodesic $\zeta$ in $E$ for the intrinsic metric.
Assume that $\zeta$ does not meet the short side of any
band in the chain. Then 
the length of $\zeta$ is at most 
$(1+\epsilon)d(x,y)$. 
%Moreover, the angle
%between $\zeta$ at its endpoints and
%the geodesic with the same endpoints is at most $\epsilon$.
\end{corollary}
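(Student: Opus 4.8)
**

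The plan is to deduce Corollary \ref{anglecontrol2} directly from the proof of Lemma \ref{anglecontrol} by a perturbation argument. The point is that an approximate $\delta$-twisted ruled band $E_i$ of size $R$ is, by construction, obtained from a genuine $\delta$-twisted ruled band $E_i^0$ of size $R$ by moving the endpoints of its short sides by at most $e^{-\kappa R}$; and the estimate $d(\alpha_0(s,t),\alpha(s,t))\leq e^{-\kappa R}$ recorded just before the corollary shows that the whole surface $E_i$ lies within Hausdorff distance $e^{-\kappa R}$ of $E_i^0$, and similarly for tangent planes when one invokes hyperbolic comparison for the ruling geodesics. Thus every estimate in the proof of Lemma \ref{anglecontrol} that was phrased in terms of the associated hyperbolic planes $H(E_i)$ — namely the projection estimate of Lemma \ref{planedist}, the bilipschitz control of Lemma \ref{projection}, and the telescoping inequality (\ref{sumangle}) — continues to hold for $H(E_i^0)$ up to an additional additive error of size $O(e^{-\kappa R})$, which is absorbed into the terms $\max\{e^{\tau_\ell-R},R^{-b}\}$ provided $\kappa$ is chosen with $\kappa\leq 1$ (or, more safely, we simply add a uniformly summable error $Ce^{-\kappa R}$ per band and note $m\leq 4R$).

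Concretely, first I would fix the constant $\kappa$ (say $\kappa=1$, matching the exponent in Lemma \ref{planedist}) so that the ambient $e^{-\kappa R}$-deviations are dominated by the geometric quantities already appearing. Then, given $\epsilon>0$, let $\delta_2=\delta_2(\epsilon)$, $R_2=R_2(\epsilon)$ be as in Lemma \ref{anglecontrol}, applied with $\epsilon/2$ in place of $\epsilon$. For an $(m,\delta)$-admissible chain $E=E_1\cup\dots\cup E_m$ of approximate bands I would pass to the associated chain $E^0=E_1^0\cup\dots\cup E_m^0$ of genuine $\delta$-twisted ruled bands; one checks that $E^0$ is again an $(m,\delta')$-admissible chain with $\delta'$ only slightly larger than $\delta$ (the well-attachedness constants change by $O(e^{-\kappa R})$, which is $\ll R^{-b}$, so the hypothesis $(\delta,R^{-b})$-well attached is preserved after enlarging $\delta$ marginally). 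Running the argument of Lemma \ref{anglecontrol} on $E^0$ gives, for the relevant intrinsic geodesic, length at most $(1+\epsilon/2)d(\,\cdot\,,\cdot\,)$ between the projected endpoints in $H(E^0_{i_0})$; transporting this back to $E$ costs at most another multiplicative factor $(1+O(e^{-\kappa R}))$ from the $e^{-\kappa R}$-closeness of $E$ to $E^0$ together with Lemma \ref{projection} applied to both. Choosing $R_3=R_3(\epsilon)\geq R_2$ large enough that $(1+\epsilon/2)(1+O(e^{-\kappa R_3}))\leq 1+\epsilon$, and setting $\delta_3(\epsilon)$ to be the resulting admissible perturbation of $\delta_2$, yields the claim.

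The main obstacle is bookkeeping rather than anything conceptual: one must verify carefully that the additive $e^{-\kappa R}$ errors introduced at each of the $m\leq 4R$ bands, when summed, still telescope against $\sum_\ell \max\{e^{\tau_\ell-R},R^{-b}\}$ and do not accumulate uncontrollably — this is where the choice $\kappa\leq 1$ and the bound $m\leq 4R$ are used, since $4R\cdot e^{-R}$ is still exponentially small. A secondary point requiring care is that the "seam" of an approximate band is defined as the shortest geodesic between its long sides rather than as a subarc of a distinguished hyperbolic plane, so one should check that this seam lies within $O(e^{-\kappa R})$ of the seam $\nu^0_0$ of $E^0$; this follows from convexity of the distance function in $\tilde M$ together with the $e^{-\kappa R}$-closeness of the long sides. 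Once these two points are checked, the proof is a verbatim repetition of the proof of Lemma \ref{anglecontrol} with every occurrence of $H(E_\ell)$ replaced by $H(E_\ell^0)$ and every estimate weakened by a summably small additive constant, so I would simply state that and refer the reader to the preceding proof.
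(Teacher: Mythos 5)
The paper's own justification for this corollary is a single sentence --- ``As in Lemma \ref{anglecontrol} we conclude'' --- and your proposal is a correct fleshing-out of exactly that remark: you run the telescoping argument of Lemma \ref{anglecontrol}, observing that the $O(e^{-\kappa R})$ Hausdorff-closeness of each approximate band $E_i$ to its model band $E_i^0$ only perturbs the estimates of Lemmas \ref{planedist} and \ref{projection} and the sum \eqref{sumangle} by additive errors that are absorbed by the terms $\max\{e^{\tau_\ell-R},R^{-b}\}$ (and are in any case summable over the at most $4R$ bands). The one small imprecision is your claim that the model bands $E_i^0$ ``form an $(m,\delta')$-admissible chain'': they generally do not share boundary geodesics, so $E^0$ is not literally a chain; the cleaner phrasing is the one you give at the end, namely that one repeats the proof of Lemma \ref{anglecontrol} verbatim, replacing $H(E_\ell)$ by $H(E_\ell^0)$ throughout and weakening each estimate by a summably small additive error. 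With that rewording your argument coincides with the intended proof.
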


\section{Surfaces glued from skew pants}\label{skewpants}

In this section we glue skew-pants
to surfaces and investigate their geometry.
We continue to 
use the assumptions and notations from Section \ref{twisted} and 
Section \ref{constructing}.

Let $P\subset M$ be a skew pants defined
by an $(R,\delta)$-well connected pair of framed  
tripods $x=((v_1,v_2,v_3),E)$, 
$y=((w_1,w_2,w_3),F)$ with footpoints
$p,q$. For the remainder of the section, only the 
real planes defined by the tripods 
$(v_1,v_2,v_3)$ and 
$(w_1,w_2,w_3)$ are relevant, so we drop the information on the
frames $E,F$.

From the $(R,\delta)$-well connected
tripods $x,y$ we construct a ruled surface in $M$ 
in the homotopy class of $P$ as follows.
%an ordered sixtuple 
%\[Q_0(x,y)\in {\cal S}^6\] 
%of points 
%in ${\cal S}$ whose basepoints are oriented tangents of 
%the boundary geodesics of the skew-pants 
%$P$, two points over each cuff, as follows.

Let $\alpha$ be a boundary geodesic of $P$.
It contains in its $\kappa_4 e^{-R}$-neighborhood 
a long side of each of 
the immersed hexagons $H_R(v_1,v_2,v_3)$ and $H_R(w_1,w_2,w_3)$. 
Here $\kappa_4>0$ is as in Section \ref{constructing}.
In particular, there is a geodesic arc $\xi_{p,\alpha}:[0,s]\to M$ 
connecting $p=\xi_{p,\alpha}(0)$ 
to a point $\xi_{p,\alpha}(s)$ on $\alpha$ which
meets $\alpha$ orthogonally at 
$\xi_{p,\alpha}(s)$ 
and which is determined by the homotopy type of $P$ as follows. 
Lift the hexagon $H_R(v_1,v_2,v_3)$ 
locally isometrically to a totally geodesic embedded 
hexagon $\tilde H$ in $\tilde M$, and lift
$\alpha$ to a geodesic line $\tilde \alpha$ 
whose $\kappa_4e^{-R}$-neighborhood contains a long
side of $\tilde H$. 
Let $\tilde\xi_{p,\alpha}$ be the shortest geodesic
connecting the center of the hexagon $\tilde H$ to 
$\tilde \alpha$ and let $\xi_{p,\alpha}$ be the 
projection of $\tilde \xi_{p,\alpha}$ to $M$.

%By construction, the angle between 
%$-\xi_{p,\alpha}^\prime(s)$ and the 
%orthogonal complement of the $\mathbb{K}$-line spanned
%by the tangent of $\alpha$ is at most $Ce^{-R}$. 
%In particular,
%the orthogonal projection 
%$w_{p,\alpha}$ of $-\xi_{p,\alpha}^\prime(s)$ into this
%orthogonal complement is well defined. 
%Define similarly a geodesic $\xi_{q,\alpha}$ connecting
%the foot point $q$ of the tripod $y$ to $\alpha$ and a vector
%$w_{\alpha,q}$ in the orthogonal complement of 
%the $\mathbb{K}$-span of $\alpha^\prime$.

%Let $Q_0(x,y)$
%be the ordered six-tuple of all vectors over 
%each of the three boundary geodesics of $P$  
%which are constructed in this way for each boundary geodesic of $P$.
%Here the order is defined as follows. The first vector from the
%tuple is contained in the fibre of the bundle 
%${\cal S}$ over the boundary geodesic $\alpha$ defined by
%the two good connections between $v_1,w_1$ and 
%$v_2,w_2$. Moreover, it is determined by the projection of the
%hexagon $H_R(v_1,v_2,v_3)$. The second point is also contained
%in ${\cal S}\vert \alpha$ etc.

Let $(\xi_{p,\alpha},\xi_{p,\beta},\xi_{p,\gamma})$ be the triple of 
these geodesic arcs in $M$ 
connecting the footpoint $p$ of the tripod 
$(v_1,v_2,v_3)$ to the three boundary geodesics 
$\alpha,\beta,\gamma$ of the
skew pants.
%We call the endpoints of these
%geodesics the \emph{projections of the footpoint of the 
%tripod $(v_1,v_2,v_3)$}. These projections depend on the 
%choice of the tripods $(v_1,v_2,v_3)$ and 
%$(w_1,w_2,w_3)$.
The geodesic arcs in $M$ which are homotopic to 
$\xi_{p,i}\circ \xi_{p,j}^{-1}$ with fixed endpoints 
$(i\not=j\in \{\alpha,\beta,\gamma\})$ define 
a geodesic triangle $\partial \Sigma$. Since the $\kappa_4e^{-R}$-neighborhoods
of the boundary geodesics of the skew-pants $P$ 
contain the long sides of the hexagon $H_R(v_1,v_2,v_3)$, 
by convexity the side lengths of $\partial \Sigma$ 
are within $2\kappa_4e^{-R}$ of the 
number $2r$ used in the definition of a $\delta$-twisted band of size $R$
in Section \ref{twisted}.

Choose a vertex $z$ of $\partial \Sigma$ 
and connect this vertex to each point on the opposite
side by a geodesic arc whose homotopy class is 
determined by the homotopy classes of the 
two sides of $\partial \Sigma$
which are incident on $z$.
This defines a ruled surface $\Sigma\subset M$
with boundary $\partial \Sigma$ 
(which however depends on the
choice of a vertex of $\partial \Sigma$). 
We call such a ruled
surface a \emph{center triangle} for the skew-pants. 
Thus each pair of well connected framed tripods defines
a skew-pants together with the choice of two 
center triangles. By convexity and the fact that the
hexagon $H_R(v_1,v_2,v_3)$ is totally geodesic,
such a ruled triangle is contained in the
$\kappa_4e^{-R}$-neighborhood of $H_R(v_1,v_2,v_3)$.

Up to modifying the skew-pants $P$ by a homotopy with fixed boundary,
we may assume that the center triangles are embedded in $P$.
Then the complement of these center triangles in 
$P$ consists of three rectangles. The boundary of 
each such rectangle
is composed of four sides which are geodesic segments in $M$. 
Two sides are sides of a
center triangle, the other two sides are geodesic
subarcs of the boundary geodesics of $P$.
We call the sides contained in the center triangle
the \emph{short sides} of the rectangle, the other two
sides are called the \emph{long sides}.

Parametrize the short sides of such a rectangle $Q$ 
proportional to 
arc length on $[-r,r]$. Use this parametrization to
construct a ruled surface with boundary $Q$ and 
with ruling 
containing the long sides
in the boundary of $P$. 
By construction, there is a number $\nu<\kappa_4\delta$
such that this ruled surface is an 
approximate $\nu$-twisted ruled band
of size $6R-2\tau+\chi$ for a number
$\chi\in [-\delta,\delta]$, where $\tau>0$ is the distance
of the center of an equilateral triangle of side length
$2\tau$ in the hyperbolic plane to each of its sides.
The long
sides of these bands are subsegments of the boundary geodesics
of $P$. 

The three ruled bands are glued to the 
center triangles along the short sides of their boundary. 
The union of the
three ruled bands and the two center triangles 
defines a piecewise ruled surface which is a pair of pants
with geodesic boundary.
We call such a piecewise ruled surface 
a \emph{$(R,\delta)$-geometric skew-pants}, or
simply a geometric skew pants if we do not have to 
specify the size parameters $(R,\delta)$.

A geometric skew pants $P$ is a pair of pants with 
a piecewise smooth
Riemannian metric with geodesic boundary.
This piecewise smooth metric 
defines a path metric on $P$. 
Lemma \ref{localcat} 
shows that this path
metric is locally ${\rm CAT}(-1)$.
Note that an $(R,\delta)$-skew pants $P$ 
can be equipped with a structure of 
a geometric $(R,\delta)$-skew pants, but such a structure
is not unique.
 
An $(R,\delta)$-skew pants $P$ has three boundary geodesics.
Each pair of such geodesics is connected by a shortest
geodesic arc in the homotopy class defined by the 
skew-pants. These geodesic
arcs are called the \emph{seams} of $P$.

Each boundary geodesic contains an endpoint of 
precisely two seams, and
these endpoints decompose the boundary geodesic into
two subarcs of roughly the same length. By Lemma \ref{midpoint}
and comparison, 
the angle between the direction of a seam at a point in 
a boundary geodesic $\gamma$ and a
direction in the $\mathbb{K}$-orthogonal complement of 
$\gamma^\prime$ is exponentially small in $R$.

Recall that by convention, a skew-pants is oriented and hence
each of its boundary geodesics is oriented as well.
Following Section \ref{twisted}, we can now define

\begin{definition}\label{wellattachedpants}
For a number $\sigma>0$, two skew-pants $P,P^\prime$
are \emph{$\sigma$-well attached} along a common boundary geodesic
$\beta$ if the following holds true.
\begin{enumerate}
\item[$\bullet$] The orientations of $\beta$ as a boundary geodesic
of $P$ and $P^\prime$ are opposite.
\item[$\bullet$] Let $x$ be an endpoint of a seam of $P$ on
$\beta$. Then there is an endpoint $y$ of a seam of 
$P^\prime$ on $\beta$ whose oriented distance to 
$x$ is contained in the interval $[1-\sigma,1+\sigma]$.
\item[$\bullet$] Let $v_1$ be the direction of the
seam of $P$ at $x$ and let $v_2$ be the direction of the
seam of $P^\prime$ at $y$; then the angle
between $v_2$ and the image of $-v_1$ under parallel transport
along the oriented subarc of $\beta$ connecting $x$ to $y$ is 
at most $\sigma$.  
\end{enumerate}
\end{definition}

As before, let $b>1$ be a fixed number. Recall from Section \ref{twisted}
the definition of an admissible chain for this number $b>1$.
We are now ready to show

\begin{proposition}\label{incompressible}
There are numbers $\delta_4 \in (0,\pi/4],R_4>10$ 
with the following property.
Let $R>R_4$ and let 
$S\subset M$ be a piecewise immersed closed surface composed
of finitely many $(R,\delta_4)$-skew pants which are 
$R^{-b}$-well attached along their common boundary geodesics.
Then $S$ is incompressible.
\end{proposition}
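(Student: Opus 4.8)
The plan is to reduce everything to Proposition \ref{cannonmap}. First I would equip $S$ with a geometric structure: each constituent $(R,\delta_4)$-skew pants admits a structure of a geometric $(R,\delta_4)$-skew pants (as explained before Definition \ref{wellattachedpants}), so $S$ carries a piecewise smooth metric with geodesic pants curves and Gauss curvature at most $-1/2$; after rescaling, the hypotheses of Lemma \ref{localcat} hold (the seams provide an embedded geodesic graph, the cone angles at its vertices are $\geq 2\pi$ by the Aleksandrov angle estimates for the hexagons and the small breaking angles), so $S$ carries a locally ${\rm CAT}(-1)$ metric $d_{\tilde S}$ whose distinguished pants decomposition $\mathcal P$ consists of the common boundary geodesics. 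Let $f:S\to M$ be the defining piecewise isometric immersion and $F:\tilde S\to\tilde M$ its canonical lift. By Proposition \ref{cannonmap} it suffices to prove that $F$ is proper.

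Next I would record the combinatorial geometry of $(\mathcal P,d_{\tilde S})$. The pants curves have length $2L(R)+8R$ up to an additive error controlled by $\delta_4$ (Section \ref{constructing}), the seams cut each boundary geodesic into two subarcs of almost equal length, and the shear of every pants curve lies in $[1-R^{-b},1+R^{-b}]$ because the pants are $R^{-b}$-well attached (Definition \ref{wellattachedpants}); after rescaling so that the common curve length becomes $R'=2L(R)+8R$ this exhibits $\mathcal P$ as an $(R',\delta')$-tight pants decomposition for a suitable $\delta'\to 0$. Since a geometric skew pants is ${\rm CAT}(-1)$-close to a totally geodesic hyperbolic pair of pants once $R$ is large and $\delta_4$ small, Lemma \ref{realhyp} shows $\mathcal P$ is centrally $C_0/2$-thick for the constant $C_0$ of that lemma, so Lemma \ref{longarcs} applies and yields $\chi>0$ with $f(\zeta)<\chi$ for every geodesic arc $\zeta$ in $\tilde S$ of length $<\min\{1/4,C_0/2\}$. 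To each thin band $E$ of each geometric skew pants I attach the totally geodesic totally real hyperbolic plane $H(E)\subset\tilde M$ of Remark \ref{remarkone}; by convexity of $H(E)$ together with Lemma \ref{planedist} and the comparison estimates in its proof, $F$ maps the lift of $E$ into the $C(\delta_4+e^{-\kappa R})$-neighbourhood of a lift of $H(E)$. Bands of adjacent pants meeting along a common pants curve are $(\delta_4,R^{-b})$-well attached (Definition \ref{wellattachedpants} implies Definition \ref{wellattached5}), and two bands of one pants are separated only by a center triangle of diameter $O(r)$; hence any geodesic segment $\zeta$ in $\tilde S$ that avoids the short sides of all bands is traversed along a chain of bands which, after inserting the bounded center triangles, is a lift of an $(R,\delta_4)$-admissible chain. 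Corollary \ref{anglecontrol2} then gives, for prescribed $\epsilon>0$ and all $R>R_4(\epsilon)$, $\delta_4<\delta_4(\epsilon)$, that $\ell(\zeta)\leq (1+\epsilon)\,d(F(x),F(y))$ whenever $\zeta$ joins $x$ to $y$, has length at most $\min\{1/4,C_0/2\}$, and avoids short sides.

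To upgrade this to properness I would argue by contradiction: if $F$ is not proper there are $x_n\in\tilde S$ with $d_{\tilde S}(x_0,x_n)\to\infty$ but $\{F(x_n)\}$ bounded; let $\zeta_n$ be the $\tilde S$-geodesic from $x_0$ to $x_n$. Using Lemma \ref{longarcs} together with central $C_0/2$-thickness one shows that a geodesic of length $L$ meets only $O(L)$ short sides and center triangles, so $\zeta_n$ decomposes into $k_n$ subarcs, each of length at most $\min\{1/4,C_0/2\}$ and avoiding short sides, with $k_n$ comparable to $\ell(\zeta_n)\to\infty$. The path $F\circ\zeta_n$ stays within a uniform tubular neighbourhood of a chain of totally geodesic hyperbolic planes $H_1,\dots,H_{k_n}$ in which consecutive planes lie within $O(r)$ of each other and, across a pants curve, meet a common geodesic at angle at most $R^{-b}$ with shear near $1$. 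By the previous paragraph each restriction $F\circ\zeta_{n,j}$ is a $(1+\epsilon)$-quasigeodesic of definite length, and consecutive planes are uniformly transverse, so $F\circ\zeta_n$ is a \emph{local} quasigeodesic in the Gromov hyperbolic space $\tilde M$ at the fixed scale $\min\{1/4,C_0/2\}$; the local-to-global principle for quasigeodesics in ${\rm CAT}(-1)$ spaces then makes it a global $(\lambda,c)$-quasigeodesic with uniform constants, whence $d(F(x_0),F(x_n))\geq \lambda^{-1}\ell(\zeta_n)-c\to\infty$, contradicting boundedness. By Proposition \ref{cannonmap}, $f$ and hence $S$ is incompressible.

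The step I expect to be the main obstacle is this local-to-global passage. The scale $\min\{1/4,C_0/2\}$ at which $F\circ\zeta$ is a near-geodesic is fixed, so one must check it is large enough relative to the hyperbolicity constant of $\tilde M$ and to the quasigeodesic constants — this is precisely what forces $R_4$ large and $\delta_4$ small — and one must absorb the genuinely non-totally-geodesic contribution of the center triangles together with the exponentially small but accumulating errors at short-side crossings without destroying the quasigeodesic property. A clean way to organize this is to prove directly that the chain $\bigcup_j H_j$ carries a coarsely well-defined nearest-point projection with a uniform contraction estimate, which simultaneously encodes the uniform transversality of consecutive planes and yields the quasigeodesic conclusion for every path shadowing the chain; verifying that $R^{-b}$-well attachedness suffices for this contraction estimate is the technical core.
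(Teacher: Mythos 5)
Your proposal follows essentially the same route as the paper: equip $S$ with a geometric skew-pants structure, invoke Lemma \ref{localcat} to get a locally ${\rm CAT}(-1)$ metric, reduce incompressibility to properness of the canonical lift via Proposition \ref{cannonmap}, verify tightness and central thickness via Lemma \ref{realhyp} and Lemma \ref{longarcs}, feed the admissible-chain estimate of Corollary \ref{anglecontrol2} to control the thin parts, and finish by upgrading local near-geodesic behaviour of $F\circ\zeta$ to a global quasi-geodesic bound. You have correctly identified both the structure and the one genuinely nontrivial final step.

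The place where your write-up and the paper diverge (and where the paper is cleaner) is precisely that final step. Instead of tracking a chain of totally geodesic planes $H_1,\dots,H_{k_n}$ and proving a coarse nearest-point-projection contraction estimate for their union, the paper forgets the plane structure entirely once Corollary \ref{anglecontrol2} has been applied. It isolates a fixed scale $\rho_0>0$ (not depending on $R,\delta$) such that any $\tilde S$-geodesic arc either lies in an admissible chain or hits the $\rho_0$-thick part of a skew-pants; on the thick part a direct compactness argument for the five ruled pieces of a geometric skew-pants gives the $(1+\epsilon)$ bound, and on the chains Corollary \ref{anglecontrol2} gives it. Then the whole problem reduces to a clean statement about curves $\gamma$ in $\tilde M$ for which every subarc of length $\rho_0$ has endpoints at distance at least $\rho_0/(1+\epsilon)$: one approximates $\gamma$ by the piecewise geodesic through the points $\gamma(m\rho_0/2)$, uses angle comparison to see the breaking angles are small once $\epsilon$ is small, and then uses the elementary fact that a piecewise geodesic with segments of length $\geq\rho_0/4$ and uniformly small breaking angles is an $L$-quasi-geodesic in a ${\rm CAT}(-1)$ space. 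This avoids the usual local-to-global quasigeodesic theorem (whose threshold scale depends on the constants in a way you were rightly worried about) and is what actually forces the quantifier order $\rho_0$ first, then $\epsilon=\epsilon(\rho_0)$, then $R_4,\delta_4$. Your contraction-for-chains alternative could be made to work but is heavier; if you keep your framing, you still need to make precise how the fixed scale $\min\{1/4,C_0/2\}$ dominates the threshold in the local-to-global theorem for the quasigeodesic constants $(1+\epsilon,0)$, and the paper's piecewise-geodesic argument is the cleanest way to do exactly that.

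One smaller imprecision: you say a $\tilde S$-geodesic of length $L$ meets only $O(L)$ short sides and center triangles and then decompose $\zeta_n$ into short-side-avoiding subarcs. This is not quite the paper's bookkeeping; the paper instead observes that an arc which is not already in an admissible chain must pass through the $\rho_0$-thick part of a pants, and it is the thick part (not avoidance of short sides) that triggers the direct geometric estimate. Treating the center triangles as negligible "bounded diameter" insertions is plausible but the thick-part dichotomy is what makes the case analysis exhaustive and uniform in $R,\delta$.
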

\begin{proof} Let $\delta \in (0,\pi/4]$, let $R>10$ and let
$S\subset M$ be an immersed surface which is 
composed of finitely many $(R,\delta)$-skew pants. 
Equip each of these skew-pants 
with a structure of a geometric $(R,\delta)$-skew pants.
By construction and Lemma \ref{localcat}, the length metric on $S$
defined by the piecewise ruled pairs of pants 
is locally ${\rm CAT}(-1)$.
Thus this metric lifts to a ${\rm CAT}(-1)$-metric on 
the universal covering $\tilde S$ of $S$.

The $(R,\delta)$-skew pants define a geodesic pants decomposition
${\cal P}$ of $S$. Let $\sigma >0$.
By Lemma \ref{projection} 
and Lemma \ref{realhyp},
for sufficiently large $R$ and sufficiently small $\delta$,
say for all $\delta \leq \hat \delta$, 
this pants decomposition is $(R^\prime,\sigma)$-tight 
for some $R^\prime>0$ (here the constant $R^\prime$ is determined
by $R$ and the definition of $(R,\delta)$-skew pants)
and centrally $C_0/2$-thick where
$C_0>0$ is as in Lemma \ref{realhyp}.

For $\rho>0$ define the \emph{$\rho$-thick part} of a skew-pants $P$ to be
the set of all points $x$ so that the open metric ball of radius $\rho$ about
$x$ with respect to the intrinsic path metric does not intersect 
the boundary of the skew-pants. 
By Lemma \ref{longarcs}, there is a number
$\rho_0>0$ not depending on $R,\delta$ 
so that any geodesic arc $\zeta$ on $S$ which is not 
contained in an admissible chain of twisted bands meets
the $\rho_0$-thick part of some skew-pants (here as before, we assume that
$R>10$ is sufficiently large and that $\delta>0$ is sufficiently small).

Now recall that a skew-pants is a union of 5 ruled surfaces with 
geodesic boundary which are close to being totally geodesic immersed in 
$M$. By construction, a geodesic arc $\zeta$ which intersects the 
$\rho_0$-thick part of a skew-pants crosses through at most two of the
boundary arcs of such a surface. As the angle with which two of these ruled
surfaces meet at a common boundary geodesic tends to zero with 
$\delta$, we conclude that for a given fixed $\epsilon >0$ 
the following holds true. Let $\zeta$ be a geodesic arc of length
$\rho_0$ on $S$ which intersects the $\rho_0$-thick part of a 
skew-pants. Then for 
the length of the lift of $\zeta$ 
to $\tilde M$ is at most
$(1+\epsilon)$-times the distance in $\tilde M$ between its endpoints
provided that the number $\delta >0$
used in the construction is sufficientlly small.

Thus by Proposition \ref{cannonmap},
Corollary \ref{anglecontrol2} and the definition of a geometric
skew-pants, 
for the proof of the proposition it now suffices to show
that for every $\rho >0$ there is a number
$\epsilon =\epsilon(\rho)>0$ with the following property.
Let $\gamma:\mathbb{R}\to \tilde M$ be a piecewise smooth curve.
Assume that for every subarc $\gamma[t,t+\rho]$ 
of $\gamma$ of length $\rho$ we have 
$d(\gamma(t),\gamma(t+\rho))
\geq \rho/(1+\epsilon)$; then $\gamma$ is 
an $L$-quasi-geodesic in $\tilde M$ for a number
$L>1$ only depending on $\rho$ and $\epsilon$.

However, the existence of such a number $\epsilon>0$ follows from 
hyperbolicity. Namely, for $\rho>0$ let 
$\gamma:\mathbb{R}\to \tilde M$ be a piecewise smooth
curve as in the previous paragraph, and let 
$\hat\gamma$ be the piecewise geodesic 
in $\tilde M$ such that for all $m\in \mathbb{Z}$ we have
\begin{enumerate}
\item[$\bullet$]  
$\hat\gamma(m\rho/2)=\gamma(m\rho/2)$ and
\item[$\bullet$]
$\hat\gamma[m\rho/2,(m+1)\rho/2]$ is a geodesic
parametrized proportional to arc length.  
\end{enumerate}

For each $m$ let $\alpha(m)$ be the breaking
angle of the segments of $\hat \gamma$ which come together at
$\hat \gamma(m\rho/2)$.
By this we mean that $\pi-\alpha(m)$ is the angle at
$\hat\gamma(m\rho/2)$ of the triangle in $\tilde M$ 
with vertices
$\hat\gamma((m-1)\rho/2),\hat\gamma(m\rho/2),\hat\gamma((m+1)\rho/2)$).

By the assumption on $\gamma$, the lengths of 
the sides adjacent to $\hat\gamma(m\rho/2)$
of this triangle 
are contained in the interval
$[\rho/2(1+\epsilon),\rho]$, and 
the length of the opposite side is at least
$(1+\epsilon)^{-1}$ times the sum of the lengths of the adjacent
sides. By angle comparison, for any number $\delta>0$ there
is some $\epsilon=\epsilon(\delta)<1/2$ such that 
for this $\epsilon$, the breaking angles
$\alpha(m)$ do not exceed $\delta $. 

On the other hand, by hyperbolicity, there
is a number $\delta=\delta(\rho)>0$ and a number $L>1$ 
with the following property.
Let $\gamma:\mathbb{R}\to \tilde M$ be a piecewise geodesic
composed of geodesic segments of length at least 
$\rho/4$. 
If the breaking angles of $\gamma$ at the breakpoints do not exceed 
$\delta$ then $\gamma$ is an $L$-quasi-geodesic.

Together 
this shows the existence of a number
$\epsilon=\epsilon(\delta(\rho))$ as required above and completes the 
proof of the proposition.
\end{proof}

\section{The glueing  equation}\label{theglueing}

In this section we show that for $G\not=SO(2m,1)$ 
$(m\geq 1)$ it is possible to construct a closed 
immersed surface $S$ in $M=\Gamma\backslash G/K$ which
is composed of $(R,\delta_4)$-skew-pants 
for some $R>R_4$ in such a way that 
the assumptions in Proposition \ref{incompressible}
are satisfied.
Proposition \ref{incompressible} then implies that the 
surface $S$ is incompressible in $M$.

It is only in this section that we fully use the assumption that 
$M=\Gamma\backslash G/ K$ for a simple rank one 
Lie group $G$ and a cocompact torsion free lattice 
$\Gamma<G$.
First we use controlled rate of mixing for the 
frame flow on $M$ to construct sufficiently well distributed
$(R,\delta)$-skew pants with the method from
Lemma \ref{framedconst}. To specify the idea of good
distribution of these pants we equip them with a weight
function constructed from the Lebesgue measure on a
suitably chosen bundle over
the universal covering $\tilde M$  of $M$. 
The fact that this measure is invariant under the action of 
the entire group $G$
is essential for the argument.
We also use a property which only holds for 
simple rank one Lie-groups of 
non-compact type different from $SO(2m,1)$.
Namely, let $K_0<G$ be the 
compact stabilizer of a unit vector in $T^1\tilde M$
(which is a compact subgroup of the special orthogonal
group $SO(\ell-1)$ where $\ell>0$ is the real dimenison of $M$).
Then the 
component of the identity of the
centralizer of $A$ in $K_0$ 
contains 
$-{\rm Id}\in K_0<SO(\ell-1)$. This property does not
hold for $SO(2m,1)$. We refer to \cite{B12} for 
comments prior to this work why this property is useful 
for the construction of incompressible surfaces.

We begin with a general observation about the construction
of incompressible surfaces, not necessarily in locally
symmetric manifolds. To this end
fix a number $\delta<\delta_4$ and a 
number $R>R_4$ as in Proposition \ref{incompressible}.
We allow to decrease $\delta$ and increase $R$ 
throughout the construction. 

Let ${\cal P}(R,\delta)$ be the collection of all oriented
$(R,\delta)$-skew pants in $M$. 
The boundary of 
each such skew pants consists of 
a triple of closed geodesics whose lengths are
contained in the interval
$[8R+2L(R)-\delta,8R+2L(R)+\delta]$ 
(compare Section \ref{constructing}),
with properties as specified in the previous sections. 
%The seams of the skew pants are geodesic 
%arcs in $M$ whose lengths are
%smaller than the injectivity radius of $M$. 
Since for every $k>0$ there are only finitely many 
closed geodesics in $M$ of length at most $k$, 
the set ${\cal P}(R,\delta)$ is finite.
If $P$ is a geometric skew pants defining a skew-pants
in ${\cal P}(R,\delta)$ then we write 
$P\in {\cal P}(R,\delta)$ although by definition, 
a skew pants in ${\cal P}(R,\delta)$ is not equipped with
a preferred geometric structure.

%Note however that for a boundary geodesic $\gamma$ of 
%a skew-pants $P$,
%the twisting numbers $\delta_i$ of the ruled bands
%in $P$ with one long side in $\gamma$ are not determined
%by $\gamma$ up to an error which is exponentially small in $R$. 

For $b>1$ and $\delta<\delta_4$ 
as in Proposition \ref{incompressible} 
define a graph ${\cal G}(R,\delta)$ whose vertex set is the
set ${\cal P}(R,\delta)$ and where two such vertices $P_1,P_2$ are connected
by an edge if the following two 
properties hold true.
\begin{enumerate} 
\item $P_1,P_2$ have precisely one cuff 
$\gamma$ in common.
\item 
$P_1,P_2$ are $R^{-b}$-well attached along $\gamma$.
\end{enumerate}

We label the edge in ${\cal G}(R,\delta)$ 
connecting the vertices $P_1$ and $P_2$ with the 
common cuff $\gamma\subset P_1\cap P_2$ (here $\gamma$
is viewed as an unoriented geodesic).
Note that the requirement (2) above
does not give any restriction on the homotopy class of 
a geodesic arc with endpoints on $\gamma$ which
determines the homotopy classes of 
the two boundary geodesics of $P_i$
distinct from $\gamma$ $(i=1,2)$.

%$P\in {\cal P}(R)$ and every boundary geodesic $\gamma$ of 
%$P$ there is some $P^\prime\in {\cal P}(R)$ which is connected
%to $P$ by an edge with label $\gamma$.

For each vertex $P$ of ${\cal G}(R,\delta)$, the edges of ${\cal G}(R,\delta)$
incident on $P$ are labeled with three distinct labels $1,2,3$ corresponding
to the three distinct cuffs of $P$. Let ${\cal E}_i(P)$ be the set of 
edges with label $i$ 
$(i=1,2,3)$.
%By Lemma \ref{manyedge}, 
%each of the sets ${\cal E}_i(P)$ contains at least ten edges.

\begin{definition}
An \emph{admissible weight function} $f$ assigns a real valued weight
to each edge of ${\cal G}(R,\delta)$. These weights satisfy the 
following \emph{glueing equations}: 
For each vertex $P$ of ${\cal G}(R,\delta)$,
there are three glueing equations
(one being a consequence of the other two) 
\[\sum_{e\in {\cal E}_i(P)}f(e)=\sum_{e\in {\cal E}_{i+1}(P)}f(e).\]
\end{definition}
Here the index $i$ is taken modulo three. 
We call an admissible weight function
a \emph{solution to the glueing equation}.

\begin{lemma}\label{solution1}
If there is a non-negative non-trivial admissible weight function
then there is a non-negative non-trivial integral admissible weight
function.
\end{lemma}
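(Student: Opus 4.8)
The plan is to view the admissible weight functions as the solution set of a homogeneous linear system with integer coefficients, to view the non-negative ones as a rational polyhedral cone, and then to invoke the elementary polyhedral fact that a nonzero rational cone contains a nonzero integral vector.

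Concretely, let $E$ be the (finite) edge set of ${\cal G}(R,\delta)$, so a weight function is an element of $\mathbb{R}^{E}$. For each vertex $P$ the glueing equations $\sum_{e\in {\cal E}_i(P)}f(e)=\sum_{e\in {\cal E}_{i+1}(P)}f(e)$ are linear in the coordinates $f(e)$ with coefficients in $\{-1,0,1\}$. Hence the set $V\subset \mathbb{R}^{E}$ of all admissible weight functions is a linear subspace defined over $\mathbb{Q}$, and the non-negative admissible weight functions form the cone $C=V\cap \mathbb{R}^{E}_{\ge 0}$. Writing the finitely many linear conditions that cut $C$ out of $\mathbb{R}^{E}$ as $\langle a_j,y\rangle\ge 0$ with rational vectors $a_j$, we note that $C$ is a rational polyhedral cone; moreover $C$ is pointed, since $y\in C$ and $-y\in C$ force $y=0$, and by hypothesis $C\neq\{0\}$.

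It remains to exhibit a nonzero integral point of $C$. Let $W=\operatorname{span}(C)$ be its linear span. By the standard description of the relative interior of a polyhedral cone, $W$ is the solution set of the equations $\langle a_j,y\rangle=0$ for those indices $j$ at which $\langle a_j,\cdot\rangle$ vanishes identically on $C$; since all $a_j$ are rational, $W$ is a rational subspace of $\mathbb{R}^{E}$, and $\operatorname{relint}(C)$ is a nonempty relatively open subset of $W$. By density of $\mathbb{Q}^{E}\cap W$ in $W$ there is a rational vector $v\in\operatorname{relint}(C)$, and $v\neq 0$ because the pointed nonzero cone $C$ is not a linear subspace, so $0\notin\operatorname{relint}(C)$. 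Clearing denominators, $Nv$ for a suitable positive integer $N$ is a non-negative non-trivial integral admissible weight function. (Equivalently, one can apply the rational Minkowski--Weyl theorem: $C$ is generated as a cone by finitely many rational vectors, at least one of which is nonzero.)

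I expect no genuine obstacle here; the whole point is the standard fact that rationality of the defining data of a cone passes to its rational points and extreme rays. The only step needing a little attention is that $\operatorname{span}(C)$, rather than the ambient $V$, is defined over $\mathbb{Q}$, which is what allows density of the rationals to be used inside the relative interior of $C$.
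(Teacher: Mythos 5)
Your proof is correct and follows essentially the same route as the paper: the admissible weight functions form a rational linear subspace of $\mathbb{R}^E$, the non-negative ones form a rational polyhedral cone, and a nonzero rational cone contains a nonzero rational, hence after scaling integral, point. The paper passes directly from rationality of the defining hyperplanes to the existence of a nontrivial rational solution; your more detailed argument via the rational span of the cone and density of rationals in its relative interior (or the rational Minkowski--Weyl theorem) supplies the justification the paper leaves implicit.
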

\begin{proof} Since the coefficients of the glueing
equations are integral, 
each gluing equation cuts out a rational hyperplane in the space of 
all weight functions on the set ${\cal E}$ 
of edges of ${\cal G}(R,\delta)$. 
Thus if there is a non-negative non-trivial 
admissible weight function, then there is a non-negative
non-trivial admissible weight function with rational
weights, and such a function can be multiplied with an
integer to yield a non-negative integral admissible weight function.
This shows the lemma.
\end{proof}

\begin{proposition}\label{solution}
%There are numbers $R_6>10$, $\delta_6 >0$ with the 
%following property. 
Each non-negative non-trivial integral admissible weight function
for ${\cal G}(R,\delta)$ 
defines an incompressible surface in $M$.
\end{proposition}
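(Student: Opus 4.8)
The statement is a formal consequence of Proposition \ref{incompressible} once the surface has been assembled, so the proof will consist mainly of a combinatorial glueing construction. Let $f$ be a non-negative non-trivial integral admissible weight function for ${\cal G}(R,\delta)$, where we take $\delta=\delta_4$ and $R>R_4$. For a vertex $P$ the three glueing equations show that the integer
\[n(P)=\sum_{e\in {\cal E}_i(P)}f(e)\]
does not depend on $i\in\{1,2,3\}$, and since $f$ is non-negative and non-trivial we have $n(P)>0$ for at least one vertex. Fix for each vertex $P$ a structure of a geometric $(R,\delta_4)$-skew pants on $P$ — so that $P$ becomes an oriented pair of pants carrying a piecewise ruled metric with geodesic boundary, together with a piecewise isometric immersion into $M$ taking the three cuffs onto the corresponding boundary geodesics — and take $n(P)$ isometric copies of this geometric pair of pants.

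Next I would glue these copies along their cuffs. For a fixed vertex $P$ and a fixed cuff label $c\in\{1,2,3\}$, the weights of the edges in ${\cal E}_c(P)$ sum to $n(P)$; hence one may choose, for each $e\in {\cal E}_c(P)$, a set of $f(e)$ among the $n(P)$ copies of the cuff $c$ of $P$, these sets being pairwise disjoint and together exhausting all $n(P)$ copies. An edge $e$ joining vertices $P$ and $P'$, say with label $c$ at $P$ and $c'$ at $P'$ (possibly $P=P'$ and $c\neq c'$, which can happen since skew pants are only immersed and two of the cuffs of one pants may be carried by the same closed geodesic of $M$), then has attached to it a set of $f(e)$ copies of the cuff $c$ of $P$ and a set of $f(e)$ copies of the cuff $c'$ of $P'$; fix a bijection between these two sets and, for each matched pair, glue the two corresponding boundary circles. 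Both of these circles are mapped isometrically onto the closed geodesic $\gamma\subset M$ which is the common cuff of $e$, so the identification is canonical once the bijection is fixed, and by the first condition in Definition \ref{wellattachedpants} it reverses the boundary orientations of $\gamma$. Performing this for every edge of ${\cal G}(R,\delta)$ uses each copy of each cuff exactly once, so the result is a closed surface $S$ carrying a piecewise smooth metric and a piecewise isometric immersion into $M$; $S$ is oriented because every glueing is orientation reversing, $\chi(S)=-\sum_P n(P)<0$ so each connected component has genus at least two, and the metric on $S$ is locally ${\rm CAT}(-1)$ by Lemma \ref{localcat}.

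By construction $S$ is a piecewise immersed closed surface composed of finitely many $(R,\delta_4)$-skew pants which, along each of their common boundary geodesics, are glued according to an edge of ${\cal G}(R,\delta)$ and hence are $R^{-b}$-well attached there. Since $R>R_4$, Proposition \ref{incompressible} applies to every connected component of $S$ and shows that it is incompressible; in particular $S$ defines an incompressible surface in $M$. The construction involves no essential difficulty beyond the results already established; the only points that require some attention are the bookkeeping that extracts a consistent global pairing of cuffs from the glueing equations — in particular the treatment of multiple edges and of edges joining a skew pants to itself — and the routine verification that the glued object is indeed a closed surface composed of skew pants in precisely the sense demanded by Proposition \ref{incompressible}.
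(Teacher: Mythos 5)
Your construction is essentially the one in the paper: choose $n(P)$ copies of each skew pants (the paper calls this number $\ell(v)$), distribute and match cuff copies according to the edge weights using the glueing equations, observe that the result is a closed oriented surface composed of well-attached $(R,\delta)$-skew pants, and invoke Proposition~\ref{incompressible}. Your write-up is somewhat more explicit about the bookkeeping (multiple edges, self-glueings, the Euler characteristic count), but the argument coincides with the paper's proof.
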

\begin{proof}  
Let $f$ be a non-negative non-trivial 
integral admissible weight function
for ${\cal G}(R,\delta)$.
Let $v_1,\dots,v_k$ be those vertices of ${\cal G}(R,\delta)$ 
which are adjacent to edges with positive weight. For each such vertex $v$ 
let \[\ell(v)=\sum_{e\in {\cal E}_1(v)}f(e).\]
Choose $\ell(v)$ copies of the skew-pants $P_v$ corresponding to 
$v$. For each cuff $\gamma_i$ of $P_v$ connect these copies
to copies of the skew-pants with the same cuff $\gamma_i$ 
as prescribed by the weight: if the edge $e$ connects $v$ to 
$v^\prime$ then attach $f(e)$ copies of $P_v$ to $f(e)$ copies of 
$P_{v^\prime}$. By the glueing equation, this can be done in 
such a way that each cuff of each of the skew  pants $P_v$ 
is glued to precisely one cuff of a neighboring pants, and
orientations of these skew pants match. 
As the consequence, the union of these skew-pants defines the homotopy class
of a closed oriented surface $S$ in $M$.
By Proposition \ref{incompressible}, this surface
is incompressible.
\end{proof}

We are left with showing the existence of 
a non-negative non-trivial solution to the 
glueing equation. This is the most subtle part of the 
construction, and it is accomplished using ideas 
from \cite{KM12}. 
%
%The outline of the argument is as follows. 
%Let $\gamma$ be a closed geodesic which arises as an
%oriented  boundary
%geodesic of some skew-pants $P\in {\cal P}(R,\delta)$.
%The feet and the seams of $P$ define an unordered six-tuple 
%of points 
%\[Q(P)\in {\cal S}^6\]
%which consists of the set of projections of the directions of the 
%seams of $P$ to the bundle of real directions over the 
%boundary geodesics. 
%Each structure of a geometric skew pants for $P$ is 
%defined by two well-connected framed tripods.
% and
%these framed tripods $x,y$ define
%a six-tuple of points $Q(x,y)$ in the 
%bundle ${\cal S}\to T^1M$ whose foot points are contained in
%the oriented tangents of the
%boundary geodesics of the skew-pants.
%Now assume that 
%we can find a weight function on
%${\cal P}(R)$ so that for a fixed boundary geodesic $\gamma$,
%the set of all 
%weighted real directions which are obtained 
%as the images under the map $Q$ of the real directions
%of some geometric realization of the skew-pants
%has the following property. The weighted set, viewed 
%as a sum of Dirac masses on ${\cal S}\vert\gamma$   
%is roughly equidistributed for the Lebesgue measure
%on ${\cal S}\vert \gamma$, 
%and it is invariant under 
%reversing the orientation of $\gamma$. Then by the results in 
%\cite{KM12}, these skew pants 
%can be partitioned into pairs which are well-attached and 
%hence we obtain an admissible weight function as desired.

Let $\lambda$ be the normalized  
Lebesgue measure (of volume one) on the
bundle ${\cal F}\to T^1M\to M$ 
of orthonormal $\mathbb{K}$-frames.
Recall that ${\cal F}$   
is an $SO(n-1)$-principal bundle
(or $SU(n-1)$-principal bundle  
or $Sp(n-1)$-principal bundle
or ${\rm Spin}(7)$-principal bundle) 
over the smooth closed manifold $T^1M$. 
The measure $\lambda$ lifts to a $G$-invariant Radon measure
$\tilde \lambda$ on the bundle 
$\tilde {\cal F}\to T^1\tilde M\to \tilde M$ 
of orthonormal $\mathbb{K}$-frames in 
$T\tilde M$. The group $G$ 
acts simply transitively on $\tilde {\cal F}$.

By Lemma \ref{framedconst}, we can construct $(R,\delta)$-skew pants
by connecting framed tripods with arcs obtained from orbit
segments for the frame flow which begin and end uniformly near
the tripods. To make the idea of 
being uniformly near quantitiative we first construct for each
frame $F\in {\cal F}$ a neighborhood in an 
$G$-equivariant way.
To this end 
let now $\delta <\delta_4/2$.  
Choose a point $\tilde z\in \tilde {\cal F}$ and 
a smooth function 
$f_{\tilde z}:\tilde {\cal F}\to [0,\infty)$ 
which is supported in the
$\delta$-neighborhood of $\tilde z$.
We assume that 
\[\int f_{\tilde z}d\tilde\lambda=1.\]

For $\tilde u\in \tilde {\cal F}$ let 
$\psi\in G$ be an isometry 
which maps $\tilde u$ to $\tilde z$ and define 
$f_{\tilde u}=f_{\tilde z}\circ \psi$. 
Via the projection 
\[Q:\tilde {\cal F}\to {\cal F},\] 
the functions $f_{\tilde u}$ 
project to functions
$f_u$ on the frame bundle ${\cal F}\to T^1M\to M$ 
which are defined
as follows. For $u\in {\cal F}$ choose some $\tilde u$ with 
$q(\tilde u)=u$ and put $f_u(v)=\sum_{Q(\tilde v)=v}f_{\tilde u}(\tilde v)$.
Note that this does not depend on any choices made.

Let 
\[{\cal F\cal T}\to M\] be the bundle of framed tripods over
$M$ 
(see Section \ref{constructing} for the definition of a framed tripod).
Recall that for $R>1$ and $i=1,2,3$
a framed tripod $((v_1,v_2,v_3),F)$ 
defines a frame $F_i$ in the fibre of ${\cal F}$ over $\Phi^{R}v_i$. 
For a framed tripod $z=((v_1,v_2,v_3),F)
\in {\cal F\cal T}$ and a number $R>0$ 
define a function $b_{z,R}$ on 
the space ${\cal F}^3$ of triples of points in ${\cal F}$ 
by 
\[b_{z,R}(z_1,z_2,z_3)=\prod_{i=1}^3 f_{(\Phi^{R}v_i,F_i)}(z_i)\]
(here $(\Phi^{R}v_i,F_i)$ is a point in the
bundle ${\cal F}$ whose 
basepoint in $T^1M$ is the vector $\Phi^{R}v_i$).

The involution 
\[{\cal A}:{\cal F}\to {\cal F}\] 
which replaces the 
base vector of the frame and the first vector in the fibre
by its negative preserves the normalized
Lebesgue measure $\lambda$.

Denote by
$\underline{\Psi^t}$ the product frame flow on ${\cal F}^3$.
Then for suffciently large $R>0$, a five-tuple of points
$(x,y,u_1,u_2,u_3)\in {\cal F\cal T}^2\times {\cal F}^3$
which consists of a pair of 
framed tripods $(x,y)\in {\cal F\cal T}^2$ and some 
$(u_1,u_2,u_3)\in {\cal F}^3$ with  
\[b_{x,R}({\cal A}^3\underline{\Psi^R}(u_1,u_2,u_3))b_{y,R}(u_1,u_2,u_3)>0\]
determines an $(R,\delta)$-skew pants. 
Namely, the framed tripod $x$ determines 
the frames $F_1,F_2,F_3$. For each $i$, the frame $u_i$ is contained 
in the $\delta$-neighborhood of $F_i$, and its image under the
map ${\cal A}\Psi^R$ is contained in the
$\delta$-neighborhood of the frame determined by the framed tripod $y$.
Lemma \ref{framedconst} now shows that
$(x,y,u_1,u_2,u_3)$ defines an $(R,\delta)$-skew-pants. 

Our next goal is to observe that the $(R,\delta)$-skew pants constructed
in this way abound. To this end we use exponential mixing 
with respect to 
the Lebesgue measure of the frame flow
on the bundle ${\cal F}$. 
and we let $\lambda^3$ be the product measure on ${\cal F}^3$.
The volume of $\lambda^3$ equals one.

\begin{lemma}\label{mixing2}
There is a number $\kappa>0$ 
such that for any two framed tripods $x,y$ we have
\[\int b_{x,R}({\cal A}^3\underline{\Psi^R} u)
b_{y,R}(u)d\lambda^3(u)\geq 1-e^{-\kappa R}/\kappa.\]
\end{lemma}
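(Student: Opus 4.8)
The plan is to reduce the statement to an application of exponential mixing for the product frame flow $\underline{\Psi^t}$, which follows from Theorem \ref{mixing} applied to each factor together with the fact that a product of exponentially mixing flows is exponentially mixing (for product test functions this is immediate from the triangle inequality applied coordinate by coordinate). First I would rewrite the integral in a form amenable to mixing: the integrand is $b_{x,R}\circ {\cal A}^3\circ \underline{\Psi^R}$ times $b_{y,R}$, and since ${\cal A}$ preserves $\lambda$, the flow $\underline{\Psi^R}$ preserves $\lambda^3$, and ${\cal A}$ commutes with $\underline{\Psi^t}$ up to reparametrization by the structure of the frame bundle, we can write
\[\int b_{x,R}({\cal A}^3\underline{\Psi^R}u)\,b_{y,R}(u)\,d\lambda^3(u)
= \int (g_x\circ \underline{\Psi^R})\,b_{y,R}\,d\lambda^3\]
where $g_x = b_{x,R}\circ {\cal A}^3$ is again a nonnegative smooth function with $\int g_x\,d\lambda^3 = \int b_{x,R}\,d\lambda^3$.

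Next I would compute the ``main term'' $\int b_{x,R}\,d\lambda^3 \cdot \int b_{y,R}\,d\lambda^3$. By construction $b_{x,R}(z_1,z_2,z_3) = \prod_{i=1}^3 f_{(\Phi^R v_i, F_i)}(z_i)$, so by Fubini $\int b_{x,R}\,d\lambda^3 = \prod_{i=1}^3 \int f_{(\Phi^R v_i,F_i)}\,d\lambda$. Each function $f_u$ was defined precisely so that it is a $G$-equivariant push-forward of $f_{\tilde z}$, which has $\int f_{\tilde z}\,d\tilde\lambda = 1$; since $\lambda$ is the push-forward of $\tilde\lambda$ under $Q$ and the construction of $f_u$ via summing over the fibres of $Q$ is exactly compatible with this push-forward, one gets $\int f_u\,d\lambda = 1$ for every $u$. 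Hence $\int b_{x,R}\,d\lambda^3 = 1$, and likewise for $y$, so the main term equals $1$.

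Then I would invoke exponential mixing: applying the displayed estimate after Theorem \ref{mixing} (in its product form) to $f = g_x$, $g = b_{y,R}$, and $t = R$ gives
\[\Bigl|\int (g_x\circ\underline{\Psi^R})\,b_{y,R}\,d\lambda^3 - 1\Bigr| \le e^{-\kappa_0 R}\,\|g_x\|\,\|b_{y,R}\|/\kappa_0\]
for the mixing rate $\kappa_0$ of the product flow. It remains to bound the $C^k$-norms $\|g_x\|$ and $\|b_{y,R}\|$ uniformly in $x,y,R$. This is where $G$-equivariance of the whole construction pays off: $f_{\tilde z}$ is a fixed smooth function, each $f_{\tilde u}$ is obtained from it by precomposing with an isometry $\psi \in G$, and $G$ acts by isometries on $\tilde{\cal F}$, so $\|f_{\tilde u}\|_{C^k}$ is independent of $\tilde u$; the push-forward to ${\cal F}$ involves a bounded number of terms (the degree of $Q$ is bounded since $M$ is compact and the injectivity radius is bounded below), so $\|f_u\|_{C^k} \le \const$ uniformly. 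The only $R$-dependence enters through the base points $(\Phi^R v_i, F_i)$, but translating the support of a fixed-shape bump function along the flow does not change its $C^k$-norm in the ambient metric on the compact manifold ${\cal F}$ (the relevant derivative bounds on $\Phi^R$ restricted to the fixed-radius ball are uniform), so $\|b_{x,R}\| = \prod_i\|f_{(\Phi^R v_i,F_i)}\| \le \const$ independently of $x$ and $R$; the same applies to $g_x = b_{x,R}\circ{\cal A}^3$ since ${\cal A}$ is a fixed isometric involution. Absorbing these uniform constants into $\kappa_0$ yields a single $\kappa > 0$ with the asserted inequality.

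The main obstacle I anticipate is precisely this last uniformity claim: one must check carefully that the smooth norm of the bump functions $f_{(\Phi^R v_i, F_i)}$ does not grow with $R$. The subtlety is that although the shape of the bump in $\tilde{\cal F}$ is $G$-translated (hence norm-preserving there), the relevant norm for mixing is the $C^k$-norm downstairs on ${\cal F}$ with respect to the fixed Riemannian metric, and the point $(\Phi^R v_i, F_i)$ wanders around the compact manifold as $R$ grows; one resolves this by noting that $\tilde\lambda$ is $G$-invariant and the metric on ${\cal F}$ is the push-forward of a $G$-invariant metric on $\tilde{\cal F}$, so the $C^k$-norm on ${\cal F}$ of the push-forward of $f_{\tilde u}$ depends only on the $C^k$-norm of $f_{\tilde z}$ and the covering degree, not on the location of the support — exactly the kind of uniform-geometry argument that the compactness of $M$ guarantees. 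Once this is in hand the rest is bookkeeping.
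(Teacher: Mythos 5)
Your proposal is correct and takes essentially the same route as the paper: the paper also applies Theorem \ref{mixing} to the fixed family of bump functions $f_u$ to obtain the one-dimensional estimate $\int f_a({\cal A}\Psi^R v)f_b(v)\,d\lambda(v)\geq 1-e^{-\kappa_0 R}/\kappa_0$ uniformly in $a,b$, and then factors the product over the three coordinates by Fubini. The one place you go beyond the paper's terse phrasing (``the functions $f_z$ are fixed'') is in spelling out why the $C^k$-norms of the $f_u$ are uniform in $u$ and $R$ via $G$-equivariance of the construction and compactness of $M$; this is exactly the point the paper is implicitly relying on, so your elaboration is accurate rather than a genuinely different argument. (Minor quibble: ${\cal A}$ does not commute with $\Psi^t$ even up to reparametrization — it conjugates $\Psi^t$ to $\Psi^{-t}$ — but you never actually use that claim, only that ${\cal A}$ preserves $\lambda$, which suffices.)
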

\begin{proof} Since the frame flow on ${\cal F}$ is exponentially 
mixing with respect to the Lebesgue measure
(this is equivalent to exponential decay of matrix coefficients,
see \cite{BM00})
and the functions
$f_z$ are fixed, there is a number $\kappa_0>0$ 
such that for all frames $y,z\in {\cal F}$ and all 
$R\geq 0$ we have 
\[\int f_y({\cal A}\Psi^R v)f_z(v)d\lambda(v)\geq 1-e^{-\kappa_0R}/\kappa_0.\]

Taking a triple of frames and multiplying the result shows
the lemma.
\end{proof}

\begin{remark}
Lemma \ref{mixing2} is the only part of the 
argument which uses
controlled decay of correlation for the frame flow on 
${\cal F}$. In fact, it is immediate from our discussion
that polynomial mixing with exponent at least two is sufficient
for the proof of the main theorem from the introduction.
\end{remark}

%Let $\Lambda$ be the bundle over the union of all 
%oriented closed geodesics
%in $M$ 
%whose fibre at a point $x\in \gamma$ consists of all 
%unit vectors in $T_xM$ which are orthogonal to 
%the $\mathbb{K}$-span of
%$\gamma^\prime$. Thus the 
%fibre over $x$ is a sphere in the tangent bundle of 
%$M$ over $x$ of dimension $\ell=n-2,\ell=n-3,\ell=n-5$, respectively.

A tripod $(v_1,v_2,v_3)$ is determined by the unit tangent vector
$v_1$ and 
the oriented normal of $v_1$ in the oriented real plane 
defined by the tripod. Thus there is a natural bundle isomorphism from 
the bundle of framed tripods onto 
the bundle ${\cal F}$.
The symmetry of order three which cyclically
permutes the vectors in the 
tripod induces a symmetry of order three in the bundle ${\cal F}$
which preserves the Lebesgue measure.

Let $x=((v_1,v_2,v_3),F),
y=((w_1,w_2,w_3),E)$ be any two framed tripods.
These framed tripods define two functions
$b_{x,R}$ and $b_{y,R}$ on ${\cal F}^3$.
With the notations from Proposition \ref{incompressible},
let $R>R_4$ be sufficiently large that
$e^{-\kappa R}/\kappa <R^{-b}$.
Let $\mu$ be the measure on ${\cal F\cal T}^2\times {\cal F}^3$
defined by
\begin{align} 
d\mu(x,y,F_1,F_2,F_3)& \notag\\ 
= b_{x,R}(F_1,F_2,F_3)& 
b_{y,R}({\cal A}\Psi^{2R}F_3,{\cal A}\Psi^{2R}F_2,{\cal A}\Psi^{2R}F_1)
d\lambda(x)d\lambda(y)d\lambda^3(F_1,F_2,F_3).\notag\end{align}
By Lemma \ref{mixing2} and Fubini's theorem, 
the total volume of $\mu$ is contained in the 
interval $[1-e^{-\kappa R}/\kappa,1]$.
Moreover, $\mu$ is invariant under the natural action of the cyclic
group $\Lambda$ of order three which acts as a group of rotations
on the well connected tripods and as a cyclic group of permutations
on the frames. By the above discussion, 
every point $z\in {\rm supp}(\mu)$ determines a 
geometric skew-pants $P(z)$. 
Forgetting the geometric structure of $P(z)$ determines 
a natural map
\[\hat P:{\rm supp}(\mu)\to {\cal P}(R,\delta).\]

Let 
\[{\cal S}\to T^1M\] 
be the bundle over $T^1M$ whose
fibre at a point $v\in T^1M$ equals the unit sphere in 
$v_{\mathbb{K}}^\perp$, i.e. the 
sphere of all vectors
$w\in T^1M$ which are orthogonal to the $\mathbb{K}$-line
spanned by $v$. 
%Let 
%$\hat{\cal S}={\cal S}^6/\Sigma$ be the set of all unordered 
%six-tuples of points in ${\cal S}$. 
We construct a 
push-forward of the measure
$\mu$ to ${\cal S}$ as follows. 

Let $z\in {\rm supp}(\mu)$. 
Then $z=(x,y,F_1,F_2,F_3)$ where 
$x,y\in {\cal F\cal T}$ and where $F_i\in {\cal F}$.
The two framed tripods $x,y$ define two 
oriented ideal triangles contained in an immersed totally geodesic
hyperbolic plane in $M$. Let 
$T_x,T_y$ be the center triangles of 
these oriented ideal triangles. The side length of 
$T_x,T_y$ is $2r$. The vertices of $T_x,T_y$ 
depend smoothly on $x,y$, and the orientation
of $T_x,T_y$ 
determines a cyclic order of the vertices of $T_x,T_y$.

The order of the components of 
the point $z$ determines an order of the boundary components
of the skew-pants $\hat P(z)$. More precisely, the
tripods $x,y$ and the first two frames 
$F_1,F_2$ in the triple of frames from $z$ 
determine two geodesic arcs which connect
the two footpoints of the tripods, and 
the concatenation of these arcs is freely homotopic 
to a boundary geodesic
$\alpha$ of the 
skew-pants defined by $z$. 
As $\alpha$ is a boundary geodesic of the 
oriented pair of pants $\hat P(z)$, it is oriented.

Let $(u_x^1,u_x^2,u_x^3)$ be the ordered triple of vertices of 
the triangle $T_x$, and let $(u_y^1,u_y^2,u_y^3)$ be the ordered
triple of vertices of the triangle $T_y$.
The order of these vertices is chosen in such a way that
they define the orientation of $T_x,T_y$ and that moreover 
the oriented geodesic arc connecting $u_x^1$ to $u_x^2$
(or connecting $u_y^1$ to $u_y^2$)
crosses through the first connecting 
arc for the tripods 
in the triple (up to a homotopy which moves points at
most a distance $\kappa_4e^{-R}$ where $\kappa_4>0$ is as
in Section \ref{constructing}). Thus the geodesic
segment in the homotopy class determined by $\hat P(z)$
which connects $u_x^2$ to $u_y^1$ is contained in the
$\kappa_4e^{-R}$-neighborhood of the boundary geodesic $\alpha$
of $\hat P(z)$.

The points $u_x^1,u_y^2$ depend smoothly on $z$ and hence
the same holds true for 
the geodesic segment $\beta(z)$ 
which connects $u_x^1$ to $u_y^2$ and which is contained 
in the homotopy class 
determined by the skew-pants. The geodesic segment $\beta(z)$ 
is contained in the $\kappa_4e^{-R}$-neighborhood of a 
boundary geodesic of $\hat P(z)$ distinct from $\alpha$.

There is a unique geodesic arc $\eta$ in $M$ 
which connects the closed geodesic 
$\alpha$ and the geodesic arc 
$\beta$ and which is the shortest arc with this
property in the homotopy class relative to 
$\alpha,\beta$ determined by the skew-pants $\hat P(z)$.
The initial velocity $\eta^\prime$ of $\eta$ 
is a unit tangent vector with foot-point on $\alpha$ which 
is orthogonal to $\alpha^\prime$.
The angle between $\eta^\prime$ and a direction 
which is orthogonal to $\alpha^\prime\otimes \mathbb{K}$ 
is exponentially small in $R$. 

Write 
\[{\cal S}_\alpha={\cal S}\vert \alpha^\prime\]
and define
\[{\cal O}(z)\in {\cal S}_\alpha\]
to be the projection of $\eta^\prime$ into ${\cal S}_\alpha$.
Recall that this makes sense since $\alpha$ is oriented.
In particular, the footpoint of ${\cal O}(z)$ 
is the endpoint of the geodesic arc $\eta$ on $\alpha$.
This construction defines a map ${\cal O}:{\rm supp}(\mu)\to 
{\cal S}$ whose image is contained in the union of the
restriction of ${\cal S}$ to finitely many closed geodesics in $M$.
Let ${\cal S}_\mu=\cup_\alpha{\cal S}_\alpha$ 
be the union of  
these finitely many 
sphere bundles containing ${\cal O}({\rm supp}(\mu))$.

View ${\cal S}_\mu$ as 
a smooth (disconnected) manifold.
The restriction of the map ${\cal O}$ to the 
interior of ${\rm supp}(\mu)$ (which is a disconnected
smooth manifold as well) is smooth, moreover it is easily seen
to be open (as a map into ${\cal S}_\mu$). 
As a consequence, the 
restriction of the push-forward
${\cal O}_*(\mu)$ of $\mu$ to a component ${\cal S}_\alpha$
of ${\cal S}_\mu$ 
(where as before,
$\alpha$ is a closed geodesic in $M$) is contained
in the Lebesgue measure class. 
 
For each point 
$z=(x,y,F_1,F_2,F_3)\in {\rm supp}(\mu)$, the point 
${\cal O}(z)$ is uniquely determined by $x,y,F_1,F_2$.
Namely, the geodesics $\alpha$ and $\beta$ 
used for the construction of ${\cal O}$ only depend on these
data.

The involution $\iota$ of ${\cal F\cal T}^2$ 
which exchanges the tripods $x$ and $y$
and reverses the orders of the vectors in the tripods
(hence reversing
the orientation of $T_x,T_y$) preserves the Lebesgue measure.
Since the frame flow $\Psi^t$ preseves the Lebesgue measure,
it follows from the choice of the
functions $b_{x,R}$ and the definition of the 
measure $\mu$ that 
the involution on ${\cal F\cal T}^2\times {\cal F}^3$ 
which maps a point
$(x,y,F_1,F_2,F_3)$ to 
$(\iota(x,y),{\cal A}\Psi^R(F_3),{\cal A}\Psi^R(F_2),
{\cal A}\Psi^r(F_3))$ 
preserves $\mu$. 
Thus for every oriented closed geodesic $\alpha$ we have
\[{\cal O}_*(\mu)({\cal S}_{\alpha^{-1}})=
{\cal O}_*(\mu)({\cal S}_\alpha).\]

For a closed geodesic $\alpha$ in the support of 
${\cal O}(\mu)$ 
let 
\[\mu_\alpha={\cal O}_*(\mu)\vert {\cal S}_\alpha/
{\cal O}_*(\mu)({\cal S}_\alpha)\] 
be the normalization of 
${\cal O}_*(\mu)$ on ${\cal S}_\alpha$. 
Our next goal is to investigate the measures $\mu_\alpha$.
To this end
define for a closed oriented  
geodesic $\alpha$ in $M$ a fibre bundle map 
\[\rho_{{\alpha}}:
{\cal S}_{{\alpha}}\to 
{\cal S}_{{\alpha^{-1}}}\]
by requiring that $\rho_\alpha$ maps a point
in a fibre of ${\cal S}_\alpha$ to its negative, viewed as a point in a
fibre of ${\cal S}_{\alpha^{-1}}$.

%In the statement of the following result, we view the action of 
%the group $SO(\ell)$ on the bundle $\Lambda$ locally.
%This means that we choose a trivialization of $\Lambda$ over a
%compact embedded subarc of a closed geodesic
%$\gamma$ via parallel transport over this arc. 
%Then the group $SO(\ell)$ acts fibre preserving 
%on this trivialized bundle. 

\begin{lemma}\label{rotation}
For 
%sufficiently large $R>0$ and  for 
%and sufficiently small
%$\epsilon >0$, for 
every oriented closed geodesic $\alpha$ in $M$,  
the measures $\mu_{\alpha^{-1}}$ and 
$(\rho_\alpha)_*\mu_\alpha$ are absolutely continuous, with
Radon Nikodym derivative in the interval
$[1-e^{-\kappa R}/\kappa,(1-e^{-\kappa R}/\kappa)^{-1}]$.
%and holonomy of size at most $\epsilon$, and for every
%element $A\in SO(\ell)$, the measures 
%$\mu_\gamma$ and $\mu_\gamma\circ A$ are $\theta$-related. 
\end{lemma}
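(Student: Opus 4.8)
The plan is to derive the estimate from the $\mu$-preserving involution constructed just above the lemma, together with the mixing bound of Lemma \ref{mixing2}; note that the defect $1-e^{-\kappa R}/\kappa$ appearing in the statement is exactly the normalisation defect coming from Lemma \ref{mixing2}. First I would reduce the comparison of $\mu_{\alpha^{-1}}$ with $(\rho_\alpha)_*\mu_\alpha$ to a comparison of two push-forwards of $\mu$. Recall the involution $\Theta$ of ${\cal F\cal T}^2\times{\cal F}^3$ sending $(x,y,F_1,F_2,F_3)$ to $(\iota(x,y),{\cal A}\Psi^R F_3,{\cal A}\Psi^R F_2,{\cal A}\Psi^R F_1)$, which preserves $\mu$. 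After composing $\Theta$, if necessary, with the rotation from the cyclic group $\Lambda$ that restores the labelling of the cuffs, one obtains a $\mu$-preserving involution $\Theta_0$ whose effect on a skew-pants is to reverse the orientation of the distinguished boundary geodesic while fixing that geodesic setwise; this is the content already used above to see that ${\cal O}_*\mu({\cal S}_\alpha)={\cal O}_*\mu({\cal S}_{\alpha^{-1}})$. Writing $A_\alpha={\cal O}^{-1}({\cal S}_\alpha)$ for an oriented closed geodesic $\alpha$, the map $\Theta_0$ then carries $A_\alpha$ onto $A_{\alpha^{-1}}$ up to a $\mu$-null set, so that
\[({\cal O}_*\mu)\vert{\cal S}_{\alpha^{-1}}={\cal O}_*(\mu\vert A_{\alpha^{-1}})=\big({\cal O}\circ\Theta_0\big)_*(\mu\vert A_\alpha),\qquad (\rho_\alpha)_*\big(({\cal O}_*\mu)\vert{\cal S}_\alpha\big)=\big(\rho_\alpha\circ{\cal O}\big)_*(\mu\vert A_\alpha).\]
Since $\mu_\alpha$ and $\mu_{\alpha^{-1}}$ are obtained by dividing $({\cal O}_*\mu)\vert{\cal S}_\alpha$ and $({\cal O}_*\mu)\vert{\cal S}_{\alpha^{-1}}$ by the \emph{same} constant ${\cal O}_*\mu({\cal S}_\alpha)={\cal O}_*\mu({\cal S}_{\alpha^{-1}})$, it suffices to compare the two push-forwards of $\mu\vert A_\alpha$ under ${\cal O}\circ\Theta_0$ and under $\rho_\alpha\circ{\cal O}$.

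To carry this out I would use that, for fixed tripods $x,y$, the measure $\mu$ is a product measure in the three frame variables: unravelling the definitions of $b_{x,R}$, $b_{y,R}$ and of $\mu$,
\[d\mu=\Big(\prod_{i=1}^{3}f_{\mathbf F^x_i}(F_i)\,f_{\mathbf F^y_{4-i}}({\cal A}\Psi^{2R}F_i)\Big)\,d\lambda(x)\,d\lambda(y)\,d\lambda^3(F_1,F_2,F_3),\]
where $\mathbf F^x_i,\mathbf F^y_i$ denote the frames over $\Phi^R v_i,\Phi^R w_i$ determined by $x,y$. By construction ${\cal O}(z)$ depends only on $x,y$ and two of the three frames of $z$, and likewise ${\cal O}(\Theta_0(z))$ depends only on $x,y$ and two of the three frames of $z$. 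The crucial geometric observation is that the recipe computing ${\cal O}(\Theta_0(z))$ singles out the same seam-endpoint and the same direction on $\alpha$ as $\rho_\alpha({\cal O}(z))$ does; the two maps merely record this datum through different frame variables of $z$. Granting this, the two push-forwards $({\cal O}\circ\Theta_0)_*(\mu\vert A_\alpha)$ and $(\rho_\alpha\circ{\cal O})_*(\mu\vert A_\alpha)$ differ only through which of the three frame variables is integrated out, and by the displayed product structure integrating out the $i$-th frame variable over fixed $x,y$ amounts to multiplying by
\[\phi_i(x,y)=\int f_{\mathbf F^x_i}(F)\,f_{\mathbf F^y_{4-i}}({\cal A}\Psi^{2R}F)\,d\lambda(F),\]
which by exponential mixing, applied exactly as in the proof of Lemma \ref{mixing2}, differs from $1$ by at most $e^{-\kappa R}/\kappa$ for a suitable $\kappa>0$, uniformly in $x,y$. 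Hence the two push-forwards are mutually absolutely continuous with Radon--Nikodym derivative in $[1-e^{-\kappa R}/\kappa,(1-e^{-\kappa R}/\kappa)^{-1}]$, and dividing by the common normalisation gives the asserted bound.

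The hard part will be the geometric observation in the middle step. Making it precise requires carefully unwinding the definitions of $\hat P$ and ${\cal O}$ --- in particular the ordering conventions for the vertices of the center triangles $T_x,T_y$ and the induced order on the cuffs of $\hat P(z)$ --- together with the definitions of the involution $\iota$ and of the $\Lambda$-rotation used to pass from $\Theta$ to $\Theta_0$, in order to verify that ${\cal O}\circ\Theta_0$ and $\rho_\alpha\circ{\cal O}$ genuinely pick out the same seam and the same direction on $\alpha$, so that the only discrepancy between the resulting measures on ${\cal S}_{\alpha^{-1}}$ is the replacement of one normalising frame-integral $\phi_i$ by another. Once this combinatorial bookkeeping is in place, only the elementary product structure of $\mu$ and the mixing estimate of Lemma \ref{mixing2} remain.
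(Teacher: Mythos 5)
Your proposal takes a genuinely different route from the paper, and the route has an unresolved gap at precisely the point you flag as the hard part.

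The paper's own proof does not go through the global involution $\Theta_0$ at all. Instead, for each $z=(x,y,F_1,F_2,F_3)$ with ${\cal O}(z)\in{\cal S}_\alpha$ it lifts the data to $\tilde M$, singles out the lift $\tilde p$ of the foot-point of ${\cal O}(z)$ on $\tilde\alpha$, and applies the \emph{geodesic reflection} $\sigma$ about $\tilde p$ to the lifted tripods and to the two lifted frames $\tilde F_1,\tilde F_2$. The reflected quadruple, projected back to $M$, is checked directly to satisfy ${\cal O}(\hat z)=\rho_\alpha({\cal O}(z))$ for any completion by a third frame $\hat F_3$; the error term $1-e^{-\kappa R}/\kappa$ then enters when one marginalises out the third frame via Fubini and Lemma \ref{mixing2}. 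The essential point is that $\sigma$ is an isometry of $\tilde M$ which fixes $\tilde p$, reverses $\tilde\alpha$, and hence tautologically implements $\rho_\alpha$; no combinatorial bookkeeping of tripod labels and frame indices is required, because the reflection is defined intrinsically in terms of the seam foot-point.

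Your substitute --- the global $\mu$-preserving involution $\Theta_0$ --- is not obviously equivalent to this pointwise-varying reflection, and the step you yourself label ``the hard part'' is exactly where the argument is incomplete. Two concrete concerns. First, after composing $\Theta$ with a $\Lambda$-rotation, the first two frame slots of $\Theta_0(z)$ are images under ${\cal A}\Psi^{2R}$ of \emph{some} pair among $F_1,F_2,F_3$, determined by the cyclic shift, and it is not clear without a careful index chase that this pair is $\{F_1,F_2\}$; if it is not, ${\cal O}(\Theta_0(z))$ genuinely depends on $F_3$, so it cannot pointwise equal $\rho_\alpha({\cal O}(z))$, which does not. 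Second, even granting that $\Theta_0$ sends the distinguished boundary $\alpha$ to $\alpha^{-1}$, a boundary geodesic carries \emph{two} seam foot-points, and the involution must be shown to preserve the correct one; reversing the orientation of the pair of pants can a priori exchange them. Your phrase ``the two maps merely record this datum through different frame variables of $z$'' glosses over this: if ${\cal O}\circ\Theta_0$ and $\rho_\alpha\circ{\cal O}$ really output the same point of ${\cal S}_{\alpha^{-1}}$ for every $z$, they are the same map and, since $\Theta_0$ preserves $\mu$ exactly, one would get $(\rho_\alpha)_*\mu_\alpha=\mu_{\alpha^{-1}}$ with Radon--Nikodym derivative identically $1$ --- a stronger statement than the lemma, and one the subsequent $\phi_i$ bookkeeping would then be superfluous for; whereas if they depend on different frame variables they are different maps, and the mechanism by which their push-forwards nevertheless agree up to $\phi_i/\phi_j$ needs to be spelled out, not asserted. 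Until the tripod/frame index chase is actually carried out and shown to produce the same seam foot-point, the argument is not complete.

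The paper's reflection argument sidesteps all of this: the reflection about $\tilde p$ is canonically attached to the vector ${\cal O}(z)$, it manifestly preserves the Lebesgue measure on tripod and frame data because it is an isometry, and it manifestly sends ${\cal O}(z)$ to its negative with the same foot-point. If you want to pursue the $\Theta_0$ route you should first establish, with the conventions of Section 6 in hand, exactly what $\Theta_0$ does to the ordered boundary geodesics of $\hat P(z)$ and to the two seam foot-points on $\alpha$, and only then attempt the measure comparison.
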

\begin{proof} By the definition of well connected
framed tripods, the following holds true.

Let $z=(x,y,F_1,F_2,F_3)\in {\rm supp}(\mu)$ 
and assume that ${\cal O}(z)\in {\cal S}_\alpha$.
The point $z$ determines a geodesic arc $\eta$ connecting
the footpoint of the tripod $x$ to the footpoint of $y$
which  is homotopic 
with fixed endpoints 
to a geodesic in the geometric
skew-pants $P(z)$ determined by $z$. 
The geodesic $\eta$ defines the good connection 
between the first two frames in the well connected tripods
$x,y$.

%
%Let $x=((v_1,v_2,v_3),F),y=((w_1,w_2,w_3),E)$ be an ordered pair
%of well connected framed tripods defining 
%a geometric skew pants $P$ which contains the 
%geodesic $\gamma$ in its oriented boundary.
%of some skew-pants $P$, and
%let $\hat Q(x,y)$ be the distinguished points on 
%$\gamma$ defined by these framed tripods.
%Let $p\in \alpha$ be the footpoint of ${\cal O}(z)$. 
Choose a lift $\tilde \alpha$ of the geodesic $\alpha$ to 
$\tilde M$.
The tripods $x,y$ admit lifts $\tilde x,\tilde y$ 
to tripods in $\tilde M$ 
in such a way that a long side of each of 
the two totally geodesic hexagons
$H_R(\tilde x),H_R(\tilde y)\subset \tilde M$ 
is contained in the $\kappa_4e^{-R}$-neighborhood of 
$\tilde \alpha$. 
We also require that the 
footpoints of the tripods 
$\tilde x,\tilde y$ are connected
by a lift $\tilde \eta$ of the geodesic arc $\eta$.
These lifts then determine lifts 
$\tilde F_1,\tilde F_2$ 
of the frames $F_1,F_2$. They also determine
a lift $\tilde p$ of the footpoint $p$ of ${\cal O}(z)$.
% of the geometric 
%skew-pants $P$ which
%lies between the projections of $x$ and $y$ 
%along the oriented geodesic $\gamma$.  
%which lies between the first and second point of the image 
%of $Q$ along the oriented geodesic $\gamma$.

Let $\sigma$ be the geodesic reflection about $\tilde p$.
Then $d\sigma(\tilde x),d\sigma(\tilde y)$ is a pair of tripods in 
$\tilde M$, and $d\sigma(\tilde F_1),d\sigma(\tilde F_2)$ are frames.
The projection $\eta(x,y,F_1,F_2)$ to $M$ of the quadruple
$(d\sigma(\tilde x),d\sigma(\tilde y),d\sigma(\tilde F_1),
d\sigma(\tilde F_2))$ determines the point 
$\rho_\alpha({\cal O}(z))$ on ${\cal S}_{\alpha^{-1}}$.
As a consequence, for every choice of a frame $\hat F_3$ so that
$\hat z=(\eta(x,y,F_1,F_2),\hat F_3)\in {\rm supp}(\mu)$ we have
${\cal O}(\hat z)=\rho_\alpha({\cal O}(z))$.

%The reflected tripods can be joined with good connections  
%at the two open ends, i.e. the two directions which are not 
%used to determine the geodesic $\gamma$.
As the reflection $\sigma$ is an isometry and hence
it acts as a bundle automorphism on the bundle of 
framed tripods over $\tilde M$ 
and on the bundle of frames in $T\tilde M$ preserving the 
Lebesgue measure,
Fubini's theorem and Lemma \ref{mixing2} implies that  
the map $\rho_\alpha$ is absolutely continuous with respect to 
the measure $\mu_\alpha$ and
the measure $\mu_{\alpha^{-1}}$, 
with Radon Nikodym derivative contained in the interval
$[1-e^{-\kappa R}/\kappa,(1-e^{-\kappa R}/\kappa)^{-1}]$.
This shows the lemma.
\end{proof}

Let again $\alpha$ be a closed geodesic in $M$ and for $t\geq 0$ 
let $B_\alpha^t:{\cal S}_{{\alpha}}\to {\cal S}_{{\alpha}}$ 
be the
map induced by parallel transport of distance $t$.
The map $B_\alpha^t$ in turn 
is the projection of a map $B_{\tilde\alpha}^t$
which is defined as follows. Let $\tilde \alpha$ be a lift
of $\alpha$ to $\tilde M$, and let $B_{\tilde \alpha}^t$ be 
parallel transport of distance $t$ along $\tilde \alpha$.
Then $B_{\tilde \alpha}^t$ is the restriction
of a bundle automorphism of $T\tilde M$ defined by an isometry
of $\tilde M$ which preserves $\tilde \alpha$ and acts on
$\tilde \alpha$ as a translation. 
As in Lemma \ref{rotation}, we use this fact to conclude 
 
\begin{lemma}\label{rotation3}
The measures
$\mu_\alpha$ and $(B_\alpha^t)_*\mu_\alpha$ are absolutely continuous, with
Radon Nikodym derivative contained in the interval
\[[1-e^{-\kappa R}/\kappa,(1-e^{-\kappa R}/\kappa)^{-1}].\]
\end{lemma}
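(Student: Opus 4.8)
The plan is to reproduce the proof of Lemma~\ref{rotation} almost verbatim, replacing the geodesic reflection $\sigma$ used there by the transvection along $\tilde\alpha$ that translates by distance $t$. First I would recall, as observed just before the statement, that for $z=(x,y,F_1,F_2,F_3)\in{\rm supp}(\mu)$ with ${\cal O}(z)\in{\cal S}_\alpha$ the point ${\cal O}(z)$ depends only on the four-tuple $(x,y,F_1,F_2)$: it is the projection into ${\cal S}_\alpha$ of the initial velocity of the shortest geodesic arc $\eta$ joining $\alpha$ to the segment $\beta=\beta(z)$ in the homotopy class prescribed by $\hat P(z)$, and $\alpha,\beta,\eta$ are all determined by $x,y,F_1,F_2$. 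Then, fixing $t$, I would choose lifts $\tilde\alpha,\tilde x,\tilde y,\tilde\eta,\tilde F_1,\tilde F_2,\tilde p$ of $\alpha$, of the two tripods, of $\eta$, of the two frames and of the footpoint of ${\cal O}(z)$ exactly as in the proof of Lemma~\ref{rotation}, so that a long side of each of the totally geodesic hexagons $H_R(\tilde x),H_R(\tilde y)\subset\tilde M$ lies in the $\kappa_4 e^{-R}$-neighborhood of $\tilde\alpha$ and the footpoints of $\tilde x,\tilde y$ are joined by $\tilde\eta$.

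Let $\phi_t$ denote the transvection of $\tilde M$ along $\tilde\alpha$ translating $\tilde\alpha$ by $t$. By the remark preceding the lemma this is an isometry of $\tilde M$ preserving $\tilde\alpha$ whose differential along $\tilde\alpha$ is the parallel transport $B^t_{\tilde\alpha}$. The quadruple $(d\phi_t\tilde x,d\phi_t\tilde y,d\phi_t\tilde F_1,d\phi_t\tilde F_2)$ is again a configuration of framed tripods and frames, and a long side of each of its hexagons lies in the $\kappa_4 e^{-R}$-neighborhood of $\phi_t(\tilde\alpha)=\tilde\alpha$; it projects to a quadruple over $M$. Since $\phi_t$ is an isometry it carries seams to seams, center triangles to center triangles and the arc $\tilde\eta$ to the shortest connecting arc of the new configuration, while acting on normal directions along $\tilde\alpha$ (in particular on the $\mathbb{K}$-orthogonal complement of $\alpha^\prime$) by parallel transport. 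Hence the ${\cal O}$-value of the projected quadruple equals $B^t_\alpha({\cal O}(z))$, and for every frame $\hat F_3$ for which the resulting five-tuple lies in ${\rm supp}(\mu)$ its ${\cal O}$-value is $B^t_\alpha({\cal O}(z))$.

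Finally, since $\phi_t\in G$ it acts as a measure-preserving bundle automorphism of the bundle of framed tripods over $\tilde M$ and of the bundle of $\mathbb{K}$-frames in $T\tilde M$, each carrying its $G$-invariant Lebesgue measure; moreover the density defining $\mu$ factors over the three boundary labels, so that integrating out the third frame $F_3$ by Fubini's theorem contributes a correlation integral which, by the mixing estimate established in the proof of Lemma~\ref{mixing2}, lies in $[1-e^{-\kappa R}/\kappa,(1-e^{-\kappa R}/\kappa)^{-1}]$. Combined with the $\phi_t$-invariance of the lifted measure this yields, exactly as in the last paragraph of the proof of Lemma~\ref{rotation}, that $(B^t_\alpha)_*\mu_\alpha$ and $\mu_\alpha$ are mutually absolutely continuous with Radon--Nikodym derivative in the stated interval. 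The point requiring care is the second step: verifying that the transvection $\phi_t$ genuinely realizes the parallel transport $B^t_\alpha$ on ${\cal S}_\alpha$ and is compatible with every ingredient of the definition of ${\cal O}$ --- the choice of center triangles, the splitting of $T\tilde M$ along $\alpha$ into $\alpha^\prime\otimes\mathbb{K}$ and its complement, and the homotopy class fixing $\beta$ and $\eta$. Once this is in place the measure-theoretic bookkeeping is routine, being identical with that of Lemma~\ref{rotation}.
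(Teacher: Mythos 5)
Your proposal is correct and coincides with the paper's intended argument: the paper proves Lemma~\ref{rotation3} precisely by observing (in the paragraph immediately preceding the statement) that $B^t_{\tilde\alpha}$ is the restriction of an isometry of $\tilde M$ preserving $\tilde\alpha$ as a translation, and then invoking the same Fubini-plus-mixing bookkeeping as in the proof of Lemma~\ref{rotation}. Your verification that the transvection is compatible with the choice of lifts, seams, center triangles, and the homotopy class determining $\beta$ and $\eta$ is exactly the routine detail the paper leaves implicit with the phrase ``As in Lemma~\ref{rotation}, we use this fact to conclude.''
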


Recall that for a closed geodesic $\alpha$ in $M$
the monodromy of $\alpha$ is defined.
This monodromy is 
an isometry contained in the intropy group of
the tangent of $\alpha$ and hence it is an element  
$A\in SO(n-1)$ 
(or $A\in SU(n-1),A\in Sp(n-1),A\in {\rm Spin}(7)$).
For a given point $p\in \alpha$, it has 
a natural representative as an isometry 
of the $\mathbb{K}$-orthogonal complement of 
$\alpha^\prime$ in $T_pM$. 

The following observation is completely analogous to 
Lemma \ref{rotation} and Lemma \ref{rotation3}.
For its formulation, note that an isometry 
of the $\mathbb{K}$-orthogonal complement of 
$\alpha^\prime$ in $T_pM$ which commutes
with the monodromy of $\alpha$ determines
a bundle automorphism of ${\cal S}_\alpha$ commuting
with parallel transport. 

\begin{lemma}\label{rotation4}
Let $U\in SO(n-1)$ (or $U\in SU(n-1),
U\in Sp(n-1),U\in {\rm Spin}(7)$)
be an isometry of the $\mathbb{K}$-orthogonal
complement of $\alpha^\prime$ in $T_pM$ which 
commutes with the monodromy of $\alpha$. Then the 
measures $\mu_\alpha$ and $\mu_\alpha\circ U$ are absolutely continuous,
with Radon Nikodym derivative contained in the interval
$[1-e^{-\kappa R}/\kappa,(1-e^{-\kappa R}/\kappa)^{-1}]$.  
\end{lemma}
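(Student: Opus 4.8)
The plan is to run the proofs of Lemma \ref{rotation} and Lemma \ref{rotation3} essentially verbatim; the only new ingredient is to realize the bundle automorphism of ${\cal S}_\alpha$ determined by $U$ as the action of an isometry of $\tilde M$. Fix a lift $\tilde\alpha$ of $\alpha$ to $\tilde M$ and let $g\in\Gamma$ be the hyperbolic element whose axis is $\tilde\alpha$. First I would produce an isometry $\phi\in G$ which fixes $\tilde\alpha$ pointwise and whose differential, in the trivialization of the normal bundle of $\tilde\alpha$ by parallel transport, acts on the $\mathbb{K}$-orthogonal complement of $\tilde\alpha^\prime$ as the given map $U$. This uses the description of the isotropy representations in Section \ref{cannon}: the subgroup of $G$ fixing $\tilde\alpha$ pointwise equals the stabilizer in $G$ of a unit vector tangent to $\tilde\alpha$, and this stabilizer acts on the $\mathbb{K}$-orthogonal complement of that vector through the full structure group $SO(n-1)$, $SU(n-1)$, $Sp(n-1)$ or ${\rm Spin}(7)$ of ${\cal F}$; pick $\phi$ in the preimage of $U$. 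Since $\phi$ fixes $\tilde\alpha$ pointwise it commutes with parallel transport along $\tilde\alpha$, so its differential is constant along $\tilde\alpha$ and equal to $U$ there; and because $U$ commutes with the monodromy of $\alpha$, which is precisely the rotational part of $g$ acting on the $\mathbb{K}$-orthogonal complement of $\tilde\alpha^\prime$, the isometry $\phi$ commutes with $g$. Hence $\phi$ descends to an automorphism $\hat U$ of the quotient ${\cal S}_\alpha$ of ${\cal S}_{\tilde\alpha}$ by $\langle g\rangle$, and $\hat U$ is exactly the bundle automorphism determined by $U$ as described before the statement.

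With $\phi$ fixed, I would repeat the bookkeeping of Lemma \ref{rotation}. Given $z=(x,y,F_1,F_2,F_3)\in{\rm supp}(\mu)$ with ${\cal O}(z)\in{\cal S}_\alpha$, lift the tripods and frames to $\tilde M$ so that a long side of each of the hexagons $H_R(\tilde x)$ and $H_R(\tilde y)$ lies in the $\kappa_4 e^{-R}$-neighborhood of $\tilde\alpha$, apply $d\phi$, and project back to $M$. Since $\phi$ commutes with $g$ this is a well-defined point of ${\rm supp}(\mu)$; it determines a skew-pants with the same oriented boundary geodesic $\alpha$, and the unit vector $\eta^\prime$ whose projection defines ${\cal O}$ is rotated by $U$, so that ${\cal O}$ intertwines this operation with $\hat U$. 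Because $\phi$ is an isometry, the weight functions $f_u$ were defined $G$-equivariantly, and $\phi$ commutes with the geodesic and frame flows and with the involution ${\cal A}$, the $G$-invariance of the Lebesgue measure $\tilde\lambda$ shows that this operation transforms $\mu$ into itself up to an error controlled by the mixing estimate of Lemma \ref{mixing2}, exactly as in Lemma \ref{rotation}; this error produces the interval $[1-e^{-\kappa R}/\kappa,(1-e^{-\kappa R}/\kappa)^{-1}]$. Integrating out the free frame $F_3$ with Fubini's theorem and normalizing then gives that $\mu_\alpha$ and $\mu_\alpha\circ U=\hat U_*\mu_\alpha$ are mutually absolutely continuous with Radon Nikodym derivative in this interval.

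The step I expect to be the main obstacle is the construction of $\phi$: one must verify, case by case from the isotropy data of Section \ref{cannon}, that the pointwise stabilizer of a geodesic realizes every element of the structure group on the $\mathbb{K}$-orthogonal complement, and --- this is the delicate point for $Sp(n,1)$ and $F_4^{-20}$, where the stabilizer can act nontrivially on the $\mathbb{K}$-line through $\tilde\alpha^\prime$ as well --- that the hypothesis that $U$ commutes with the monodromy really forces $\phi$ and $g$ to commute as isometries of $\tilde M$, which is what makes the conjugated construction descend to $M$. Granting this, the remainder is a transcription of the proofs of Lemma \ref{rotation} and Lemma \ref{rotation3}.
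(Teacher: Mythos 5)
Your proposal is correct and is exactly the argument the paper intends by declaring the lemma ``completely analogous'' to Lemmas \ref{rotation} and \ref{rotation3}: realize the bundle automorphism of ${\cal S}_\alpha$ by an isometry of $\tilde M$ fixing $\tilde\alpha$ pointwise, descend using commutativity with $g$, and invoke $G$-invariance of $\tilde\lambda$ together with Lemma \ref{mixing2} as before. The obstacle you flag at the end is resolvable just as you suspect: choose $\phi$ inside the $Sp(n-1)$ (resp.\ ${\rm Spin}(7)$) factor of the pointwise stabilizer of $\tilde\alpha$, so that $d\phi$ is the identity on the $\mathbb{K}$-line $\mathbb{K}\tilde\alpha'$ and equals $U$ on $(\tilde\alpha')^\perp_{\mathbb{K}}$; since both $d\phi$ and the differential of the rotational part $m$ of $g$ preserve the decomposition $\mathbb{R}\tilde\alpha'\oplus(\mathbb{K}\tilde\alpha'\ominus\mathbb{R}\tilde\alpha')\oplus(\tilde\alpha')^\perp_{\mathbb{K}}$, they commute summand by summand (trivially on $\mathbb{K}\tilde\alpha'$ where $d\phi={\rm Id}$, by hypothesis on the $\mathbb{K}$-orthogonal complement), hence $\phi$ commutes with $m$, and as translation along $\tilde\alpha$ commutes with the entire pointwise stabilizer, $\phi$ commutes with $g$.
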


The bundle ${\cal S}_{{\alpha}}$ is a standard
sphere bundle over the circle. There is a natural 
Riemannian metric for this 
bundle which restricts to the round metric on each fibre. The 
length of the base equals the length 
$\ell(\alpha)$ of $\alpha$.
Let $d$ be the distance function on 
${\cal S}_{{\alpha}}$ induced by
this metric. The maps $\rho_\alpha:{\cal S}_{{\alpha}}\to
{\cal S}_{{\alpha^{-1}}}$ and $B_\alpha^t$ are isometries for these
metrics. Write $B_\alpha=B_{\alpha}^1$.

\begin{proposition}\label{matching}
If $G\not=SO(2m,1)$ for some $m\geq 1$ then 
there is a number $\theta>0$ not depending on $\alpha$, and 
there is a homeomorphism 
$\psi_\alpha:{\cal S}_{{\alpha}}\to 
{\cal S}_{{\alpha}}$ with 
\[(\rho_\alpha\circ B_\alpha\circ\psi_\alpha)_*\mu_\alpha=\mu_{\alpha^{-1}}\]
and $d(x,\psi_\alpha(x))\leq \theta e^{-\kappa R}$ 
for all $x\in {\cal S}_\alpha$.
\end{proposition}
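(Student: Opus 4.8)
The plan is to turn Proposition \ref{matching} into a measure transport problem on the single bundle ${\cal S}_\alpha$ and to solve it by a perturbation of the identity; the delicate point will be to keep the displacement of that perturbation of order $e^{-\kappa R}$, which is exactly where the exclusion of $SO(2m,1)$ enters.

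First I would reformulate the desired identity. Since $\rho_\alpha$ and $B_\alpha$ are homeomorphisms, $(\rho_\alpha\circ B_\alpha\circ\psi_\alpha)_*\mu_\alpha=\mu_{\alpha^{-1}}$ holds if and only if $(\psi_\alpha)_*\mu_\alpha=\mu_2$, where $\mu_2:=(\rho_\alpha\circ B_\alpha)^{-1}_*\mu_{\alpha^{-1}}$. Lemma \ref{rotation3} shows that $(B_\alpha)_*\mu_\alpha$ is absolutely continuous with respect to $\mu_\alpha$ with Radon--Nikodym derivative in $[1-e^{-\kappa R}/\kappa,(1-e^{-\kappa R}/\kappa)^{-1}]$, and Lemma \ref{rotation} shows the analogous statement for $(\rho_\alpha)_*\mu_\alpha$ and $\mu_{\alpha^{-1}}$. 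Composing the two, and using that $\rho_\alpha$ and $B_\alpha$ are isometries of the bundle metrics so that pushing a density forward along them does not distort it, one obtains that $\mu_\alpha$ and $\mu_2$ are mutually absolutely continuous with Radon--Nikodym derivative in $[1-c\,e^{-\kappa R},\,1+c\,e^{-\kappa R}]$ for a universal $c>0$; both are probability measures, so their total masses agree automatically. I would also record that $\mu_\alpha$, and hence $\mu_2$, has a continuous density with respect to the Lebesgue measure of ${\cal S}_\alpha$: this follows because $\mu$ is a smooth measure on the interior of its support, ${\cal O}$ restricted there is smooth and open, and the functions $f_z$ entering the definition of $\mu$ are nonnegative with mass one; using Lemma \ref{mixing2} one checks that for $R$ large this density is everywhere positive.

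The heart of the matter is then a quantitative transport statement: given two probability measures on ${\cal S}_\alpha$ with continuous positive densities whose ratio lies in $[1-\varepsilon,1+\varepsilon]$, there is a homeomorphism $\psi$ of ${\cal S}_\alpha$ with $\psi_*\mu_\alpha=\mu_2$ and $d(x,\psi(x))\le C\varepsilon$ for all $x$ (with $\varepsilon=c\,e^{-\kappa R}$); once $\varepsilon$ is smaller than the injectivity radius of ${\cal S}_\alpha$ such a $\psi$ is automatically isotopic to the identity. I would prove this by a Moser-type argument adapted to the structure of ${\cal S}_\alpha$ as a sphere bundle over the circle of length $\ell(\alpha)$: match the push-forwards to the base circle by a circle homeomorphism within $C\varepsilon$ of the identity, and then, after this correction, match the conditional measures on each fibre sphere by homeomorphisms moving points at most $C\varepsilon$ and depending continuously on the base point. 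The hypothesis $G\neq SO(2m,1)$ is used in the fibrewise step: through $\rho_\alpha$ the deformation carrying $\mu_\alpha$ into $\mu_2$ secretly incorporates the fibrewise antipodal reflection of ${\cal S}_\alpha$, whereas $\mu_{\alpha^{-1}}$ is tied to $\mu_\alpha$ by the tripod--exchanging involution $\iota$ without any such reflection, so one must be able to connect this antipodal reflection to the identity through maps that very nearly preserve $\mu_\alpha$. The centralizer property recalled at the beginning of Section \ref{theglueing} supplies exactly this: the monodromy $A$ of $\alpha$ has $-{\rm Id}$ in the identity component of its centralizer in $K_0$, hence there is a path $U_s$, $s\in[0,1]$, in that centralizer with $U_0={\rm Id}$ and $U_1=-{\rm Id}$; each $U_s$ induces a bundle automorphism of ${\cal S}_\alpha$ commuting with parallel transport, so Lemma \ref{rotation4} bounds the distortion of $\mu_\alpha$ along the whole family by the same constant $1+e^{-\kappa R}/\kappa$. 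This allows the transport homeomorphism to be chosen in the component of the identity, with the displacement bound $\theta e^{-\kappa R}$.

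The main obstacle is precisely this last step. For $G=SO(2m,1)$ the fibre of ${\cal S}_\alpha$ is the even-dimensional sphere $S^{2m-2}$, the monodromy may fix a single real line, and $-{\rm Id}$ is not in the identity component of its centralizer; then the antipodal reflection carried by $\rho_\alpha$ cannot be absorbed into a perturbation close to the identity, so although a measure-preserving homeomorphism matching $\mu_\alpha$ with $\mu_{\alpha^{-1}}$ through $\rho_\alpha\circ B_\alpha$ still exists, it is forced away from the identity and the glueing argument behind Proposition \ref{incompressible} breaks down; this is the difficulty discussed in Section \ref{even}. All the remaining ingredients --- the closeness of $\mu_\alpha$ and $\mu_2$, the continuity and positivity of the densities, and the Moser-type transport --- are routine once the reduction above is in place.
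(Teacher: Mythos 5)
Your reduction of the statement to $(\psi_\alpha)_*\mu_\alpha=\mu_2$ with $\mu_2=(\rho_\alpha\circ B_\alpha)^{-1}_*\mu_{\alpha^{-1}}$, and your observation that Lemmas \ref{rotation} and \ref{rotation3} force the Radon--Nikodym derivative $d\mu_2/d\mu_\alpha$ to lie in $[1-ce^{-\kappa R},1+ce^{-\kappa R}]$, agree with the paper's setup. The gap is in the step you call the ``heart of the matter''. You assert as a general fact that two probability measures on ${\cal S}_\alpha$ with continuous positive densities whose \emph{ratio} is in $[1-\varepsilon,1+\varepsilon]$ can be matched by a homeomorphism displacing points at most $C\varepsilon$. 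This is false without an additional uniform lower bound on the densities themselves. A Moser-type construction produces a vector field $X_t$ with $\iota_{X_t}\nu_t=-\eta$ where $d\eta=(g-f)\omega$; the pointwise bound on $X_t$ is $|\eta|/\rho_t$, and even though $\|\eta\|_{C^0}$ is of order $\varepsilon$, at a point where the density $\rho_t$ is small the displacement blows up. Concretely, if $f$ is very small across an annular region and slightly too much mass sits on one side, a ratio bound alone does not prevent the transport from pushing points across the whole low-density region, a distance that can be much larger than $\varepsilon$. Since the desired constant $\theta$ must be independent of $\alpha$, one needs the fibre density of $\mu_\alpha$ to be controlled uniformly in $\alpha$, and nothing in your Lemmas \ref{rotation}--\ref{rotation3} reduction supplies that.

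This is exactly where the paper uses Lemma \ref{rotation4} and the centralizer structure, and where the $G\neq SO(2m,1)$ hypothesis actually enters --- but not in the way you describe. The paper first reduces to matching fibre conditional measures (the base step is the same as yours), and then argues: on each orbit $C(A)(v)$ of the identity component of the centralizer, Lemma \ref{rotation4} forces the fibre density of $\mu_\alpha$ (and of $\mu_2$) to be within a factor $1\pm e^{-\kappa R}/\kappa$ of a \emph{constant}; this is what makes Moser's estimate yield displacement of order $e^{-\kappa R}$ on that orbit. To run this orbit-by-orbit one must balance the $\mu_\alpha$-mass and $\mu_2$-mass of each orbit sub-bundle ${\cal C}=\cup_t\Vert_{\alpha[0,t]}C(A)(v)$. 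The point of $-\mathrm{Id}\in C(A)$ (equivalently, $m-1$ odd, equivalently, $G\neq SO(2m,1)$) is that each orbit $C(A)(v)$ is antipodally invariant, hence ${\cal C}$ is invariant under $\rho_\alpha$ and under parallel transport, and consequently the two total integrals over ${\cal C}$ coincide. Your alternative explanation --- that one can join the antipodal reflection to the identity through a path $U_s\subset C(A)$ and therefore ``the transport homeomorphism can be chosen in the component of the identity with displacement $\theta e^{-\kappa R}$'' --- does not produce a small displacement: the intermediate maps $U_s$ move points a distance of order one, not $e^{-\kappa R}$, and being in the identity component of the homeomorphism group does not bound displacement. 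So the proposal correctly identifies the ingredients (Lemma \ref{rotation4}, the centralizer containing $-\mathrm{Id}$, a Moser argument) but assembles them around a transport claim that is not true as stated, and the replacement justification offered does not close the gap.
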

\begin{proof} We observed before that the 
measures $\mu_\alpha$ are contained in the Lebesgue measure class.
Denote by $\omega$ the standard volume
form on the smooth oriented manifold 
${\cal S}_\alpha$; then 
we may assume that 
\[(\rho_\alpha\circ B_\alpha)^{-1}_*(\mu_{\alpha^{-1}})=g\omega,
\, \mu_\alpha= f\omega\] 
for continuous positive functions $f,g$ with
\[\int fd\omega=\int gd\omega=1.\]

Our goal is to show that there is a homeomorphism
$\psi_\alpha$ of ${\cal S}_\alpha$ which satisfies
$d(x,\psi_\alpha(x))\leq \theta e^{-\kappa R}$ for some 
$\theta >0$ and such that 
$\psi_\alpha^*(g\omega)=f\omega$.

Write $q=1-e^{-\kappa R}/\kappa$. 
By Lemma \ref{rotation3}, the function $f$ is invariant
under parallel transport up to a multiplicative 
factor of at most $q^{-1}$.
This implies the following.

Choose a parametrization of $\alpha$ by arc length on
the interval $[0,\ell]$.
Let $\pi:{\cal S}_\alpha\to \alpha$ be the natural projection, 
let $\omega_s$ be the standard volume form on 
$\pi^{-1}(s)$ and let 
$f_0:[0,\ell]\to (0,\infty)$ be the function
obtained by 
\[f_0(s)=\int_{\pi^{-1}(s)}f d\omega_s.\]
Then 
\[\int_a^b f_0 dt\in [q(b-a)/\ell,q^{-1}(b-a)/\ell]\]
for all $a<b$, and $\int_0^\ell f_0dt=1$.
Therefore if we define 
$\chi(t)=\ell\int_0^tf_0ds$ then $\chi:[0,\ell]\to[0,\ell]$ is a 
homeomorphism which moves points a distance 
at most $1-q$. Moreover,
the measure $\chi_*(\ell f_0dt)$ is the standard
Lebesgue measure $dt$ on the base $[0,\ell]$.

Lift the homeomorphism $\chi$
to a homeomorphism $\Psi:{\cal S}_\alpha\to {\cal S}_\alpha$
defined by 
\[\Psi(v)=\Vert_{\alpha[\pi(v),\chi(\pi(v))]}v.\]
Then the fibres of the bundle 
$\pi:{\cal S}_\alpha\to \alpha$ have volume one
for the volume form $\Psi_*(\ell f\omega)$. 
This implies that 
via moving fibres of ${\cal S}_{{\alpha}}$ 
with parallel transport and renormalization, 
it suffices to show the lemma 
under the additional assumption that each 
of the fibre integrals of 
$f$ and $g$ equals one.

Let $m={\rm dim}(M)-{\rm rk}(\mathbb{K})$
where ${\rm rk}(\mathbb{K})$ is the rank of 
$\mathbb{K}$ as an $\mathbb{R}$-vector space.
Let $A$ be the monodromy of $\alpha$. 
Then $A$ is an element of the orthogonal group
which fixes $\gamma^\prime$. 
If $G=SO(n,1)$ then there are no further constraints, and 
we have $A\in SO(n-1)=SO(m)$.
In the case $G=SU(n,1)$ the element $A$ also fixes
the image of $\gamma^\prime$ under the complex structure
and we have $A\in SU(n-1)<SO(2n-2)=SO(m)$.
Similarly, if $G=Sp(n,1)$ then $A$ fixes the 
quaternionic line spanned by $\gamma^\prime$
although perhaps not pointwise, and 
we can view $A$ as an element in 
$SO(4n-4)SO(4)<SO(m)SO(4)$.
Finally if $G=F_4^{-20}$ then 
$A$ fixes the Cayley line spanned by $\gamma^\prime$ and we can view
$A$ as an element in $SO(8)SO(8)=SO(m)SO(8)$.

We first consider the case that the component of the
identity 
$C(A)<SO(n-1)$ (or $C(A)<SU(n-1)<S(U(n)U(1)),
C(A)<Sp(n-1),C(A)<{\rm Spin}(9)$)
of the centralizer of $A$ in $SO(n-1)$ 
(or in $SU(n-1),Sp(n-1),{\rm Spin}(9)$) acts transitively on $S^{m-1}$,
viewed as 
a fibre of the bundle ${\cal S}_\alpha$. Observe that this holds
true if $M$ is a hyperbolic 3-manifold.

In this case
Lemma \ref{rotation4} shows that there is a function $\beta$
with values in the interval $[q,q^{-1}]$
such that $f\beta$ is a positive constant function.

Let $\omega_0$ be the 
smooth normalized volume form of the 
round metric on the round sphere $S^{m-1}$. 
For a number $\rho \in (0,1/4)$ 
consider for the moment an arbitrary continuous function
$h:S^{m-1}\to [1-\rho,1+\rho]$ with the property that
$\int (h-1)d\omega_0=0$. 
The function $h-1$ is bounded in norm by $\rho$.
Let $\Delta$ be the Laplacian of the round metric on $S^{m-1}$.
Then 
there is a unique function $\phi:S^{m-1}\to \mathbb{R}$ 
such that
\[\Delta(\phi)=h-1\text{ and }\int\phi\, d\omega_0=0.\]

Let $*$ be the Hodge star operator of the round metric
on $S^{m-1}$.
Schauder theory shows that the $(m-2)$-form
$\eta=* d*(\phi \omega_0)$ is bounded in norm by a constant multiple of 
$\rho$.

Let 
\[\nu_t=(1-t)\omega_0+th\omega_0.\]
Then for each $t$ the norm of the vector field $X_t$ defined by
\[\iota_{X_t}\nu_t=-\eta\]
is bounded from above by a constant multiple of $\rho$.
Let $\Lambda$ be the time-one map of the flow of 
the time dependent vector field $X_t$.
There is a number $\theta>0$ such that
$d(x,\Lambda x)\leq \theta\rho$
for all $x\in S^{m-1}$.
On the other hand, we have $\Lambda^*(h\omega_0)=\omega_0$.

We now apply this construction to 
the restrictions of the function $f$ to the 
fibres of ${\cal S}_\alpha\to \alpha$. These restrictions  
depend continuously on the fibre. As 
all functions and forms in the above construction
depend continuously on the function $h$ with respect to the 
$C^0$-topology, the fibrewise defined homeomorphisms which 
transform the volume form $\omega_s$ on the fibre
$\pi^{-1}(s)$  
to the volume form $f\omega_s$
determine a fibre preserving homeomorphism 
$\Lambda_f:{\cal S}_\alpha\to {\cal S}_\alpha$, and 
there is similarly a homeomorphism $\Lambda_g$.
Then 
\[\psi_\alpha=\Lambda_f^{-1}\circ\Lambda_g\]
(read from right to left) 
is a map with the properties stated in the proposition.
This concludes the proof of the proposition  
in the case that the 
component $C(A)$ of the identity of the 
centralizer of the 
monodromy $A$ of $\alpha$ acts transitively on the fibres of 
${\cal S}_\alpha\to \alpha$.

The general case is similar. By the
assumption $G\not=SO(2k,1)$, 
the dimension $m-1$ of the 
fibre of the sphere bundle ${\cal S}_\alpha$ is odd.
The group $C(A)$ can be described
as the group of all isometries of a fixed fibre of $S^{m-1}$ 
which preserve the generalized eigenspaces 
of the monodromy 
$A$ (and the complex structure for $G=SU(n,1)$ or the
quaternionic structure for $G=Sp(n,1)$). 
As $m-1$ 
is odd, $C(A)$ contains the element
$-{\rm Id}$.

For $v\in {\cal S}_\alpha$ 
the orbit $C(A)(v)$ 
of $v$ under the group $C(A)$ (which is  viewed  
as a group of isometries of the fibre
of ${\cal S }_\alpha$ containing $v$) is a
smooth submanifold
of $S^{m-1}$ which contains with $w$ the antipode $-w$. 
This submanifold is preserved by the monodromy
$A$ of $\alpha$. Thus if $v\in {\cal S}_\alpha$ is a 
vector 
with footpoint $\alpha(0)$ (for a parametrization 
of $\alpha$ by arc length as before) then 
\[{\cal C}=\cup_{t\in [0,\ell]}\Vert_{\alpha[0,t]}C(A)(v)\]
is a fibre bundle 
over $\alpha$ with smooth fibre which is invariant
under the antipodal bundle involution. 

Multiply the natural volume form $\omega_{\cal C}$ on 
${\cal C}$ with the functions $f,g$.
As ${\cal C}$ is invariant under the fibrewise antipodal map
and under parallel transport, 
the total integrals of $f$ and $g$ on ${\cal C}$ 
coincide. Since $C(A)$ acts transitively on the fibres
of ${\cal C}$, Lemma \ref{rotation4} shows that
the restrictions 
to ${\cal C}$ of the 
functions $f,g$ have values in an  
interval of the form 
$[c(1-e^{-\kappa R}/\kappa),c(1-e^{-\kappa R}/\kappa)^{-1}]$
for some $c>0$.
As a consequence, the argument for the case that
the action of $C(A)$ on the fibres of ${\cal S}_\alpha$ is 
transitive can be applied to the manifold ${\cal C}$
and yields a homeomorphism of ${\cal C}$ with the properties
required in the lemma (where we may have to adjust the constant
$\theta$ to take into account the various geometries of the
manifolds $C(A)(v)$).

Now the orbits of $C(A)$ form a compact family of manifolds, and the
functions $f,g$ are globally defined and continuous.
Thus 
carrying out this construction separately on each 
of the fibre bundles constructed from the orbits of 
$C(A)$ yields a 
homeomorphism $\psi_\alpha$ of ${\cal S}_\alpha$ 
as claimed.
\end{proof}

%We briefly explain the difficulty in the case
%$G=SO(4,1)$. Here the monodromy of a periodic geodesic
%$\gamma$ can be viewed as an element in $SO(3)$.
%Such an element acts on the two-sphere $S^2$ by a %ro%tation
%with a fixed axis $B$. Denote by $C\subset S^2$ the invariant
%great circle. 

%The density with respect to the Lebesgue
%measure of the measure constructed in the beginning
%of the section is roughly invariant under the 
%rotation about $B$. But if the monodromy of $\gamma$ is close to 
%the boundary of the allowed region, then the density 
%near the circle $C$ may be much smaller than the density
%near the poles. Now if say the upper hemisphere gets a
%bit more weight than the lower hemisphere, it may not
%be possible to push enough weight across $C$ with a map which 
%moves points a distance which is exponentially small.
%\end{remark}

For $x\in {\rm supp}(\mu)$ let as before $P(x)$
be the geometric skew-pants defined by $x$.
We have (compare \cite{KM12} for the case 
$\tilde M={\bf H}^3)$

\begin{lemma}\label{wellattachedconclude}
For sufficiently small $\delta<\delta_4$ as in the definition of 
the measure $\mu$, the following holds true.
Let 
$x,y\in {\rm supp}(\mu)$; if ${\cal O}(x)\in {\cal S}_\alpha$ 
and if
${\cal O}(y)=(\rho_\alpha\circ B_\alpha\circ \psi_\alpha)(x)$ where
$\psi_\alpha$ is as in Proposition \ref{matching}, 
then $\hat P(y)$ is well attached to $\hat P(x)$ along $\alpha$. 
\end{lemma}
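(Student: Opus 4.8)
The plan is to unwind both sides of the desired conclusion into the language of Definition~\ref{wellattachedpants} and check its three bullet points using the construction of $\mathcal{O}$ together with the estimates in Proposition~\ref{matching} and the well-connectedness of the tripods. First I would fix $x=(x_0,y_0,F_1,F_2,F_3)\in\mathrm{supp}(\mu)$ with $\mathcal{O}(x)\in\mathcal{S}_\alpha$; recall that by construction $\mathcal{O}(x)=\eta'$, where $\eta$ is the shortest geodesic arc in the prescribed homotopy class from the boundary geodesic $\alpha$ of $\hat P(x)$ to the auxiliary arc $\beta(x)$, so the footpoint of $\mathcal{O}(x)$ is precisely an endpoint on $\alpha$ of a seam of $\hat P(x)$ (up to the $\kappa_4e^{-R}$-error coming from replacing boundary geodesics by the long sides of the totally geodesic hexagons). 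Similarly $\mathcal{O}(y)$ with $y\in\mathrm{supp}(\mu)$ and $\mathcal{O}(y)=(\rho_\alpha\circ B_\alpha\circ\psi_\alpha)(x)$ is an endpoint on $\alpha^{-1}$ of a seam of $\hat P(y)$. The first bullet of Definition~\ref{wellattachedpants} — that $\alpha$ receives opposite orientations from $\hat P(x)$ and $\hat P(y)$ — is built into the fact that $\rho_\alpha$ sends $\mathcal{S}_\alpha$ to $\mathcal{S}_{\alpha^{-1}}$, i.e.\ $\hat P(y)$ carries $\alpha$ with the reversed orientation; this is immediate.

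Next I would verify the second bullet, on the oriented distance between the two seam feet. Applying $B_\alpha=B_\alpha^1$ translates the footpoint along $\alpha$ by exactly $1$; applying $\rho_\alpha$ keeps the footpoint fixed (it only reverses the fibre vector and the orientation of $\alpha$); and applying $\psi_\alpha$ moves the footpoint by at most $\theta e^{-\kappa R}$ by the displacement bound in Proposition~\ref{matching}. Hence the footpoint of $\mathcal{O}(y)$ is at oriented distance within $[1-\theta e^{-\kappa R},\,1+\theta e^{-\kappa R}]$ of the footpoint of $\mathcal{O}(x)$ along $\alpha$. Choosing $R$ large enough (equivalently, by the running hypothesis $e^{-\kappa R}/\kappa<R^{-b}$ and by decreasing $\delta$) makes $\theta e^{-\kappa R}<\sigma$ for any prescribed tolerance $\sigma$; this gives bullet two with $\sigma$-parameter $\theta e^{-\kappa R}$, which is what is needed to feed Proposition~\ref{incompressible} (with $\sigma=R^{-b}$, say). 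One small point to handle carefully: $\mathcal{O}(x)$ is, strictly speaking, the foot of the shortest \emph{connecting} arc between $\alpha$ and $\beta(x)$ rather than literally a seam endpoint of the skew-pants; but by Lemma~\ref{midpoint}, comparison, and the $\kappa_4e^{-R}$-closeness of $\hat P(x)$ to its totally geodesic model, this foot differs from an actual seam endpoint of $\hat P(x)$ by at most a further exponentially small amount in $R$, which is absorbed into the same estimate.

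Finally, for the third bullet I would compare the \emph{directions} of the two seams at their respective feet. The direction of $\mathcal{O}(x)$ is the fibre component of $\eta'$, i.e.\ a unit vector in $(\alpha')^\perp_{\mathbb{K}}$, and the direction of $\mathcal{O}(y)$ is obtained from it by: parallel transport of distance $1$ along $\alpha$ (the action of $B_\alpha$ on the fibre vector), followed by negation together with the change of orientation $\alpha\mapsto\alpha^{-1}$ (the map $\rho_\alpha$), followed by the fibrewise homeomorphism $\psi_\alpha$, which by Proposition~\ref{matching} moves each point of $\mathcal{S}_\alpha$ a distance at most $\theta e^{-\kappa R}$ and hence changes the fibre direction by an angle $\le\theta e^{-\kappa R}$. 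Thus the direction of the seam of $\hat P(y)$ at its foot is, up to an error of $\theta e^{-\kappa R}$ in angle, the image of the negative of the direction of the seam of $\hat P(x)$ under parallel transport along the oriented subarc of $\alpha$ of length $\approx 1$ connecting the two feet — which is exactly the content of the third bullet of Definition~\ref{wellattachedpants}. Again the discrepancy between $\mathcal{O}(x)$'s direction and the true seam direction of $\hat P(x)$ is exponentially small in $R$ by Lemma~\ref{midpoint} and comparison, and the fact that seam directions lie exponentially close to $(\alpha')^\perp_{\mathbb{K}}$ (noted after Definition~\ref{wellattachedpants}) means all these vectors live in essentially the same fibre, so the angles add up as claimed. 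Collecting the three estimates, all three bullets of Definition~\ref{wellattachedpants} hold with parameter $C e^{-\kappa R}$ for a uniform constant $C$; shrinking $\delta$ and enlarging $R$ as permitted makes this $<R^{-b}$, completing the proof. The main obstacle, and the place where the real work sits, is controlling the cumulative effect of the several exponentially small perturbations — the $\kappa_4e^{-R}$-error relating skew-pants to their totally geodesic hexagon models, the angle defect of seams relative to $(\alpha')^\perp_{\mathbb{K}}$, and the $\theta e^{-\kappa R}$ displacement of $\psi_\alpha$ — and checking that they genuinely combine additively in the relevant fibre bundles $\mathcal{S}_\alpha$ rather than interacting in some uncontrolled way; this is exactly where the ${\rm CAT}(-1/2)$/comparison estimates from Sections~3 and~4 and the uniform continuity of parallel transport are invoked.
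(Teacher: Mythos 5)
Your proof verifies the three bullets of Definition~\ref{wellattachedpants} only for the \emph{one} pair of seam endpoints directly visible to the map $\mathcal{O}$: the footpoint of $\mathcal{O}(x)=\eta'$, where $\eta$ is the shortest arc from $\alpha$ to the auxiliary arc $\beta(x)$, lies near an endpoint of exactly one of the two seams of $\hat P(x)$ that end on $\alpha$ (the seam going toward the boundary component approximated by $\beta(x)$), and the same is true for $\mathcal{O}(y)$ and $\hat P(y)$. Definition~\ref{wellattachedpants} must be read universally (``let $x$ be an endpoint of a seam of $P$ on $\beta$'' means \emph{any} such endpoint): this is forced by the fact that Proposition~\ref{incompressible} needs the resulting pants decomposition to be $(R',\sigma)$-tight, and the shear of a pants curve is defined as a \emph{pair} of distances, both of which must lie in $[1-\sigma,1+\sigma]$. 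Your argument says nothing about the second seam endpoint of $\hat P(x)$ on $\alpha$, nor about the second seam endpoint of $\hat P(y)$, so it does not establish the well-attached condition.

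The missing idea is the symmetry of a skew-pants along a cuff. In the paper's proof one picks a lift $\tilde\alpha$, writes $\alpha$ as the quotient of $\tilde\alpha$ by a loxodromic $\Lambda\in G$, and, using Proposition~\ref{boundaryskew} to know that the monodromy is close to the identity, extracts the unique square root $\Lambda^{1/2}$ whose rotational part is also close to the identity. One then shows that the two seam directions $v_1,v_2$ of any $(R,\delta)$-skew pants on $\alpha$ satisfy $d_{\mathcal{S}_\alpha}(\Lambda^{1/2}v_1,v_2)\le\rho e^{-\zeta R}$; the proof of this uses the hexagon decomposition of a geometric skew-pants and the fact that the two right-angled hexagons cut out by the seams are isometric up to an exponentially small error. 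Because the same $\Lambda^{1/2}$-symmetry applies to $\hat P(x)$ and $\hat P(y)$, and $\Lambda^{1/2}$ commutes with parallel transport along $\alpha$ and is compatible with $\rho_\alpha$, the well-attachment of the first pair of seams propagates to the second pair. This is the step that carries the real content of the lemma; the bookkeeping of exponentially small errors that you flag as the obstacle is, comparatively, routine. You would need to add this $\Lambda^{1/2}$ argument (or an equivalent one) before the lemma is actually proved.
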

\begin{proof}
Let $\tilde\alpha$ be a lift of $\alpha$ to $\tilde M$.
Then $\alpha$ is the quotient of $\tilde \alpha$
by a loxodromic isometry $\Lambda\in G$. The translation length
of $\Lambda$ equals the length of $\alpha$. 
The rotational part of $\Lambda$ is the monodromy
$A\in SO(n-1)$ (or $A\in SU(n-1)$ or 
$A\in Sp(n-1)$) of $\alpha$. 

By Proposition \ref{boundaryskew}, for a number
$\epsilon <\pi/4$ depending on $\delta$,  
the monodromy $A$ of $\alpha$ is $\epsilon$-close to the identity.
In particular, 
there is a unique root $\Lambda^{1/2}$ of $\Lambda$
whose rotational part is $\epsilon$-close to the identity.
The map $\Lambda^{1/2}$ acts as an involution on the 
bundle ${\cal S}_\alpha$. 

Each skew-pants $P\in {\cal P}(R,\delta)$ which
contains $\alpha$ in its boundary has two seams
$\beta_1,\beta_2$ 
with endpoints on $\alpha$. 
Let $v_i$ be the unit tangent vector of $\beta_i$
on $\alpha$ $(i=1,2)$. We claim that
the distance in ${\cal S}_\alpha$ between 
$\Lambda^{1/2}(v_1)$ and 
$v_2$ 
%(here as before, 
%$\gamma$ is the boundary component of $P(x)$ such that
%${\cal O}(x)\in {\cal S}_\gamma$)  
is at most $\rho e^{-\zeta R}$ where 
$\rho>0,\zeta>0$ are universal constants.

To this end note that the seams $\beta_1,\beta_2,\beta_3$ 
of $P$ decompose $P$ into 
two right angled hexagons $H_1,H_2$ with geodesic sides in 
$M$. Equip $P$ with the structure of 
a geometric skew pants. For this geometric structure 
there are numbers $\delta_1<\delta,\delta_2<\delta,\delta_3<\delta$,
and there are approximate 
$\delta_1,\delta_2,\delta_3$-twisted ruled bands $B_1,B_2,B_3$ of size roughly
$6R-2\tau$ which are (locally) embedded in $P$
(see Section \ref{skewpants}). The bands $B_1,B_2,B_3$  
are separated by the seams $\beta_1,\beta_2,\beta_3$ of $P$ into
half-bands $B_i^j$ $(i=1,2,3,j=1,2)$.  
Up to an error which is exponentially small in $R$, 
the hexagon $H_j$ is composed of $B_1^j,B_2^j,B_3^j$ and 
a triangle which is exponentially 
close to an equilateral triangle of side length
$2r$  in a totally geodesic immersed hyperbolic 
plane in $M$.
The pairs of twisting angles of the half-bands $B_i^1,B_j^2$  
contained in $H_1,H_2$
coincide. But this means that the hexagons $H_1,H_2$ are
isometric up to an error which is exponentially small in $R$.

Now if $x\in {\rm supp}(\mu)$, if $P(x)=P$ 
and if ${\cal O}(x)\in {\cal S}_\alpha$ then 
up to exchanging $v_1$ and $v_2$, 
the vector ${\cal O}(x)$ is 
at distance at most a constant times
$e^{-\kappa R}/\kappa$ from $v_1$.
In particular, if $x,y$ are as in the lemma,
then the approximate twisted ruled bands of $P(x),P(y)$ 
which pass through the foot points of 
${\cal O}(x),{\cal O}(y)$ 
are well attached along a subarc of $\gamma$.

By the above, the second pair of approximate
twisted ruled band of $P(x),P(y)$ is well attached along
a subarc of $\gamma$ as well. Together this completes
the proof of the lemma.
\end{proof}

Each point $z\in {\rm supp}(\mu)$ 
defines a 
skew-pants $\hat P(z)\in {\cal P}(R,\delta)$.
The set ${\cal P}(R,\delta)$ is finite.
For $P\in {\cal P}(R,\delta)$ define
\[h(P)=\mu\{z\mid \hat P(z)=P\}.\]
Then $P\mapsto h(P)$ is a non-negative
weight function on the set ${\cal P}$ of all skew pants.
This weight function is invariant 
under the involution ${\cal J}$ of 
${\cal P}(R,\delta)$ which reverses the orientation.

For $P\in {\cal P}(R,\delta)$ 
let $\chi_P:{\rm supp}(\mu)\to [0,1]$ be the function defined by
$\chi_P(z)=1$ if the skew-pants $P(z)$ defined by $x$ equals $P$, and let
$\chi_P(z)=0$ otherwise.  Then we have 
\[h(P)=\int \chi_Pd\mu.\]
If $\gamma$ is a cuff of $P$ then 
the weighted measure $\chi_P\mu$ projects via the map ${\cal O}$
to a 
weighted measure $\chi_{P,\gamma}\mu_\gamma$ on ${\cal S}_\gamma$.
Since the measure $\mu$ is invariant under the map
which exchanges the two tripods in a point in 
${\cal F\cal T}^2\times {\cal F}^3$ and permutes the 
frames in ${\cal F}^3$, the total mass of the 
measure 
\[\chi_{P,\gamma}\mu_\gamma\] 
does not depend on the choice of the 
boundary geodesic $\gamma$ of $P$.

Let $\psi_\gamma:{\cal S}_\gamma\to {\cal S}_\gamma$ 
be as in Proposition \ref{matching}.
We may assume that 
$\psi_{\gamma^{-1}}=\psi_\gamma^{-1}$ where we identify
${\cal S}_{\gamma}$ with ${\cal S}_{\gamma^{-1}}$ 
with the obvious homeomorphism.

Define
\[h(P,P^\prime)=\int 
\chi_{P^\prime,\gamma^{-1}}(\rho_\gamma\circ B_\gamma\circ \psi_\gamma(x))
\chi_{P,\gamma}(x)d\mu_\gamma(x).\]
By construction and Lemma \ref{wellattachedconclude}, 
if $h(P,P^\prime)>0$ then the pants $P,P^\prime$ are 
well attached along $\gamma$. In particular, 
$P,P^\prime$ define an edge in the graph
${\cal G}(R,\delta)$.

The function $h(P,P^\prime)$ can be viewed as a   
non-negative weight
function on the edges of ${\cal G}(P,\delta)$.
Since $\psi_{\gamma^{-1}}=\psi_\gamma^{-1}$, 
by Proposition \ref{matching} 
this weight
function is symmetric: We have
\[h(P,P^\prime)=h(P^\prime,P)\]
for all $P,P^\prime$. Moreover, clearly
\[\sum_{P^\prime}h(P,P^\prime)=h(P)\]
which is equivalent to stating that this weight function is
admissible.

Theorem \ref{mainresult} is now a consequence
of Proposition \ref{solution} and Lemma \ref{solution1}.

\section{Concluding remarks}\label{even}

The proof of Proposition \ref{matching} is the only part of the
argument which 
is not valid for the groups $G=SO(2m,1)$ for $m\geq 2$
(with $SO(2,1)$ not relevant for the purpose of this work,
see however \cite{KM11}).

Namely, if $G=SO(2m,1)$ for some $m\geq 2$ then 
the monodromy of any closed geodesic 
$\alpha$ has a fixed unit vector.
If the eigenspace for the monodromy transformation
with respect to the eigenvalue one is 
bigger than one then the component of the identity of the 
subgroup of $SO(2m-1)$ of all elements
which commute with the monodromy transformation contains 
$-{\rm Id}$. In this case 
the argument in the proof of 
Proposition \ref{matching} is valid. 
However, if the dimension of 
the eigenspace 
for the eigenvalue one 
equals one then we can not use this argument.
Call such a periodic geodesic 
$\alpha$ with this property \emph{generic}.

For a generic closed geodesic 
$\alpha$, the bundle ${\cal S}_\alpha$
contains a sphere subbundle $\Sigma_\alpha$ 
whose fibre is a sphere of dimension $2m-3$. 
It is the sphere subbundle of the orthogonal complement of the 
one-dimensional eigenspace of the monodromy 
transformation for the eigenvalue one.
This sphere subbundle is invariant under parallel transport.

Choose a parametrization of $\alpha$ and invariant 
orientations of ${\cal S}_\alpha,\Sigma_\alpha$. 
For all $t$ and all $s\in [-\pi/2,\pi/2]$ 
the set $\Sigma_\alpha^s(t)$ of vectors in the
fibre ${\cal S}_\alpha(t)$ whose oriented 
distance to $\Sigma_\alpha(t)=\Sigma_\alpha^0(t)$ 
equals $s$ defines a
decomposition of ${\cal S}_\alpha(t)$ which is parametrized
on $[-\pi/2,\pi/2]$. For each $s$ the set 
$\cup_t\Sigma_\alpha^s(t)$ is 
invariant under parallel transport. 
In the glueing construction, we have to match a point 
in $\cup_t\Sigma^s_\alpha(t)$ with a point which is 
exponentially close to $\cup_t\Sigma^{-s}_\alpha(t)$.
Now if the measure of $\cup_t\cup_{s\leq 0}\Sigma^s_\alpha(t)$
is bigger than the measure of $\cup_t\cup_{s\geq 0}\Sigma^s_\alpha(t)$ 
and if the measure of a small neighborhood of 
$\cup_t\Sigma^0_\alpha(t)$ is exponentially small, then 
we can not match pants as required 
in the condition for incompressibility.

In spite of this difficulty, we believe that 
Theorem \ref{mainresult} holds true for even dimensional
hyperbolic manifolds. We also conjecture that it
is true for closed locally symmetric manifolds of the form
$M=\Gamma\backslash G/K$ where $G$ is a semisimple Lie group
with finite center, without compact factors and without factors
locally isomorphic to $SL(2,\mathbb{R})$, and where
$\Gamma<G$ is a cocompact irreducible lattice. 

The Kahn-Markovic argument does not seem to generalize
in an easy way to rank one locally symmetric manifolds
of finite volume. However, Theorem \ref{mainresult} is known
for non-compact finite volume hyperbolic 3-manifolds. 
We refer to \cite{BC14} for
a recent proof and references to related ealier results.

\bigskip
\noindent
MATHEMATISCHES INSTITUT DER UNIVERSIT\"AT BONN\\
ENDENICHER ALLEE 60\\
53115 BONN, GERMANY

\bigskip
\noindent
e-mail: ursula@math.uni-bonn.de

\end{document}